\documentclass[a4paper,12pt,reqno]{amsart}

\usepackage[english]{babel}
\usepackage[utf8]{inputenc}

\usepackage[a4paper,top=2cm,bottom=2cm,left=2cm,right=2cm]{geometry}


\usepackage[title]{appendix}
\usepackage{titlesec}
\usepackage{lipsum}
\usepackage{enumerate}
\usepackage{enumitem}
\usepackage{mathrsfs}
\usepackage{amsmath,amssymb,amsthm}
\usepackage{mathtools}
\usepackage{graphicx,float,subfigure}
\usepackage{url}
\usepackage{bbold}
\usepackage{color}
\usepackage{xcolor}

\usepackage[all,cmtip]{xy}
\usepackage{multirow}
\usepackage{makecell}
\usepackage[colorinlistoftodos]{todonotes}
\usepackage[colorlinks=true, allcolors=blue]{hyperref}
\usepackage{harpoon}
\usepackage{MnSymbol}
\usepackage{cases}
\allowdisplaybreaks[4]

\DeclareMathOperator{\sgn}{sgn}
\DeclareMathOperator{\wt}{wt}

\theoremstyle{plain}
\newtheorem{theorem}{\scshape Theorem}[section]
\newtheorem{proposition}[theorem]{\scshape Proposition}
\newtheorem{lemma}[theorem]{\scshape Lemma}
\newtheorem{corollary}[theorem]{\scshape Corollary}
\newtheorem{conjecture}[theorem]{\scshape Conjecture}

\newtheorem*{assumption*}{\scshape Assumption}

\newtheorem*{claim*}{Claim}

\theoremstyle{definition}

\newtheorem{definition}[theorem]{\scshape Definition}
\newtheorem{remark}[theorem]{\scshape Remark}

\newtheorem{problem}[theorem]{\scshape Problem}

\newcommand{\floor}[1]{\lfloor #1 \rfloor}
\newcommand{\M}{\operatorname{M}}
\newcommand{\pf}{\mathsf{pf}}
\renewcommand{\det}{\mathsf{det}}
\renewcommand{\wt}{\mathsf{wt}}

\numberwithin{equation}{section}

\titleformat{\section}{\centering\bfseries}{\thesection}{1em}{\MakeUppercase}
\titleformat{\subsection}{\bfseries}{\thesubsection}{1em}{}

\setlength{\parskip}{1mm}

\begin{document}
\title{Off-diagonally symmetric domino tilings of the Aztec diamond}
\author{Yi-Lin Lee}
\address{Department of Mathematics, Indiana University, Bloomington, Indiana 47405}
\email{yillee@iu.edu}
\subjclass{05A15, 05B20, 05B45}
\keywords{Aztec diamonds, domino tilings, method of non-intersecting lattice paths, Pfaffians, symmetry classes.}

\maketitle
\begin{abstract}
We introduce a new symmetry class of domino tilings of the Aztec diamond, called the off-diagonal symmetry class, which is motivated by the off-diagonally symmetric alternating sign matrices introduced by Kuperberg in 2002. We use the method of non-intersecting lattice paths and a modification of Stembridge's Pfaffian formula for families of non-intersecting lattice paths to enumerate our new symmetry class. The number of off-diagonally symmetric domino tilings of the Aztec diamond can be expressed as a Pfaffian of a matrix whose entries satisfy a nice and simple recurrence relation.
\end{abstract}

\section{Introduction}

\subsection{Symmetry classes}\label{sec.symclass}

Consider a finite group $G$ acting on a set of combinatorial objects $X$. Let $H$ be a subgroup of $G$. A \textit{symmetry class} is a collection of $H$-invariant objects of $X$. In enumerative combinatorics, it is quite challenging to enumerate each symmetry class of $X$ because the structure of each class varies with different subgroups $H$. These usually require different methods to enumerate them.

The study of symmetry classes of plane partitions goes back to MacMahon \cite{Mac1899}, but gained more attention in the 1970s and 80s. Stanley \cite{Stan86pp} identified ten symmetry classes of plane partitions, all the symmetry classes can be enumerated by nice product formulas. We refer the interested reader to the survey paper by Krattenthaler \cite[Section 6]{Kra15} for a modern update to Stanley's paper.

An alternating sign matrix (ASM) of order $n$ is an $n \times n$ matrix with entries $0, 1$ or $-1$ such that all row and column sums are equal to $1$ and the non-zero entries alternate in sign in each row and column. They were introduced by Mills, Robbins and Rumsey \cite{MRR83} in the early 1980s. The symmetry classes of ASMs under the action of the dihedral group of order $8$ were proposed and summarized by Stanley \cite{Stan86} and Robbins \cite{Rob91, Rob00}. There are eight symmetry classes of ASMs; five of them were fully solved, one was partially solved, while for the remaining two there are no known or conjectured formulas. We refer the interested reader to the detailed account written by Behrend, Fischer and Konvalinka \cite[Section 1.2]{BFK17}.

In the 1990s, Kuperberg's seminal paper \cite{Kup96} brought the statistical mechanical six-vertex model into the study of ASMs. Later, in \cite{Kup02}, he successfully provided a unified framework using the six-vertex model to solve some of the symmetry classes of ASMs. He also introduced several new types of ASMs, such as, off-diagonally symmetric, vertically and horizontally perverse, with U-turn sides, and combined them with the original eight symmetry classes. The enumerative results of these new types of ASMs were summarized in \cite[Section 1.2]{BFK17}.

The \textit{Aztec diamond} of order $n$, denoted by $AD(n)$, is the union of all unit squares in the region $|x|+|y| \leq n+1$, which was introduced by Elkies, Larsen, Kuperberg and Propp \cite{ELKP1,ELKP2} in the early 1990s. A domino tiling of the Aztec diamond is a covering of $AD(n)$ using dominoes without gaps or overlaps. The symmetry classes of domino tilings of $AD(n)$ under the action of the dihedral group of order $8$ have been discussed by Ciucu \cite[Section 7]{Ciucu97} and Yang \cite{Yang91}. There are five symmetry classes; the enumerations of three of them have been solved, while the other two remain open (there are no known or conjectured formulas for them).

Let $r$ and $t$ be the generators corresponding to a rotation by $90$ degrees and a reflection across the vertical diagonal of $AD(n)$, respectively. Let $G= \langle r,t | r^4 = t^2 = (tr)^2 = \mathsf{id}\rangle$ be the dihedral group of order $8$ and $H$ be a subgroup of $G$. Let $\M_H(n)$ be the number of $H$-invariant domino tilings of $AD(n)$. These five symmetry classes are listed in Table \ref{tab.5sym}. We point out that, as mentioned in \cite[Section 7]{Ciucu97}, the first few terms of $\M_{H_4}(n)$ and $\M_{H_5}(n)$ do not all factor into small primes, so a simple product formula seems unlikely in these two unsolved symmetry classes.
\begin{table}[htb!]
\centering
\begin{tabular}{l|l|l}
  Subgroup of $G$ & Symmetry Class & Size and Reference \\
  \hline
  $H_1=\{\mathsf{id}\}$ & Original Aztec diamond & $\M_{H_1}(n)=2^{n(n+1)/2}$ (\cite{ELKP1,ELKP2}). \\
  \hline
  $H_2= \langle r \rangle$ & Quarter-turn invariant & $\M_{H_2}(n)$ is a product formula (\cite{Yang91}, \cite{Ciucu97}). \\
  \hline
  $H_3= \langle r^2 \rangle$ & Half-turn invariant & $\M_{H_3}(n)$ is a product formula (\cite{Yang91}, \cite{Ciucu97}).  \\
  \hline
  $H_4= \langle t \rangle$ & Diagonally symmetric & $\M_{H_4}(n)=2,6,24,132,1048,11960,190912,...$ \\
  \hline
  $H_5= \langle r^2,t \rangle$ & \makecell[l]{Diagonally and \\ anti-diagonally symmetric} & $\M_{H_5}(n)=2,4,10,28,96,384,1848,10432,...$ \\
\end{tabular}
\vspace{0.3cm}
\caption{The five symmetry classes of domino tilings of $AD(n)$.}\label{tab.5sym}
\end{table}

\subsection{Connection between ASMs and Aztec diamonds}\label{sec.ASMAD}

The connection between ASMs and Aztec diamonds was first mentioned in \cite{ELKP1,ELKP2}; the number of domino tilings of the Aztec diamond can be expressed as a weighted enumerations of ASMs depending on the number of $1$'s or $-1$'s in a matrix. This connection was made explicit by Ciucu \cite[Section 2]{Ciucu96}, the idea is stated below.

Given the Aztec diamond of order $n$, we consider the checkerboard coloring of the square lattice with the unit squares along its top right side colored black. Rotate the Aztec Diamond clockwise by $45$ degrees, black unit squares are now on the left and right sides. A \textit{cell} is a $2 \times 2$ square with left and right unit squares colored black while top and bottom colored white. We can think of $AD(n)$ as containing $n^2$ cells\footnote{This point of view is more clear if one works in the equivalent language of perfect matchings on the planar dual graph of $AD(n)$; see \cite{Ciucu96}.}. Figure \ref{fig.ADcell} shows an example when $n=4$, the four cells in the first row are displayed in red dotted edges.

Given a domino tiling of $AD(n)$, if we assign an entry $1, 0$ or $-1$ to each such cell where a cell contains $2, 1$ or $0$ complete domino(es), respectively, then we get a correspondence between a domino tiling of $AD(n)$ and an $n \times n$ ASM. Figures \ref{fig.ADtiling} and \ref{fig.ASM} illustrate an example of this correspondence. In Figure \ref{fig.ADtiling}, from left to right, three cells drawn in red dotted edges contain $1,0$ and $2$ complete domino(es), and therefore are assigned $0,-1$ and $1$, respectively. The corresponding ASM is given in Figure \ref{fig.ASM}.
\begin{figure}[htb!]
    \centering
    \subfigure[]
    {\label{fig.ADcell}\includegraphics[width=0.3\textwidth, trim=70 70 70 70]{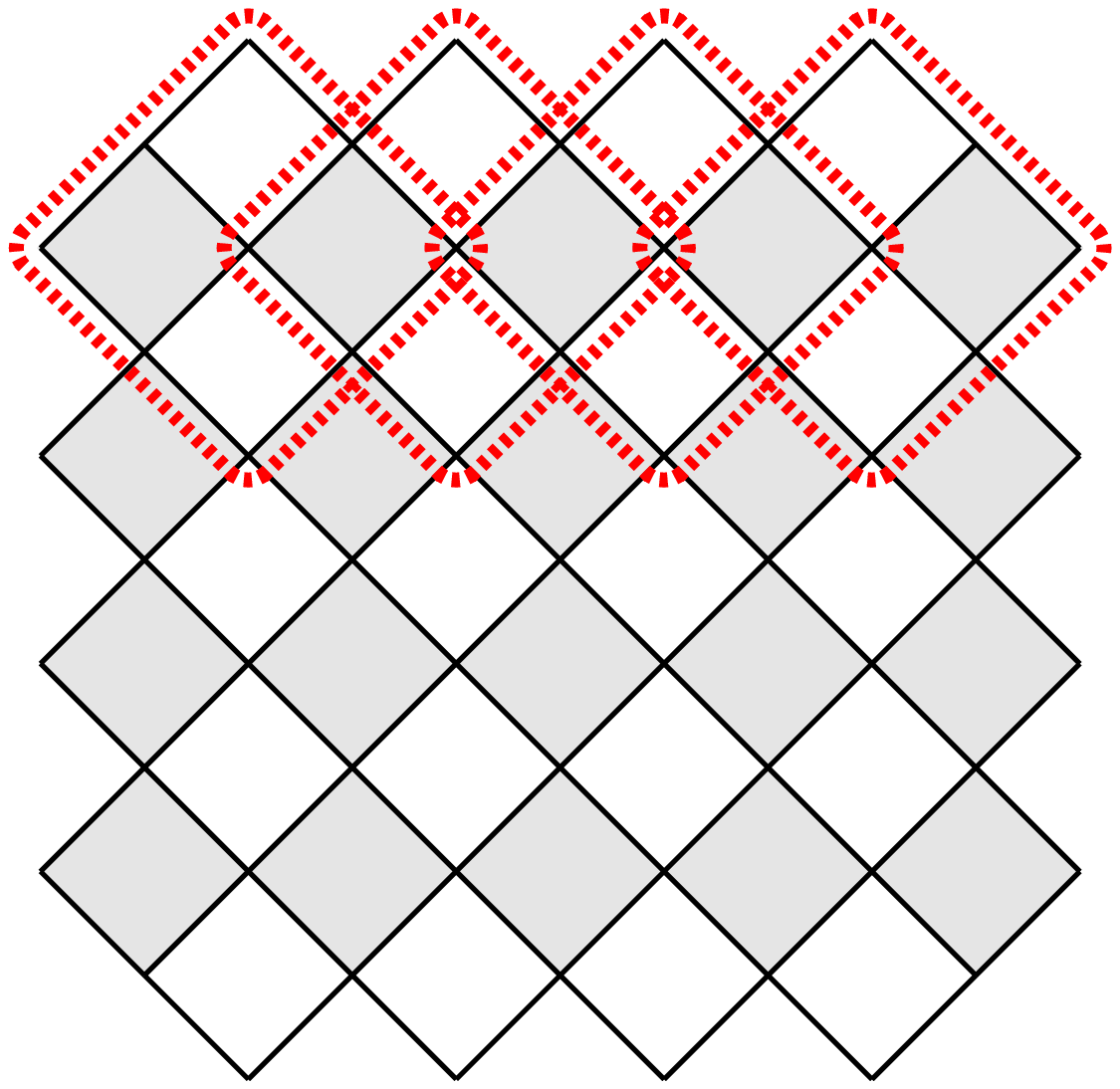}}
    \hspace{3mm}
    \subfigure[]
    {\label{fig.ADtiling}\includegraphics[width=0.3\textwidth, trim=70 70 70 70]{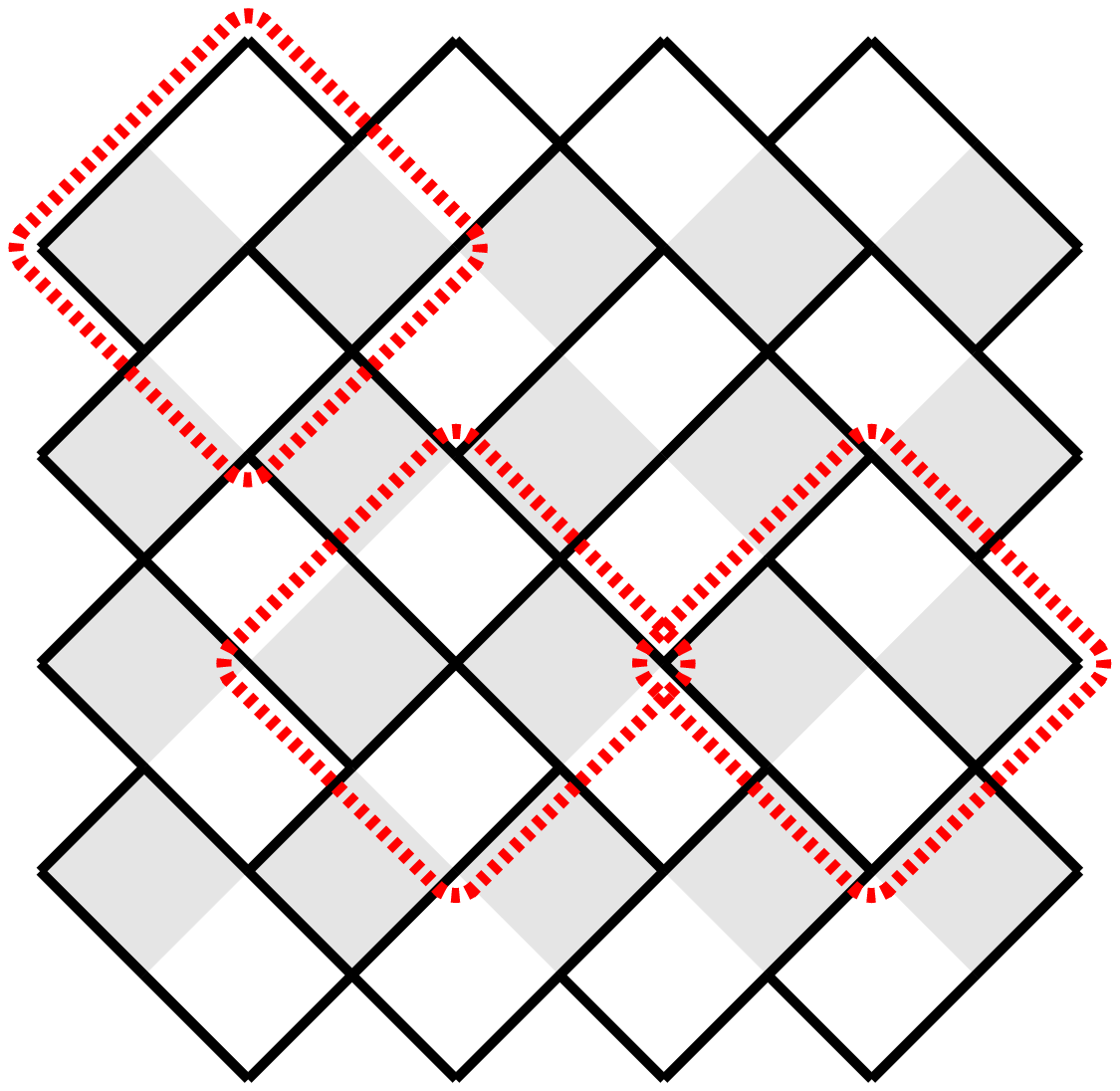}}
    \hspace{8mm}
    \subfigure[]
    {\label{fig.ASM}\raisebox{1.26cm}{\includegraphics[width=0.2\textwidth]{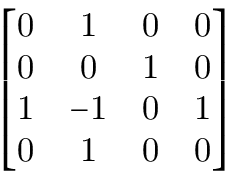}}}

    \caption{(a) The Aztec Diamond of order $4$ with top row cells drawn in red dotted edges. (b) A domino tiling of $AD(4)$. From left to right, the three cells drawn in red dotted edges are assigned $0,-1$ and $1$, respectively. (c) The corresponding ASM from the domino tiling in Figure \ref{fig.ADtiling}.}
\end{figure}

In \cite{Kup02}, Kuperberg introduced the \textit{off-diagonally symmetric} ASMs, which are the $2n \times 2n$ diagonally symmetric ASMs whose diagonal entries are all zeros. Motivated by this, using the correspondence discussed in the previous paragraph, we consider an analogue class of domino tilings of Aztec diamonds. For convenience, we do not perform a $45$-degree rotation on the Aztec diamond.
\begin{definition}\label{def.off}
  A domino tiling of $AD(n)$ is called \textit{off-diagonally symmetric} if
  \begin{itemize}
    \item the tiling is symmetric about the vertical diagonal of $AD(n)$, and
    \item the $n$ cells along the vertical diagonal are assigned $0$ in the above correspondence. In other words, each such cell contains exactly one complete domino.
  \end{itemize}
\end{definition}

An example of a diagonally symmetric domino tiling is shown in Figure \ref{fig.diagtiling}. From top to bottom, the six cells (drawn in red dotted edges) along the vertical diagonal are assigned $0,0,-1,1,0$ and $0$, respectively. On the other hand, Figure \ref{fig.offdiagtiling} gives an off-diagonally symmetric domino tiling; one can easily check that the six cells (drawn in red dotted edges) along the vertical diagonal all contain exactly one complete domino, and thus they are all assigned $0$.
\begin{figure}[htb!]
    \centering
    \subfigure[]
    {\label{fig.diagtiling}\includegraphics[width=0.4\textwidth]{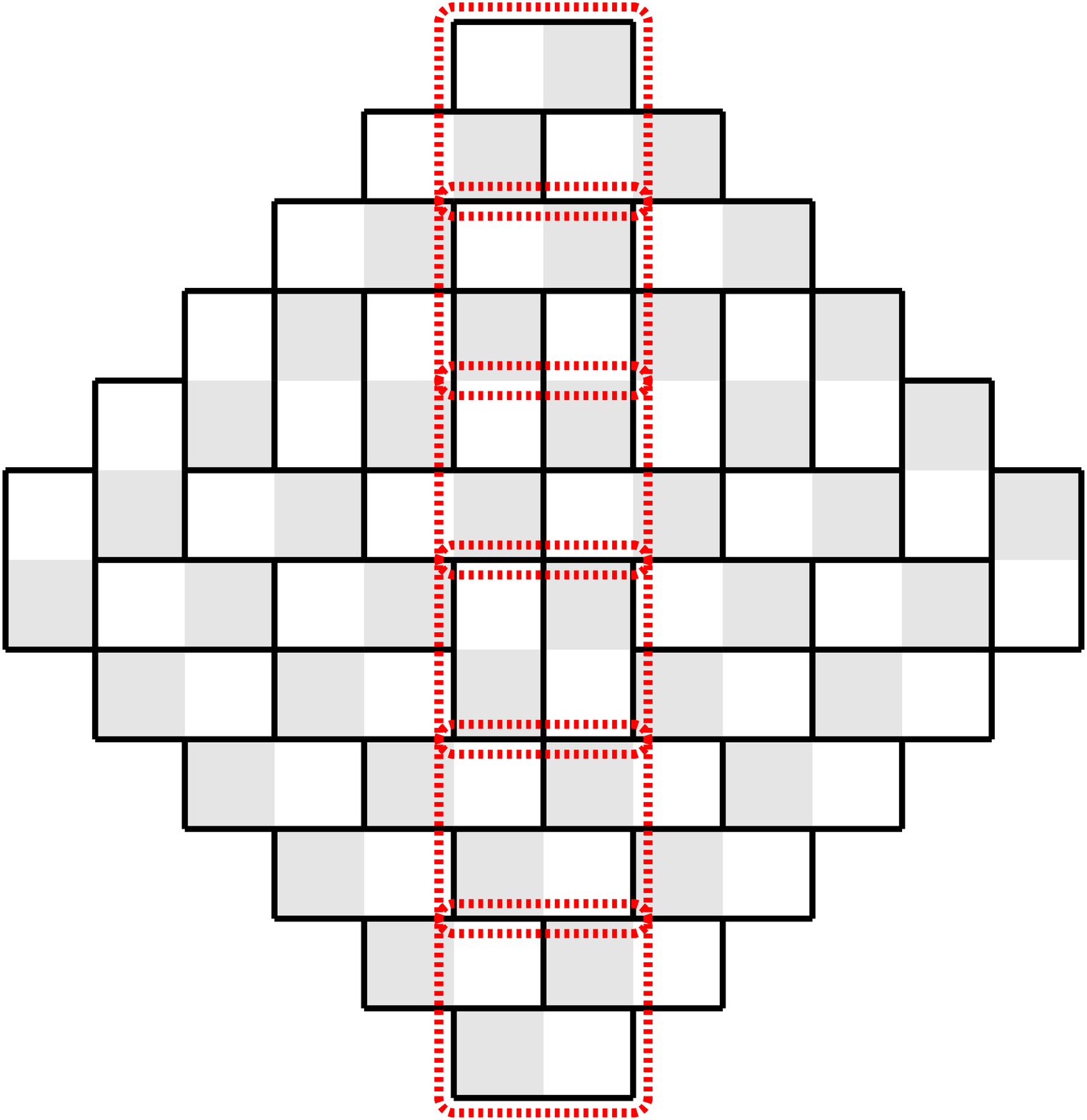}}
    \hspace{5mm}
    \subfigure[]
    {\label{fig.offdiagtiling}\includegraphics[width=0.4\textwidth]{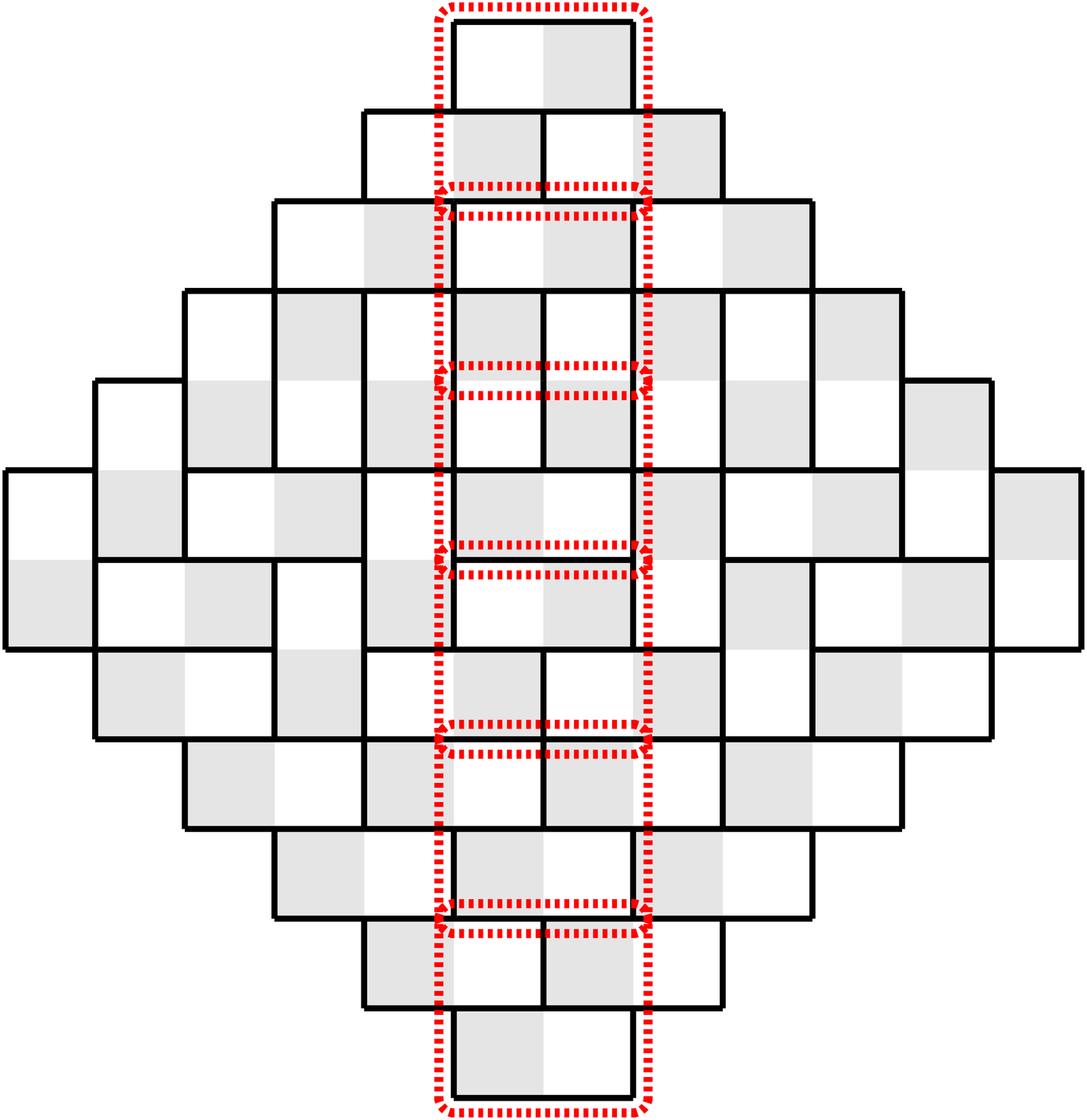}}
    \caption{(a) A diagonally symmetric domino tiling of $AD(6)$. (b) An off-diagonally symmetric domino tiling of $AD(6)$.}
    \label{fig.ADsymtiling}
\end{figure}

We also consider the case when there are some boundary defects. Label the unit squares on the southwestern boundary of $AD(n)$ by $1,2,\dotsc,n$ from bottom to top. By symmetry, if we remove one unit square from the southwestern boundary, then the corresponding unit square on the southeastern boundary also needs to be removed. Let $O(n;I)$ be the set of off-diagonally symmetric domino tilings of $AD(n)$ with all unit squares removed from the southwestern boundary \textbf{except} for those labeled $I = \{i_1,\dotsc,i_r\}$, where $1 \leq i_1 < \dotsc < i_r \leq n$. If there is no boundary defect (that is, $I = \{1,2,\dotsc n\}$), then we simply write $O(n;I)$ as $O(n)$.

In Figure \ref{fig.ADdent}, we provide an off-diagonally symmetric domino tiling of $AD(6)$ with unit squares labeled $3$ and $5$ removed. This is a tiling in the set $O(6;\{1,2,4,6\})$.
\begin{figure}[htb!]
    \centering
    \includegraphics[width=0.4\textwidth]{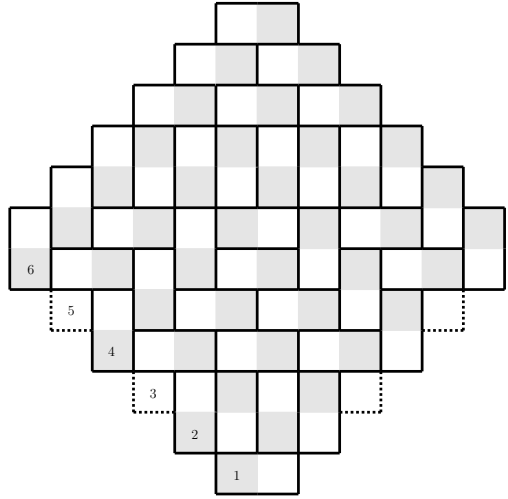}
    \caption{A tiling in the set $O(6;\{1,2,4,6\})$.}
    \label{fig.ADdent}
\end{figure}

\subsection{Main results}

Our first theorem (Theorem \ref{thm.main1}) provides a Pfaffian formula for enumerating off-diagonally symmetric domino tilings of the Aztec diamond with boundary defects.
\begin{theorem}\label{thm.main1}
  Let $I = \{i_1,\dotsc,i_r\}$, where $1 \leq i_1 < \dotsc < i_r \leq n$. Then there exists an infinite skew-symmetric matrix $A$ with integer entries such that
  \begin{equation}\label{eq.main1}
    |O(n;I)| = \pf(A_I),
  \end{equation}
  where $A_I$ is the matrix obtained from $A$ by selecting rows and columns indexed by $I$.
\end{theorem}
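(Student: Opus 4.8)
The plan is to convert the enumeration of $O(n;I)$ into a counting problem for non-intersecting lattice paths and then to evaluate the result by a Pfaffian formula in the spirit of Stembridge's theorem for path families with free endpoints.

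\textbf{Step 1 (from tilings to paths).} First I would set up the standard bijection between domino tilings and families of non-intersecting lattice paths, obtained by routing a path through each domino according to a fixed local rule. Under this correspondence a tiling in $O(n;I)$ is recorded by a family of non-intersecting paths whose starting points are exactly the retained boundary squares indexed by $I$ on the southwestern side (and, by the imposed reflection symmetry, their mirror images on the southeastern side). Removing a boundary square simply deletes the corresponding source, so that $I=\{1,\dots,n\}$ recovers the full set of sources and hence the defect-free case $O(n)$.

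\textbf{Step 2 (folding along the diagonal).} Because every tiling in $O(n;I)$ is symmetric about the vertical diagonal, I would cut the configuration along that diagonal and keep only one half, a triangular region. This reduces the problem to counting non-intersecting paths with sources at the positions determined by $I$ and endpoints lying on the diagonal. The defining off-diagonal condition --- that each of the $n$ diagonal cells contains exactly one complete domino --- is precisely what pins down the admissible local behaviour of the paths as they reach the diagonal. It is this folding that replaces the Lindström--Gessel--Viennot determinant by a Pfaffian: the endpoints on the diagonal are no longer prescribed but must be summed over in a symmetric, self-paired fashion, which is exactly the combinatorial situation governed by Stembridge's formula.

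\textbf{Step 3 (Pfaffian evaluation).} With the sources fixed and the endpoints free on the diagonal, I would invoke a suitable modification of Stembridge's Pfaffian formula for non-intersecting path families with free endpoints. Setting
\[
  A_{i,j} = \sum_{b<b'}\Bigl( P(i\to b)\,P(j\to b') - P(i\to b')\,P(j\to b)\Bigr),
\]
where $P(i\to b)$ is the (weighted) count of single paths from the source labelled $i$ to a diagonal endpoint $b$ and the sum runs over ordered pairs of diagonal endpoints, produces a skew-symmetric integer matrix $A$, and the modified Stembridge formula gives $|O(n;I)| = \pf(A_I)$. The crucial point for the stated conclusion is that, in a fixed half-plane coordinate system, the source labelled $i$ and the reachable diagonal endpoints depend only on $i$ and $b$; the order $n$ merely governs which sources are present but does not otherwise constrain the single-path counts. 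Consequently each $A_{i,j}$ is independent of $n$, so that $A$ is a single infinite skew-symmetric matrix and $A_I$ is genuinely the submatrix selected by the rows and columns in $I$.

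The delicate point, and the step I expect to demand the most care, is Step 2: one must verify that the off-diagonal condition translates exactly into the boundary behaviour on the diagonal for which Stembridge's formula applies (after the modification accounting for the dominoes straddling the diagonal), and that the induced self-pairing is precisely the one yielding a Pfaffian rather than some other signed sum. Pinning down the correct weights $P(i\to b)$ and confirming their $n$-independence is where the substance of the argument lies; once these are in place, the skew-symmetry and integrality of $A$ are immediate from the displayed definition.
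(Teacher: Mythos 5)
There is a genuine gap, and it sits exactly where you predicted the difficulty would be: your Step 3 kernel does not encode the off-diagonal condition at all. As displayed,
\begin{equation*}
A_{i,j} \;=\; \sum_{b<b'} \bigl( P(i\to b)\,P(j\to b') - P(i\to b')\,P(j\to b) \bigr)
\end{equation*}
runs over \emph{all} pairs of admissible endpoints, which is the unmodified Stembridge kernel $Q_V(u_i,u_j)$ of \eqref{eq.Q}; its Pfaffians enumerate all \emph{diagonally} symmetric tilings $D(n;I)$ --- this is precisely the matrix $B$ of Remark \ref{rmk.diag} --- not the off-diagonally symmetric ones. The missing combinatorial input is the translation of ``each diagonal cell contains exactly one complete domino'' into endpoint data (Lemma \ref{lemma.diag0} in the paper): the $2n$ candidate endpoints come in blocks $v_\ell^{*}=\{v_{2\ell-1},v_{2\ell}\}$, one block per diagonal cell (note that $v_{2\ell-1}$ lies one unit \emph{left} of the diagonal and is reached by paths crossing via a $(1,1)$-step, so not all endpoints are literally on the diagonal), and the $\ell$th cell is assigned $0$ if and only if \emph{both or neither} of $v_{2\ell-1},v_{2\ell}$ are endpoints. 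The correct kernel therefore restricts your double sum to pairs inside a single block,
\begin{equation*}
a_{i,j} \;=\; \sum_{\ell=1}^{n} \det \begin{bmatrix} P(i\to v_{2\ell-1}) & P(i\to v_{2\ell}) \\ P(j\to v_{2\ell-1}) & P(j\to v_{2\ell}) \end{bmatrix},
\end{equation*}
as in \eqref{eq.Q*}; numerically this is very different from your sum (the paper gets $a_{1,j}=2$, whereas the unrestricted kernel gives the entries $6,18,46,\dotsc$ of \eqref{eq.matrixB}).

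The second, related gap is that the ``suitable modification of Stembridge's formula'' you invoke is not a citation but a claim requiring proof, and you leave it unproved. Once the kernel is restricted to blocks, the Pfaffian expansion counts tuples $(\sigma,p_1,\dotsc,p_n)$ in which only the paths matched by $\sigma$ are required to end in a common block, and one must check that Stembridge's tail-swapping involution is well-defined on this restricted configuration set: after exchanging the endpoints of the two intersecting paths $p_i,p_j$, every pair matched by $\sigma'$ must still end within a single block $v_\ell^{*}$. This verification is exactly the content of Lemma \ref{lemma.stem} and is the one new ingredient beyond Stembridge; with your unrestricted kernel the question never even arises, which is a symptom of having the wrong matrix. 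Your Step 1, the folding in Step 2, and the observation that the entries are independent of $n$ (so a single infinite matrix $A$ suffices) are all consistent with the paper's argument; what is missing is the both-or-neither characterization and the involution check that makes the restricted Pfaffian identity true.
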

\begin{remark}
  We point out that this result follows from a modification of Stembridge's theorem \cite[Theorem 3.1]{Stem90}, in which he gives a Pfaffian expression for enumerating certain families of non-intersecting paths\footnote{We say two paths are \textit{non-intersecting} if they do not pass through the same vertex. A family of paths is non-intersecting if any two of the paths are non-intersecting.} (see Section \ref{sec.LGV} and Lemma \ref{lemma.stem}). However, each entry of the matrix $A$ obtained from a direct computation is slightly complicated, being given by a sum of determinants of $2 \times 2$ matrices whose entries are Delannoy numbers.
\end{remark}

The main contribution of this paper is given in the second theorem (Theorem \ref{thm.main2}). We find a nice way to express the entries of the matrix $A$, which leads to the simpler expression stated in Corollary \ref{cor.main}.

\begin{theorem}\label{thm.main2}
  The entries of the infinite skew-symmetric matrix\footnote{$A$ is skew-symmetric if $a_{i,j} = -a_{j,i}$ for $i,j \geq 1$.} $A = [a_{i,j}]_{i,j \geq 1}$ stated in Theorem \ref{thm.main1} are given by the following recurrence relation:
  \begin{equation}\label{eq.rec}
    \begin{cases}
    a_{1,j} = 2, & j > 1, \\
    a_{i,j} = a_{i-1,j} + a_{i,j-1} + a_{i-1,j-1}, & j>i+1,\quad i>1, \\
    a_{i,j} = a_{i-1,j} + a_{i-1,j-1} + 2(-1)^{i-1}, & j=i+1,\quad i>1, \\
    a_{i,j} = -a_{j,i}, & j \leq i.
    \end{cases}
  \end{equation}
\end{theorem}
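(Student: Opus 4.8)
The plan is to argue directly from the closed-form expression for the entries $a_{i,j}$ that comes out of the proof of Theorem~\ref{thm.main1}. By that theorem together with the following remark, for $i<j$ the entry is an explicit sum of $2\times 2$ determinants
\[
a_{i,j} = \sum_{k<l} \det\begin{pmatrix} D(i,k) & D(i,l) \\ D(j,k) & D(j,l) \end{pmatrix},
\]
where $D(\,\cdot\,,\,\cdot\,)$ are Delannoy numbers (counting lattice paths built from unit East, North and diagonal steps), with the source index entering as one coordinate and the endpoint index as the other. The engine of the whole proof is the defining Delannoy recurrence
\[
D(m,n) = D(m-1,n) + D(m,n-1) + D(m-1,n-1),
\]
which already has the shape of the three-term relation to be established; the real content is that this scalar recurrence survives after one antisymmetrizes and sums over endpoints.

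Because the Delannoy recurrence couples the two coordinates, a naive term-by-term substitution into the determinants does not close up: it produces determinants whose two rows carry mismatched endpoint indices and hence are not of the form $a_{\cdot,\cdot}$. I would therefore first \emph{simplify} the endpoint sum. The summand is a Gessel--Viennot count of a non-intersecting pair of Delannoy paths, and the off-diagonal symmetry already built into the Pfaffian lets one collapse such a mirror pair into a single path; concretely I expect this reflection argument (equivalently, a telescoping of the sum over the pairs $k<l$) to reduce the quadratic double sum to twice a single Delannoy number, of the form
\[
a_{i,j} = 2\,D(i-1,\,j-1) + (\text{a correction supported near the superdiagonal } j=i+1),
\]
the factor $2$ together with $D(0,\,\cdot\,)=1$ being exactly what forces $a_{1,j}=2$. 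This collapse from a sum of products to a single Delannoy number is where the bookkeeping is heaviest, and getting the correction term exactly right is the principal obstacle.

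With such a closed form in hand, the interior case $j>i+1$ is immediate: there the correction is inactive and $a_{i,j}=a_{i-1,j}+a_{i,j-1}+a_{i-1,j-1}$ is literally the Delannoy recurrence applied to $D(i-1,j-1)$. On the first row $i=1$ the Delannoy number degenerates to $1$ and leaves the constant $2$. On the superdiagonal $j=i+1$ the correction is active, and since these entries feed into one another through the recurrence, the correction propagates with an alternating sign; I expect the parity $(-1)^{i-1}$ to originate from the orientation of the near-diagonal reflected path, and confirming this sign is the delicate point of the boundary analysis, producing the term $2(-1)^{i-1}$. Finally, skew-symmetry $a_{i,j}=-a_{j,i}$ for $j\le i$ requires no computation: interchanging the source indices $i$ and $j$ swaps the two rows of every determinant in the defining sum and hence negates $a_{i,j}$.
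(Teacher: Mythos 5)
Your strategy---extract a closed form for $a_{i,j}$ from the Delannoy-determinant expression and then verify \eqref{eq.rec} against it---hinges entirely on the claimed collapse of the endpoint sum to ``twice a single Delannoy number plus a correction supported near the superdiagonal,'' and that claim is not merely unproven (you flag it yourself with ``I expect'') but false. Computing from \eqref{eq.rec} (whose first rows the paper confirms independently, cf.\ \eqref{eq.check1}--\eqref{eq.check3}) one finds $a_{4,5}=34$, $a_{4,6}=110$, $a_{4,7}=242$, while the doubled Delannoy row $2d_{3,q}$ is $2,14,50,126,258,462,\dotsc$; no shift of the index makes the interior entries of row $4$ a doubled Delannoy row, since $(110,242)$ is not a pair of consecutive values in that list. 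In fact $a_{4,j}=2d_{3,j-3}-16$ for all $j\geq 5$, and for $i=5$ the deviation from any shifted doubled Delannoy row grows with $j$: the forcing terms $2(-1)^{i-1}$ injected along $j=i+1$ propagate through the interior recurrence into the whole cone $j>i$, which is precisely why the correct closed form, Corollary \ref{cor.main}, is an alternating sum of $i$ Schr\"oder-triangle entries $a_{i,j}=2\sum_{\ell=1}^{i}(-1)^{\ell-1}s_{i-\ell,j-\ell-1}$ rather than a single Delannoy number with a local correction. (Note also the logical order: the paper derives \eqref{eq.cor} \emph{from} the recurrence, so proving the recurrence from a closed form would require an independent derivation of that closed form from \eqref{eq.Q*}, which is essentially the whole difficulty and is absent here. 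The proposed mechanism, a ``reflection/telescoping'' collapse, has no obvious realization: the two paths in each term have distinct sources and both end in the same pair, and no mirror symmetry pairs them off.)

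Two further slips: your defining formula sums over \emph{all} endpoint pairs $k<\ell$, which is $Q_V$ and yields the matrix $B$ of Remark \ref{rmk.diag}---a matrix that demonstrably does not satisfy \eqref{eq.rec}; the off-diagonal condition restricts the sum to the $n$ adjacent pairs $v_\ell^{*}=\{v_{2\ell-1},v_{2\ell}\}$ as in \eqref{eq.Q*}, and moreover on $\mathcal{DS}(n)$ the path counts from $u_j$ are sums of two Delannoy numbers, per \eqref{eq.partition2}. For contrast, the paper never collapses the sum at all: it proves the recurrence structurally, decomposing each path by its first step (two edges out of $u_i$ on $\mathcal{DS}(n)$, three out of $x_i$ on the augmented graph $\overline{\mathcal{DS}}(n)$), using bilinearity of \eqref{eq.Q*} together with the translation-invariance Lemma \ref{lemma.aux1}---whose proof exploits that shifting both sources down one line shifts the endpoint-count vectors by exactly two columns, respecting the pairing $v_\ell^{*}$---to establish Proposition \ref{prop.aux}, where the sign $2(-1)^{i-1}$ emerges from explicit cancellation in the $16$- and $10$-term expansions, with base cases $i=1,2,3$ computed directly. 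Of your proposal, only the skew-symmetry observation (row swap negates each determinant) and the $i=1$ computation survive; the interior and superdiagonal cases, i.e.\ the substance of the theorem, rest on a false closed form.
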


It turns out that this implies the following result.
\begin{corollary}\label{cor.main}
  For $1 \leq i < j$, the entries $a_{i,j}$ of the matrix $A$ stated in Theorem \ref{thm.main2} can be expressed explicitly as an alternating sum of entries $s_{p,q}$ in the Schr{\"o}der triangle (see Section \ref{sec.sch}).
  \begin{equation}\label{eq.cor}
    a_{i,j} = 2 \sum_{\ell = 1}^{i}(-1)^{\ell-1}s_{i-\ell,j-\ell-1}.
  \end{equation}
\end{corollary}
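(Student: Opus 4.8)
The plan is to prove Corollary \ref{cor.main} by induction, using the recurrence \eqref{eq.rec} from Theorem \ref{thm.main2} together with the defining recurrence of the Schr\"oder triangle. Since the closed form in \eqref{eq.cor} is a linear combination of Schr\"oder-triangle entries, the natural strategy is to verify that the right-hand side
\[
f(i,j) := 2\sum_{\ell=1}^{i}(-1)^{\ell-1}s_{i-\ell,\,j-\ell-1}
\]
satisfies exactly the same recurrence and the same initial/boundary conditions as $a_{i,j}$, and then invoke uniqueness. The recurrence \eqref{eq.rec} determines the entire matrix $A$ above the diagonal once we fix the first row $a_{1,j}=2$ and the diagonal rule at $j=i+1$, so it suffices to check that $f(i,j)$ reproduces these data in each of the three regimes $j>i+1$, $j=i+1$, and the base row $i=1$.

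First I would record the recurrence satisfied by the Schr\"oder-triangle entries $s_{p,q}$ from Section \ref{sec.sch}; this will be of the form $s_{p,q}=s_{p-1,q}+s_{p,q-1}+s_{p-1,q-1}$ (the same ``down-right-diagonal'' sum that appears in \eqref{eq.rec}), together with the appropriate boundary values. The key computation is then to apply this relation termwise inside the alternating sum defining $f(i,j)$ for $j>i+1$. Each summand $s_{i-\ell,j-\ell-1}$ expands into three pieces; after reindexing the sums, I expect the combination $f(i-1,j)+f(i,j-1)+f(i-1,j-1)$ to telescope against $f(i,j)$, because the alternating signs $(-1)^{\ell-1}$ cause most intermediate terms to cancel in pairs. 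What should survive is precisely the main recurrence in the generic region $j>i+1$, confirming that line of \eqref{eq.rec}.

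Next I would handle the boundary line $j=i+1$ separately, since the recurrence there carries the extra inhomogeneous term $2(-1)^{i-1}$. Setting $j=i+1$ in $f$ and again expanding via the Schr\"oder recurrence, I anticipate that the telescoping leaves exactly one uncancelled boundary term of the Schr\"oder triangle, which — after multiplication by $2$ and the surviving sign $(-1)^{i-1}$ — accounts for the correction $2(-1)^{i-1}$. This step depends on knowing the edge values of the Schr\"oder triangle (e.g.\ the entries $s_{p,p}$ or $s_{p,q}$ for $q$ near $p$), so the explicit form of those boundary entries is what makes this case work. Finally, the base case $i=1$ is immediate: $f(1,j)=2s_{0,j-2}$, and one checks from the Schr\"oder-triangle definition that $s_{0,q}=1$ for the relevant $q$, giving $f(1,j)=2=a_{1,j}$.

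The main obstacle I foresee is bookkeeping the reindexing of the alternating sums cleanly enough that the telescoping is transparent rather than a mass of cancelling terms; in particular one must be careful with the lower limit $\ell=1$ and the top term $\ell=i$, since the three shifted sums $f(i-1,j)$, $f(i,j-1)$, $f(i-1,j-1)$ have ranges that differ by one, and it is exactly the mismatch at the endpoints of these ranges that produces (in the generic case) a clean identity and (on the diagonal $j=i+1$) the inhomogeneous term. A secondary subtlety is ensuring the formula respects the skew-symmetry $a_{i,j}=-a_{j,i}$; but since \eqref{eq.cor} is only asserted for $i<j$ and the skew-symmetric extension is part of the definition of $A$, this requires no separate argument beyond the three cases above.
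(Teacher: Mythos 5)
Your plan is correct and takes essentially the same route as the paper: the paper's proof of Corollary \ref{cor.main} likewise verifies that $f(i,j)=2\sum_{\ell=1}^{i}(-1)^{\ell-1}s_{i-\ell,j-\ell-1}$ satisfies the recurrence \eqref{eq.rec} of Theorem \ref{thm.main2} in exactly your three regimes ($i=1$; $j>i+1$; $j=i+1$), expanding each summand via the Schr\"oder recurrence \eqref{eq.recsch} (including its diagonal form $s_{p,p}=s_{p-1,p}+s_{p-1,p-1}$), peeling off the $\ell=i$ term, and using $s_{0,q}=1$ so that this leftover term gives $2$ in the base row and $2(-1)^{i-1}s_{0,0}=2(-1)^{i-1}$ on the line $j=i+1$. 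The only cosmetic difference is that what you describe as pairwise telescoping is really a direct regrouping of the three shifted sums into $f(i-1,j)+f(i,j-1)+f(i-1,j-1)$, with the endpoint mismatch you flag resolved precisely as you anticipate.
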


Our data shows that the Pfaffians of the matrices $A_I$ do have large prime factors even for small values of the parameters, so having a nice product formula seems unlikely. However, the following striking pattern seems to hold: the number of off-diagonally symmetric domino tilings of $AD(2n)$ can be written as follows.
\begin{conjecture}\label{conj}
  There exists an integer sequence $\{o_n\}_{n \geq 0}$ so that
  \begin{equation}\label{eq.conj}
    |O(2n)| = 2^n o_{n-1}o_n.
  \end{equation}
  The first nine terms of $o_n$ are listed in Table \ref{tab.o_n}.
\end{conjecture}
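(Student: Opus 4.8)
The plan is to feed the explicit description of $A$ from Theorem~\ref{thm.main2} and Corollary~\ref{cor.main} into the Pfaffian formula of Theorem~\ref{thm.main1}. Taking $I=\{1,2,\dots,2n\}$ gives $|O(2n)|=\pf(A_{[2n]})$, where $A_{[2n]}$ is the leading $2n\times 2n$ principal submatrix of $A$. Being skew-symmetric with (conjecturally) nonzero leading Pfaffian minors, $A_{[2n]}$ admits a skew-orthogonalization $A_{[2n]}=L\,\Omega\,L^{\mathsf T}$ with $L$ lower unitriangular and $\Omega=\bigoplus_{k=1}^{n}\bigl(\begin{smallmatrix}0&\pi_k\\-\pi_k&0\end{smallmatrix}\bigr)$, so that $\pf(A_{[2n]})=\prod_{k=1}^{n}\pi_k$. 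The whole problem is then to pin down the Pfaffian pivots $\pi_k$.

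The guiding observation is that the right-hand side of \eqref{eq.conj} is a product of \emph{two consecutive} terms $o_{n-1}o_n$, exactly the shape produced by a telescoping pivot product. I would therefore aim to prove, by induction on $k$, that the leading Pfaffian minors $P_k:=\pf(A_{[2k]})$ satisfy $P_k=2^{k}o_{k-1}o_k$; equivalently that $\pi_k=P_k/P_{k-1}=2\,o_k/o_{k-2}$, so that $\prod_{k=1}^{n}\pi_k$ telescopes in steps of two down to $2^{n}o_{n-1}o_n$ once the initial values are normalized (as forced by $P_0=1$ and $P_1=a_{1,2}=2$). As a preliminary step I would read off from Table~\ref{tab.o_n} a three-term recurrence (or generating function) for $o_n$ and take it as the definition of the sequence, reducing the conjecture to the purely matrix-theoretic claim that the pivots of $A_{[2n]}$ obey $\pi_k=2\,o_k/o_{k-2}$.

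To access the pivots I would exploit a self-similarity hidden in \eqref{eq.rec}. Conjugating by the lower-unitriangular difference matrix $L$ with $1$'s on the diagonal and $-1$'s on the subdiagonal preserves the Pfaffian (since $\det L=1$) and, by the interior relation $a_{i,j}=a_{i-1,j}+a_{i,j-1}+a_{i-1,j-1}$, turns the bulk entries of $LAL^{\mathsf T}$ into $2\,a_{i-1,j-1}$ --- twice a diagonally shifted copy of $A$. This self-similar factor of $2$ is, I expect, the true origin of the prefactor $2^{n}$: iterating the reduction should peel off one power of $2$ per step while leaving behind a boundary Pfaffian assembled from the first row (all $2$'s) and the near-diagonal corrections $2(-1)^{i-1}$. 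The Schr\"oder-triangle closed form of Corollary~\ref{cor.main} would be the principal tool for controlling these boundary contributions and matching the residual Pfaffian with $o_{n-1}o_n$. A more combinatorial alternative would be to fold a tiling in $O(2n)$ across the vertical diagonal, show that it separates into two independent half-tilings enumerated by $o_{n-1}$ and $o_n$, and account for $2^{n}$ by paired choices among the $2n$ diagonal cells; this would give a conceptual explanation for the product shape of \eqref{eq.conj}.

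The hard part will be the factorization itself. A structured Pfaffian has no general reason to split off a nontrivial product, and the collapse of $\prod_k\pi_k$ to only \emph{two} sequence terms is a genuine rigidity that forces the pivots to telescope exactly as $2\,o_k/o_{k-2}$ rather than forming some opaque product. Establishing this --- equivalently, showing that the boundary corrections surviving the differencing reduction assemble precisely into $o_{n-1}o_n$ --- is the step I expect to resist routine computation, and the point at which the fine structure of Corollary~\ref{cor.main} must enter essentially. A secondary difficulty is to guess the recurrence for $o_n$ from the finite data of Table~\ref{tab.o_n} in a form robust enough to drive the induction and provably consistent with the Pfaffian recurrence coming from the self-similar reduction.
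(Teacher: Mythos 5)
This statement is a \emph{conjecture}: the paper contains no proof of \eqref{eq.conj}, only computational verification up to $n=25$ and, in Appendix \ref{sec.eva}, an exploration of precisely the pivot-decomposition route you propose. Your plan of writing $\pf(A_{[2n]})=\prod_{k=1}^n\pi_k$ with $\pi_k=\pf(A_{[2k]})/\pf(A_{[2k-2]})$ is the Ishikawa--Tagawa--Zeng decomposition of \eqref{eq.pfd3}, which the author carries out for the generalized matrix $A(k,t)$, computing the pivots $t_1,\dotsc,t_8$ in \eqref{eq.ts}; these do exhibit your predicted telescoping shape $t_\ell=o_\ell/o_{\ell-2}$ (in the $(k,t)$-refined form of Conjecture \ref{conjext}), but the author explicitly reports that the recursive general forms of the pivots ``do not seem to be helpful for proving Conjecture \ref{conjext}.'' So your proposal does not close the gap: the identity $\pi_k=2\,o_k/o_{k-2}$ is not a lemma you can induct with --- it is a restatement of the conjecture itself, since $o_n$ has no independent definition in the paper. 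Your suggested preliminary step, reading off a three-term recurrence for $o_n$ from Table \ref{tab.o_n}, is not available: nothing in the paper produces such a recurrence, and without an autonomous characterization of $o_n$ the induction has no target. (Your nondegeneracy assumption $\pf(A_{[2k]})\neq 0$ is at least harmless, since $\pf(A_{[2k]})=|O(2k)|>0$.)

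Your differencing observation is correct as far as it goes, and is worth making precise: conjugating $A_{[2n]}$ by the unitriangular difference matrix preserves the Pfaffian, and by \eqref{eq.rec} the resulting matrix has first row $(0,2,0,\dotsc,0)$, bulk entries $2a_{i-1,j-1}$, and superdiagonal entries $2a_{i-1,i}+2(-1)^{i-1}$; expanding the Pfaffian along the first row and extracting the common factor $2$ from the remaining $(2n-2)\times(2n-2)$ block yields $\pf(A_{[2n]})=2^n\pf(N)$, where $N$ is the shifted principal submatrix $A_{\{2,\dotsc,2n-1\}}$ perturbed by alternating $\pm1$ on the superdiagonal. This genuinely explains the prefactor $2^n$, but it leaves the entire substance of \eqref{eq.conj} untouched: the claim $\pf(N)=o_{n-1}o_n$ is exactly the mysterious product structure, the perturbed matrix $N$ does not satisfy \eqref{eq.rec}, so the reduction does not iterate, and no mechanism is offered for why a structured Pfaffian should split into two consecutive terms of any sequence --- which you yourself flag as the step you expect to resist computation. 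Finally, the combinatorial alternative in your third paragraph is misconceived: an off-diagonally symmetric tiling is already determined by its left half (that is how the bijection onto $\mathscr{P}_0(U,V^{*})$ in $\mathcal{DS}(2n)$ works in the proof of Theorem \ref{thm.main1}), so there are not two independent half-tilings to pair, and the diagonal cells are not $2n$ free binary choices; the problems posed in Section \ref{sec.open} ask precisely for such an interpretation because none is known.
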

This conjecture has been checked by computer up to $n=25$; see Section \ref{sec.open} for more discussions.
\begin{table}[htb!]
\centering
  \begin{tabular}{c|c|c|c|c|c|c|c|c|c}
    $n$ & $0$ & $1$ & $2$ & $3$ & $4$ & $5$ & $6$ & $7$ & $8$\\
    \hline
    $o_n$ & $1$ & $1$ & $3$ & $13$ & $149$ & $3 \times 887$ & $5 \times 29 \times 823$ & $29 \times 193 \times 1549$ & $3 \times 29 \times 263 \times 67049$ \\
  \end{tabular}
  \vspace{0.3cm}
  \caption{The first nine terms of $o_n$.}\label{tab.o_n}
\end{table}

The rest of this paper is organized as follows. In Section \ref{sec.pre}, we provide a brief introduction to Pfaffians, Delannoy numbers and the Schr{\"o}der triangle. We also review the main tools (the method of non-intersecting lattice paths, the Lindstr{\"o}m--Gessel--Viennot theorem and Stembridge's theorem) that will be used to prove our main results. In Section \ref{sec.enumerate}, we prove Theorem \ref{thm.main1} and discuss the diagonally symmetric domino tilings of $AD(n)$. The proof of Theorem \ref{thm.main2} and Corollary \ref{cor.main} will be given in Section \ref{sec.rec}. In Section \ref{sec.open}, we provide open problems related to the enumeration of off-diagonally symmetric domino tilings of $AD(n)$. In Appendix \ref{sec.eva}, we give a recursive way to calculate the Pfaffian of the matrix in Theorem \ref{thm.main2}.

\section{Preliminaries}\label{sec.pre}

We begin this section by introducing the notation of the Pfaffian (Section \ref{sec.pf}), Delannoy numbers and the Schr{\"o}der triangle (Section \ref{sec.sch}). In Section \ref{sec.path}, we review the method of non-intersecting lattice paths which turns the enumeration of tilings into the enumeration of non-intersecting lattice paths. The determinant and Pfaffian formulas for enumerating families of non-intersecting lattice paths will be stated in Section \ref{sec.LGV}.

\subsection{Pfaffians}\label{sec.pf}

Let $A = [a_{i,j}]_{i,j \geq 1}$ be an infinite matrix. If $I = \{i_1,\dotsc,i_r\}$ (resp., $J = \{j_1, \dotsc,j_r\}$) is a set of row (resp., column) indices, then we write $A_{I,J}$ for the $r \times r$ submatrix obtained from $A$ by choosing the rows indexed by $I$ and columns indexed by $J$. We say a matrix $A$ is \textit{skew-symmetric} if entries $a_{i,j}=-a_{j,i}$ for $i,j \geq 1$. In particular, the diagonal entries of $A$ are all zeros. For a skew-symmetric matrix $A$, we always take $I = J$. So, we simply write $A_{I}$ for $A_{I,I}$ if $A$ is skew-symmetric.

A \textit{perfect matching} or \textit{$1$-factor} $\sigma$ of $[2n] = \{1,2,\dotsc,2n\}$ is a partition of $[2n]$ into $2$-element blocks. A perfect matching can be written as $\sigma = \{(\sigma_1,\sigma_2),\dotsc,(\sigma_{2n-1},\sigma_{2n})\}$, where $\sigma_{2i-1}$ and $\sigma_{2i}$ ($\sigma_{2i-1}<\sigma_{2i}$) are in the same block for $i=1,2,\dotsc, n$, and $\sigma_{1} < \sigma_{3} < \sigma_{5} < \dotsc < \sigma_{2n-1}$. We write $\mathscr{F}_{2n}$ for the set of perfect matchings of $[2n]$.

Let $A$ be a $2n \times 2n$ skew-symmetric matrix, the Pfaffian of $A$ (see for instance \cite[Section~2]{Stem90} and \cite[Section~1]{ITZ13}) is defined to be
\begin{equation}\label{eq.pf}
  \pf(A)= \sum_{\sigma \in \mathscr{F}_{2n}} \sgn(\sigma) \prod_{i=1}^{n}a_{\sigma_{2i-1}, \sigma_{2i}},
\end{equation}
where the summation is over all perfect matchings $\sigma = \{(\sigma_1,\sigma_2),\dotsc,(\sigma_{2n-1},\sigma_{2n})\}$ of $[2n]$ described in the previous paragraph and $\sgn(\sigma)$ is given by the sign of the permutation written in the following two line notation
$$
\begin{pmatrix}
  1 & 2 & \cdots & 2n-1 & 2n \\
  \sigma_1 & \sigma_2 & \cdots & \sigma_{2n-1} & \sigma_{2n}
\end{pmatrix}.
$$

\subsection{Delannoy numbers and the Schr{\"o}der triangle}\label{sec.sch}

Consider an infinite triangular lattice $\mathcal{T}$, we choose a coordinate system on the triangular lattice (see Figure~\ref{fig.coordinate}) by fixing a lattice point as the origin, and letting the positive $x$-axis (resp., $y$-axis) be a lattice line pointing southeast (resp., northeast). Consequently, the edges that are parallel to the $x$-axis (resp., $y$-axis) are oriented southeast (resp., northeast), while the edges parallel to the line $y=x$ are oriented east.
\begin{figure}[htb!]
    \centering
    \subfigure[]
    {\label{fig.coordinate}\includegraphics[width=0.4\textwidth]{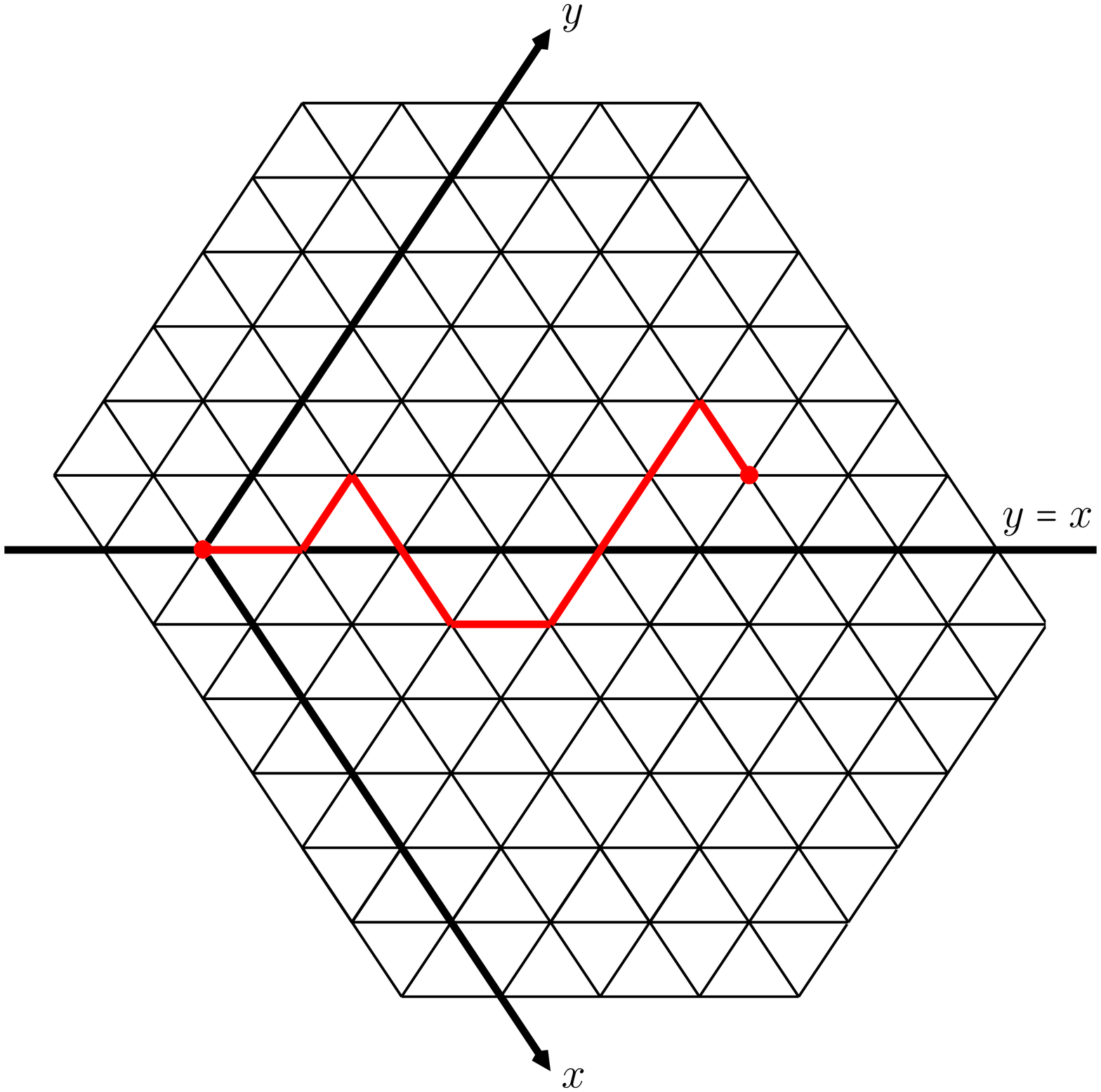}}
    \hspace{7mm}
    \subfigure[]
    {\label{fig.schroder}\raisebox{1.45cm}{
    \includegraphics[width=0.5\textwidth]{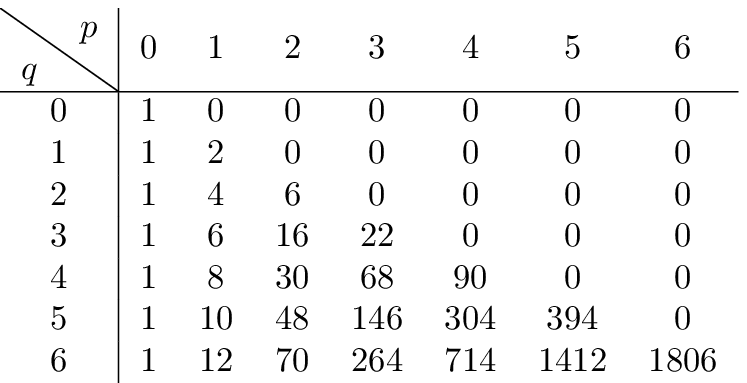}}}
    \caption{(a) The coordinate system on the triangular lattice. A Delannoy path (drawn in red) in the set $\mathscr{D}_{5,6}$. (b) The first few entries of the Schr{\"o}der triangle $\{s_{p,q}\}_{0 \leq p \leq q}$.}
\end{figure}

A \textit{Delannoy path} is a lattice path going from $(0,0)$ to $(p,q)$ ($p,q \geq 0$), using steps $(1,0)$, $(0,1)$ or $(1,1)$ on the triangular lattice $\mathcal{T}$. We write $\mathscr{D}_{p,q}$ for the set of Delannoy paths going from $(0,0)$ to $(p,q)$, see Figure \ref{fig.coordinate} for an example. The Delannoy number, denoted by $d_{p,q}$, is the size of $\mathscr{D}_{p,q}$. By convention, we set $d_{p,q} = 0$ if $p<0$ or $q<0$. The Delannoy numbers can be obtained recursively
\begin{equation}\label{eq.Drec}
  d_{p,q} = d_{p-1,q} + d_{p,q-1} + d_{p-1,q-1},
\end{equation}
with initial values $d_{p,0} = d_{0,q} = 1$ for $p,q \geq 0$. The closed-form expression (see for example \cite[Section 2]{BS05}) is given by
\begin{equation}\label{eq.Dclosed}
  d_{p,q} = \sum_{i=0}^{p}\binom{p}{i}\binom{q}{i}2^i.
\end{equation}

Now, we consider the set of lattice paths $\mathscr{S}_{p,q}$ from $(0,0)$ to $(p,q)$, $0 \leq p \leq q$, using steps $(1,0)$, $(0,1)$ or $(1,1)$ but never pass below the line $y=x$. If we write $s_{p,q}$ for the cardinality of $\mathscr{S}_{p,q}$, then $\{s_{p,q}\}_{0 \leq p \leq q}$ forms a triangular array (see Figure \ref{fig.schroder}) which is called the \textit{Schr{\"o}der triangle}; see for example \cite[A033877]{OEIS} and \cite[Section 2]{PS98}.

We note that $s_{p,q}$ can be expressed recursively as follows.
\begin{equation}\label{eq.recsch}
    \begin{cases}
    s_{0,q} = 1, & q \geq 0, \\
    s_{p,q} = s_{p-1,q} + s_{p,q-1} + s_{p-1,q-1}, & q > p > 0, \\
    s_{p,q} = s_{p-1,q} + s_{p-1,q-1}, & q = p > 0.
    \end{cases}
\end{equation}

\subsection{The method of non-intersecting lattice paths}\label{sec.path}

The method of non-intersecting lattice paths (or simply non-intersecting paths) (see~\cite[Section 3.1]{Propp15}) is one of the powerful techniques used to count domino or lozenge tilings. The core idea is to view such a tiling as a family of non-intersecting paths. We describe below how this method works for domino tilings of the Aztec diamond.

Given the Aztec diamond of order $n$ ($AD(n)$), we consider the checkerboard coloring mentioned in Section \ref{sec.ASMAD}. We mark the midpoint of the left edge of each black unit square, and join these midpoints by edges. Then we obtain a subgraph $\mathcal{AD}(n)$ of the triangular lattice $\mathcal{T}$ (see Figure~\ref{fig.ADtri} for $\mathcal{AD}(6)$).

Let $U = \{u_1,u_2,\dotsc,u_n\}$ be the collection of midpoints (from bottom to top) on the southwestern boundary and $W = \{w_1,w_2,\dotsc,w_n\}$ the collection of midpoints (from bottom to top) on the southeastern boundary. We label midpoints on the vertical diagonal of $AD(n)$ by $\{v_{2},v_4,\dotsc,v_{2n}\}$ from bottom to top, and the midpoints one unit left to the vertical diagonal by $\{v_{1},v_3,\dotsc,v_{2n-1}\}$ from bottom to top. Let $V = \{v_1,v_2,\dotsc,v_{2n}\}$ be the collection of these $2n$ midpoints (they are marked red in Figure~\ref{fig.ADtri}).
\begin{figure}[htb!]
    \centering
    \includegraphics[width=0.6\textwidth]{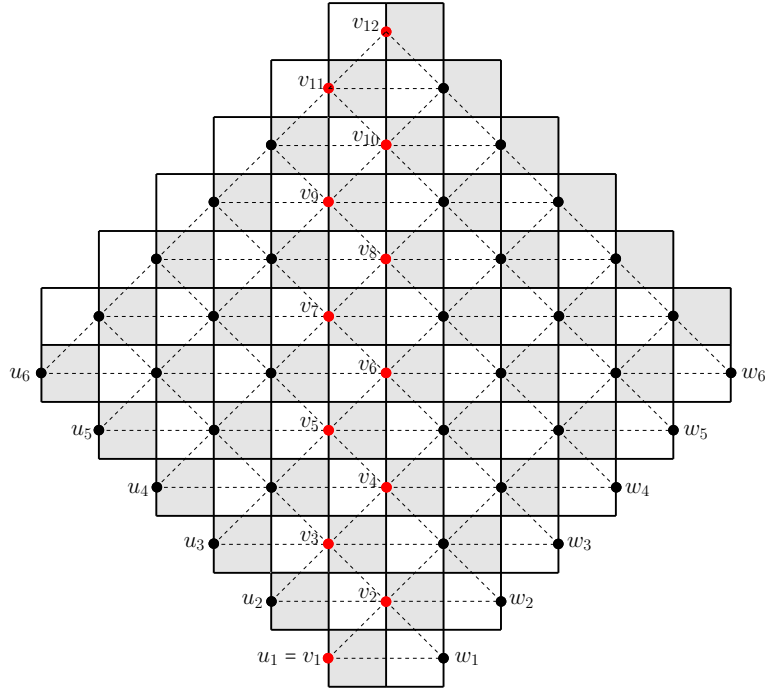}
    \caption{The subgraph $\mathcal{AD}(6)$ (dotted edges) of the triangular lattice on $AD(6)$.}
    \label{fig.ADtri}
\end{figure}

In general, there is a bijection between the set of domino tilings of a region $R$ on the square lattice and families of non-intersecting Delannoy paths with certain starting and ending points (determined by the region $R$). This bijection is implicit in the work of Sachs and Zernitz~\cite{SZ94}, and was made explicit by Randall (see \cite[Section 4]{Ciucu96} and \cite[Section 2]{LRS01}).

The bijection works as follows (see Figure~\ref{fig.ADpath} for two examples). Given a domino tiling, map each horizontal domino with the left unit square colored black to a $(1,1)$ step on the triangular lattice; map each vertical domino with the top (resp., bottom) unit square colored black to a $(1,0)$ (resp., $(0,1)$) step on the triangular lattice. Note that no step of the lattice paths corresponds to horizontal dominoes in which the right unit square is black.
\begin{figure}[htb!]
    \centering
    \subfigure[]
    {\label{fig.offdiagpath}\includegraphics[width=0.48\textwidth]{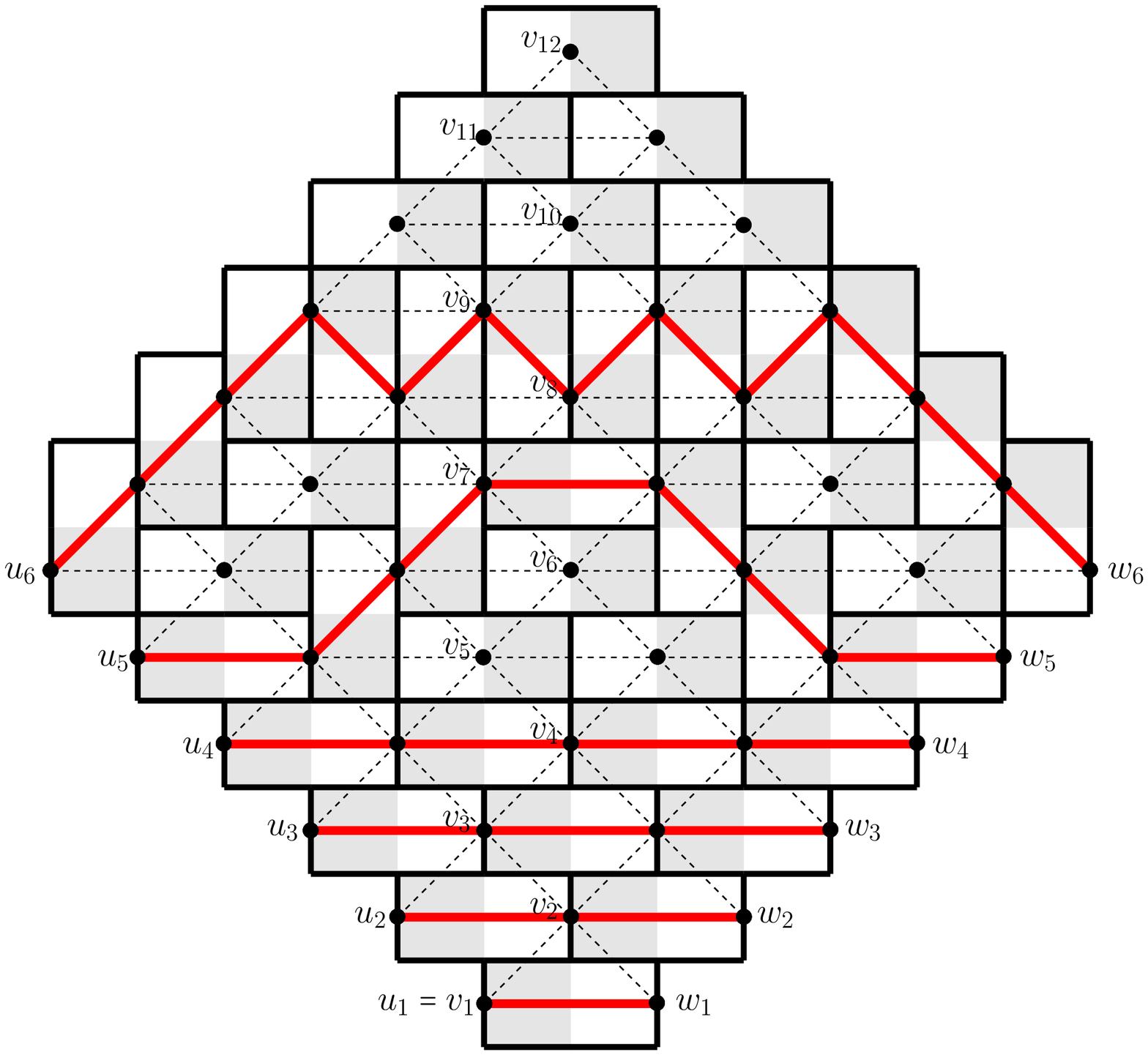}}
    \hspace{0mm}
    \subfigure[]
    {\label{fig.offdiagdentpath}\includegraphics[width=0.48\textwidth]{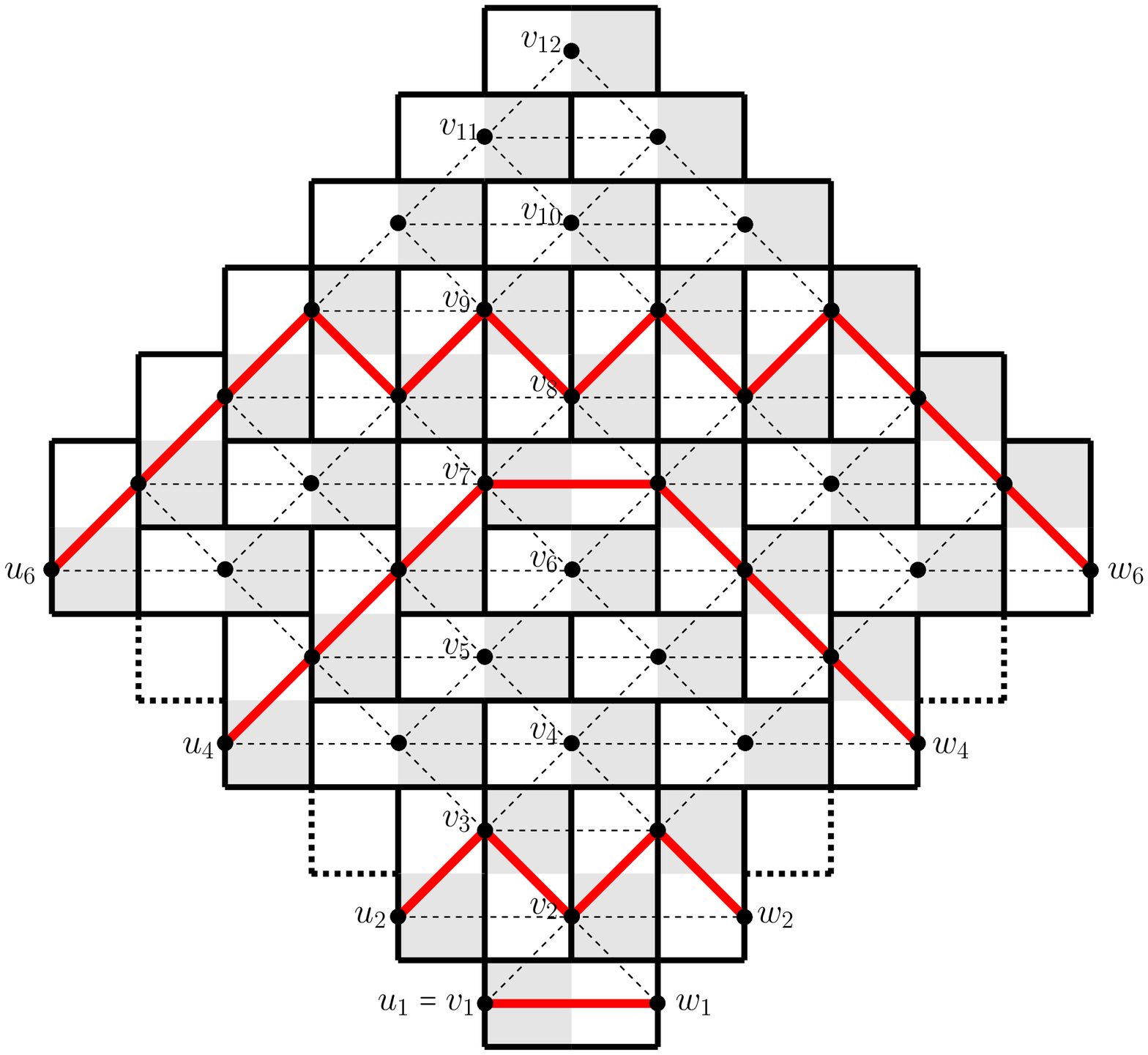}}
    \caption{(a) The non-intersecting Delannoy paths (in red) corresponding to the tiling given in Figure \ref{fig.offdiagtiling}. (b) The non-intersecting Delannoy paths (in red) corresponding to the tiling given in Figure \ref{fig.ADdent}.}
    \label{fig.ADpath}
\end{figure}

As a consequence, domino tilings of $AD(n)$ are in one-to-one correspondence with $n$-tuples of non-intersecting Delannoy paths in $\mathcal{AD}(n)$ connecting $U$ to $W$. Enumerating domino tilings of the Aztec diamond in this way was previously appeared in \cite{EF05} and later in \cite{BvL13}.

From the above discussion, it is easy to see that a diagonally symmetric domino tiling of $AD(n)$ corresponds to an $n$-tuple of non-intersecting Delannoy paths, all of which are symmetric about the vertical diagonal (see Figure~\ref{fig.ADpath}). Consequently, such a domino tiling corresponds to only the left (or right) half part of these $n$ non-intersecting Delannoy paths.

To be more precise, if the intersection of the vertical diagonal and a symmetric Delannoy path is the point $v_{2i}$, then we take $v_{2i}$ as the ending point of that path. For example, the paths going from $u_2,u_4$ and $u_6$ in Figure \ref{fig.offdiagpath} and the paths going from $u_2$ and $u_6$ in Figure \ref{fig.offdiagdentpath} belong to this case.

On the other hand, if the intersection of the vertical diagonal and a symmetric Delannoy path is not a lattice point of $\mathcal{AD}(n)$, then it must cross a $(1,1)$-step in that path. In this case, we take $v_{2i-1}$ as the ending point of that path. See the paths starting from $u_1,u_3$ and $u_5$ in Figure \ref{fig.offdiagpath} and the paths starting from $u_1$ and $u_4$ in Figure \ref{fig.offdiagdentpath} as examples.

Let $\mathcal{DS}(n)$ be the subgraph that consists of the ``left half'' part of $\mathcal{AD}(n)$; see Figure \ref{fig.tri-AD} for an example when $n=10$. Therefore, a diagonally symmetric domino tiling of the Aztec diamond of order $n$ corresponds to an $n$-tuple of non-intersecting Delannoy paths in $\mathcal{DS}(n)$ with the $n$ starting points being the points in $U$, and the $n$ ending points being selected from the $2n$ points in $V$.

Consider the $n$ cells along the vertical diagonal of $AD(n)$, the midpoints of the left edges of the two black unit squares in the $i$th cell are marked by $v_{2i-1}$ and $v_{2i}$ for $i=1,\dotsc,n$ (from bottom to top). We remind the reader that a cell is assigned $1, 0$ or $-1$ if it contains $2, 1$ or $0$ complete domino(es). The ``off-diagonal'' condition is characterized by the following lemma.
\begin{lemma}\label{lemma.diag0}
 The $i$th cell along the vertical diagonal of the Aztec diamond is assigned $0$ if and only if either $v_{2i-1},v_{2i} \in V_0$ or $v_{2i-1},v_{2i} \notin V_0$, where $ V_0 \subset V$ is the set of ending points of the corresponding family of Delannoy paths.
\end{lemma}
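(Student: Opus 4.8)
The plan is to prove the biconditional locally, one cell at a time, by reading the same symmetric tiling in two ways near the $i$th diagonal cell: through its dominoes and through the half-paths produced by the bijection of Section \ref{sec.path}. Throughout I fix $i$ and invoke two structural facts. First, since the tiling is symmetric about the vertical diagonal, every domino is either one of a mirror pair of dominoes lying off the diagonal, or a self-symmetric horizontal domino straddling the diagonal; under the bijection the latter are exactly the $(1,1)$-steps of the full (symmetric) paths that cross the diagonal. Second, each of the $n$ symmetric paths meets the diagonal exactly once, and the left-half picture records this crossing as the path's endpoint in $V$: the endpoint is $v_{2k}$ when the path reaches the diagonal at the lattice point $v_{2k}$, and it is $v_{2k-1}$ when the path exits $\mathcal{DS}(n)$ along a self-symmetric $(1,1)$-step whose left endpoint is $v_{2k-1}$. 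Thus $V_0$ consists of precisely these $n$ crossing records.

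Next I would localize everything at the two vertices attached to cell $i$, namely $v_{2i-1}$ (the left-edge midpoint of the left black square, one unit left of the diagonal) and $v_{2i}$ (the left-edge midpoint of the right black square, on the diagonal). Membership of these two vertices in $V_0$ is dictated by the types of the steps incident to them, which in turn are dictated by the dominoes covering the two black squares of cell $i$ and their neighbours. I would enumerate the admissible symmetric local configurations of these dominoes and, for each, record (a) the number of complete dominoes contained in cell $i$, equivalently its assignment $0$ or $\pm 1$, and (b) the pattern recording whether $v_{2i-1} \in V_0$ and whether $v_{2i} \in V_0$. Comparing the two readings across the finitely many cases, the cell contains exactly one complete domino (assignment $0$) in precisely the configurations where $v_{2i-1}$ and $v_{2i}$ are both endpoints or both non-endpoints, whereas the mixed patterns correspond to a self-symmetric straddling domino occupying one black square together with a lattice-point crossing at the other, which forces the cell to contain two or zero complete dominoes (assignment $\pm 1$). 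This is exactly the claimed equivalence.

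I expect the main obstacle to be the faithful translation in step (b): one must track the direction conventions of the $(1,0)$-, $(0,1)$- and $(1,1)$-steps at $v_{2i-1}$ and $v_{2i}$, and in particular isolate the self-symmetric horizontal dominoes straddling the diagonal, since these are the only source of $v_{2i-1}$-type endpoints and are easily conflated with ordinary mirror pairs. A second delicate point is exhaustiveness: I must argue that the non-intersection condition, together with the symmetry, rules out the spurious local patterns (for instance simultaneous crossings at $v_{2i-1}$ and $v_{2i}$ that would force two paths to collide), so that the case list is complete and each listed case is realized by a genuine symmetric tiling. Once this local dictionary between the domino count in cell $i$ and the endpoint pattern at $(v_{2i-1}, v_{2i})$ is established and shown to be exhaustive, the lemma follows immediately by applying it at each $i$.
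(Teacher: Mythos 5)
Your proposal is correct and takes essentially the same route as the paper's proof: both arguments use the diagonal symmetry to force the lone complete domino of a $0$-cell to be a self-symmetric straddling (horizontal) one, then carry out the same finite local case analysis at $v_{2i-1}$ and $v_{2i}$, matching endpoint patterns against the number of complete dominoes in the cell (the paper's Figure \ref{fig.cell} records exactly the four configurations you describe). The only cosmetic difference is that you also check the two mixed patterns explicitly and conclude assignment $\pm 1$, which is just a contrapositive packaging of the converse that the paper proves by reading the forced local tilings directly off the both/neither endpoint hypotheses.
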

\begin{proof}
 We assume such a cell is assigned $0$, that is, it contains only one complete domino. Due to the symmetry, this domino is horizontal\footnote{If it were a vertical domino, then this cell must contain two complete vertical dominoes which contradicts to our assumption.}, which leads to the four possible cases shown in Figure \ref{fig.cell}. If the bottom (resp., top) half is covered by a horizontal domino, then there are two possible ways to cover the top left (resp., bottom left) corner in that cell; they are illustrated in the left (resp., right) two cases in Figure \ref{fig.cell}. According to the bijection between domino tilings of the Aztec diamond and families of non-intersecting Delannoy paths mentioned above, $v_{2i-1}$ and $v_{2i}$ are both (resp., neither) ending points of paths.

 Conversely, we first assume $v_{2i-1}$ and $v_{2i}$ are both ending points. According to the discussion about how we select the ending points, the fact that a path ends at $v_{2i-1}$ implies that the bottom half of this cell is covered by a horizontal domino. The path that ends at $v_{2i}$ produces a domino that is no completely contained in the cell (see left two cases in Figure \ref{fig.cell}).

 Next, we assume that neither of $v_{2i-1}$ and $v_{2i}$ are ending points. According to the bijection, no path ends at $v_{2i}$ implies that the top half of this cell is covered by a horizontal domino. Since no path ends at $v_{2i-1}$, there are to two possible tilings shown in the right two cases in Figure \ref{fig.cell}. In all these cases, the cell contains only one complete domino, and is therefore assigned $0$. This completes the proof.
\end{proof}
\begin{figure}[htb!]
    \centering
    \includegraphics[width=0.95\textwidth]{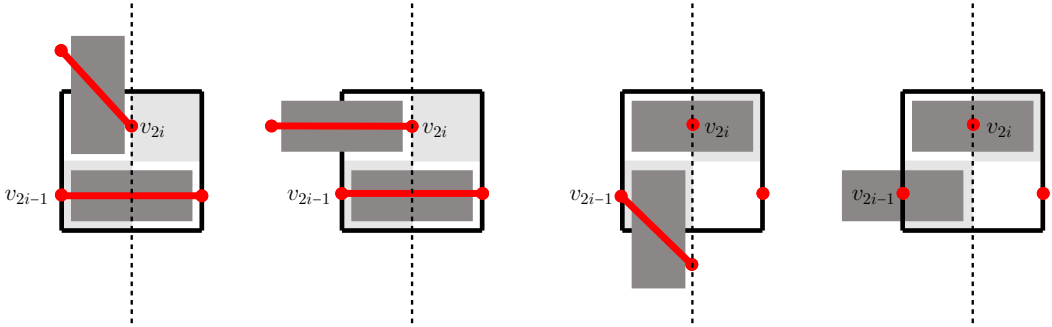}
    \caption{The four possible tilings of a cell which is assigned $0$ along the vertical diagonal and their corresponding paths.}
    \label{fig.cell}
\end{figure}

\subsection{Enumeration of families of non-intersecting lattice paths}\label{sec.LGV}

Throughout this paper, we write $\mathcal{G}$ for a locally finite, connected, directed acyclic graph, with a weight function $\wt$ that assigns elements in some commutative ring to each edge of $\mathcal{G}$. Let $U = \{u_1,\dotsc,u_n\}$ and $V = \{v_1,\dotsc,v_m\}, (m \geq n)$ be two sets of vertices of $\mathcal{G}$. The enumeration of families of non-intersecting paths connecting $U$ to $V$ has been studied by Lindstr{\"o}m \cite{Lin73} and later by Gessel and Viennot \cite{GV85} when $m=n$. Stembridge \cite{Stem90} generalized this result to the situation when the endpoints of paths are allowed to vary over a given set.

Two sets of vertices $U = \{u_1,\dotsc,u_n\}$ and $V = \{v_1,\dotsc,v_m\},(m \geq n)$ of $\mathcal{G}$ are said to be \textit{compatible} if, for any pair of paths $p_i$ from $u_i$ to $v_k$ and $p_j$ from $u_j$ to $v_{\ell}$ such that $i<j,k>\ell$, the paths $p_i$ and $p_j$ intersect. The key point for this condition is that when $U$ and $V$ are compatible, the $n$-tuples of non-intersecting paths only consist of paths connecting $u_i$ to $v_{j_i}$, where $1 \leq j_1 < j_2 < \dotsc < j_n \leq m$ for $i=1,\dotsc,n$.

We introduce the following notation.
\begin{itemize}
  \item $\mathscr{P}(u_i,v_j)$ denotes the set of paths going from $u_i \in U$ to $v_j \in V$.
  \item $\mathscr{P}(u_i,V)$ denotes the set of paths going from $u_i \in U$ to any $v \in V$.
  \item $\mathscr{P}(U,V)$ denotes the set of $n$-tuples of paths $(p_1,\dotsc,p_n)$, where $p_i \in \mathscr{P}(u_i,V)$ for $i=1,\dotsc,n$.
  \item $\mathscr{P}_0(U,V)$ is the subset of $\mathscr{P}(U,V)$ consisting of non-intersecting $n$-tuples of paths.
\end{itemize}
The weight of a path $p$ is defined to be $\wt(p) = \prod_{e}\wt(e)$, where the product is over all edges $e$ of the path $p$. For an $n$-tuple of paths $P=(p_1,\dotsc,p_n)$, the weight is given by $\wt(P) = \prod_{i=1}^{n} \wt(p_i)$. Given a set of ($n$-tuples of) paths $\mathscr{P}$, the total weight is given by  $GF(\mathscr{P}) = \sum_{P \in \mathscr{P}} \wt(P)$, the weighted sum of all the elements in $\mathscr{P}$.

The Lindstr{\"o}m--Gessel--Viennot theorem (Theorem \ref{thm.GV1}) gives the determinant formula for the number of families of non-intersecting paths $\mathscr{P}_0(U,V)$ when $|U|=|V|$. This is a powerful and elegant result with numerous applications in different contexts (see the survey paper written by Krattenthaler \cite[Section 10.13]{Kra17}).
\begin{theorem}[Lindstr{\"o}m~\cite{Lin73}; Gessel,Viennot~\cite{GV85}]\label{thm.GV1}
Consider a locally finite, connected, directed acyclic graph $\mathcal{G}$. Suppose $U = \{u_1,\dotsc,u_n\}$ and $V = \{v_1,\dotsc,v_m\}$ are two sets of vertices of $\mathcal{G}$ where $m=n$ and they are compatible. Then
  \begin{equation}\label{eq.GV2}
    GF(\mathscr{P}_{0}(U,V))  = \det \left( M \right),
  \end{equation}
    where the $(i,j)$-entry of the matrix $M$ is given by $GF(\mathscr{P}(u_i,v_j))$.
\end{theorem}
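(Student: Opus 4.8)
The plan is to prove the identity by the standard Lindstr\"om--Gessel--Viennot involution argument, transforming the determinant into a signed sum over arbitrary (possibly intersecting) tuples of paths and then cancelling the intersecting ones in pairs. First I would expand the determinant by the Leibniz formula and interpret each summand combinatorially. Since the $(i,j)$-entry of $M$ is $GF(\mathscr{P}(u_i,v_j))$, the product $\prod_{i=1}^{n} M_{i,\pi(i)}$ equals the total weight of all $n$-tuples of paths $(p_1,\dotsc,p_n)$ with $p_i$ running from $u_i$ to $v_{\pi(i)}$, with no non-intersection condition imposed. Writing $\mathscr{P}(U,V;\pi)$ for this set of tuples, I obtain
\begin{equation*}
  \det(M) = \sum_{\pi} \sgn(\pi)\,GF(\mathscr{P}(U,V;\pi)) = \sum_{\pi} \sum_{P \in \mathscr{P}(U,V;\pi)} \sgn(\pi)\,\wt(P),
\end{equation*}
where the outer sum is over all permutations $\pi$ of $[n]$. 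Each term carries the sign of its underlying permutation, and the goal is to show that everything cancels except the non-intersecting tuples.

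Next I would construct a sign-reversing, weight-preserving involution on the set of \emph{intersecting} pairs $(\pi,P)$. Because $\mathcal{G}$ is acyclic, I can fix a topological ordering of its vertices, which lets me canonically locate a collision: let $i$ be the smallest index such that $p_i$ meets some other path; let $x$ be the first vertex along $p_i$ that also lies on another path; and let $j$ be the smallest index with $j\neq i$ and $x \in p_j$. I then swap the tails of $p_i$ and $p_j$ after $x$, producing new paths $p_i'$ (the head of $p_i$ up to $x$ followed by the tail of $p_j$) and $p_j'$ (the head of $p_j$ up to $x$ followed by the tail of $p_i$). This operation replaces $\pi$ by $\pi \circ (i\,j)$, so $\sgn$ flips; the underlying multiset of edges is unchanged, so $\wt(P)$ is preserved; and acyclicity guarantees the swapped tails really are valid directed paths. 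Consequently all intersecting contributions cancel in pairs.

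The surviving terms are exactly the non-intersecting tuples. Here I would invoke compatibility: when $U$ and $V$ are compatible, any non-intersecting $n$-tuple must connect $u_i$ to $v_{j_i}$ with $1 \le j_1 < \dotsb < j_n \le m$, and since $m=n$ this forces $j_i = i$, i.e.\ $\pi = \mathsf{id}$ with positive sign. Hence the signed sum collapses to $\sum_{P \in \mathscr{P}_0(U,V)} \wt(P) = GF(\mathscr{P}_0(U,V))$, which is the claim. I expect the main obstacle to be verifying that the tail-swapping map is genuinely an involution: one must check that the canonical data $(i,x,j)$ is recovered after a second application, which relies on the initial segments up to $x$ and the union of visited vertices being unchanged by the swap. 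The local finiteness of $\mathcal{G}$ ensures the generating functions are well defined (formally, as elements of the ambient commutative ring), and acyclicity ensures that paths terminate so that first intersection points and tail swaps make sense.
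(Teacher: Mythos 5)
Your proposal is correct and is essentially the same argument the paper relies on: the paper itself does not reprove Theorem \ref{thm.GV1} (it cites Lindstr\"om and Gessel--Viennot), but the sign-reversing, weight-preserving tail-swap involution you construct is exactly the mechanism sketched in the paper's proof of Stembridge's Theorem \ref{thm.Stem}, differing only in the cosmetic choice of canonical intersection data (you take the lexicographically first triple $(i,x,j)$ along path order, while Stembridge's sketch picks the least intersection vertex in a fixed total order of vertices). Your cancellation, the well-definedness checks, and the use of compatibility with $m=n$ to force $\pi=\mathsf{id}$ are all sound.
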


We consider the following setting for Stembridge's generalization. On the graph $\mathcal{G}$, we assume that $U = \{u_1,\dotsc,u_n\}$ is the set of fixed starting points of paths, and $V = \{v_1,\dotsc,v_m\}$ is the set of all possible ending points of paths, where $m \geq n$. For each pair $(i,j), 1 \leq i < j \leq n$, we write $Q_V(u_i,u_j)$ for the total weight of the set $\mathscr{P}_0(\{u_i,u_j\},V)$, that is, the set of pairs of non-intersecting paths $(p_i,p_j)$ where $p_i \in \mathscr{P}(u_i,V)$ and $p_j \in \mathscr{P}(u_j,V)$.

For $1 \leq i < j \leq n$, the formula for $Q_V(u_i,u_j)$ is obtained from summing over all possible pairs of non-intersecting paths whose ending points are in $V$. Each such possible cases is given by the Lindstr{\"o}m--Gessel--Viennot theorem (Theorem \ref{thm.GV1}), which leads to the following double summation.
\begin{equation}\label{eq.Q}
  Q_V(u_i,u_j) = \sum_{1 \leq k < \ell \leq m} \det
  \begin{bmatrix}
  GF\left( \mathscr{P}(u_i,v_{k}) \right) & GF\left( \mathscr{P}(u_i,v_{\ell}) \right) \\
  GF\left( \mathscr{P}(u_j,v_{k}) \right) & GF\left( \mathscr{P}(u_j,v_{\ell}) \right)
  \end{bmatrix}.
\end{equation}
We set $Q_V(u_i,u_j) = 0$ if $i=j$ by convention. When $i>j$, one can extend $Q_V(u_i,u_j)$ to be $-Q_V(u_j,u_i)$, this relation can be verified easily from \eqref{eq.Q}. Clearly, the matrix $[Q_V(u_i,u_j)]_{1 \leq i,j \leq n}$ is skew-symmetric. We state and sketch the proof of Stembridge's generalization below (see \cite[Theorem 3.1]{Stem90}).

\begin{theorem}[Stembridge~\cite{Stem90}]\label{thm.Stem}
   On a locally finite, connected, directed acyclic graph $\mathcal{G}$, let $U = \{u_1,\dotsc,u_n\}$ be the set of fixed starting points of paths and $V = \{v_1,\dotsc,v_m\}$ be the set of all possible ending points of paths, where $m \geq n$. Assume $U$ and $V$ are compatible and $n$ is even. Then we have
   \begin{equation}\label{eq.Stem}
     GF(\mathscr{P}_0(U,V)) = \pf [Q_V(u_i,u_j)]_{1 \leq i,j \leq n}.
   \end{equation}
\end{theorem}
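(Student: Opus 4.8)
The plan is to prove \eqref{eq.Stem} by the sign-reversing tail-swapping involution that underlies the Lindström--Gessel--Viennot method, lifted from permutations to perfect matchings. First I would turn the right-hand side into a signed sum over explicit path configurations. Expanding the Pfaffian by its definition \eqref{eq.pf} and then replacing each entry $Q_V(u_i,u_j)$ by the determinantal double sum \eqref{eq.Q}, I obtain a sum over configurations $(\sigma,P)$, where $\sigma$ is a perfect matching of the starting-index set $\{1,\dots,n\}$ and $P=(p_1,\dots,p_n)$ is an $n$-tuple of paths with $p_i\in\mathscr{P}(u_i,V)$ whose two endpoints inside each block of $\sigma$ are distinct; the paths are otherwise unconstrained and may intersect. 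A configuration is weighted by $\wt(P)$ times the sign $\sgn(\sigma)\prod_{\mathrm{blocks}}\epsilon$, where for a block $\{i,j\}$ with $i<j$ the local factor $\epsilon$ is the sign $\sgn(\phi(j)-\phi(i))$ of the $2\times 2$ determinant in \eqref{eq.Q}, with $v_{\phi(i)}$ denoting the endpoint of $p_i$.

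Second, I would cancel every configuration in which two paths intersect. Fix a linear order on the vertices of $\mathcal{G}$; given a configuration with an intersection, let $v^{\ast}$ be its smallest crossing vertex and let $p_a,p_b$ (with $a<b$) be the two smallest-indexed paths through $v^{\ast}$. Swapping the portions of $p_a$ and $p_b$ after $v^{\ast}$ produces a new tuple $P'$ with the same edge multiset, hence the same weight, and exchanges the endpoints of $p_a$ and $p_b$. If $a$ and $b$ lie in the same block of $\sigma$, I keep $\sigma$; the single affected local factor $\epsilon$ flips, so the sign reverses. If $a$ and $b$ lie in different blocks, with $\sigma$-partners $c$ and $d$ respectively, I simultaneously replace the blocks $\{a,c\},\{b,d\}$ by $\{a,d\},\{b,c\}$; one checks that the combined change in $\sgn(\sigma)$ and in the two affected local factors again reverses the total sign. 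In both cases the map is weight-preserving, sign-reversing, and (because $v^{\ast}$ and the distinguished pair are preserved) an involution, and its only fixed points are the configurations whose $n$ paths are mutually non-intersecting.

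Third, I would evaluate the surviving sum. A fixed point is a non-intersecting $n$-tuple $P$ paired with an \emph{arbitrary} matching $\sigma$. By the compatibility of $U$ and $V$, such a family joins $u_i$ to $v_{j_i}$ with $j_1<\dots<j_n$, so the endpoint assignment is order-preserving and every local factor $\epsilon$ equals $+1$. Hence, for a fixed non-intersecting family, the configurations $(\sigma,P)$ contribute $\wt(P)\sum_{\sigma}\sgn(\sigma)$, and $\sum_{\sigma}\sgn(\sigma)$ is exactly the Pfaffian of the skew-symmetric matrix with all entries above the diagonal equal to $1$, which equals $1$. Summing over all non-intersecting families then yields $GF(\mathscr{P}_0(U,V))$, which is \eqref{eq.Stem}.

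The main obstacle is the sign analysis of the involution in the inter-block case: unlike the classical single-determinant setting of Theorem \ref{thm.GV1}, here the tail swap alone does not reverse the sign, and one must verify that the accompanying re-pairing of the matching, together with the change in the two local determinant factors, produces exactly one net sign change, uniformly over all crossing configurations. Establishing that this combined move is genuinely canonical---so that the smallest crossing vertex and the distinguished pair of paths are preserved, making the map a fixed-point-free involution off the non-intersecting families---is the other point requiring care, exactly as in the Gessel--Viennot argument but complicated by the variable endpoints and the extra matching structure.
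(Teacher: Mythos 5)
Your proposal is correct and takes essentially the same route as the paper's (Stembridge's) proof: expand the Pfaffian into path configurations, cancel all intersecting ones by the weight-preserving, sign-reversing tail-swap involution at the least crossing vertex between the two least-indexed paths, and evaluate the surviving non-intersecting terms using compatibility together with $\sum_{\sigma\in\mathscr{F}_n}\sgn(\sigma)=1$. Your block re-pairing $\{a,c\},\{b,d\}\mapsto\{a,d\},\{b,c\}$ is exactly the paper's operation of interchanging $a$ and $b$ in $\sigma$, so your more explicit bookkeeping via the local $2\times 2$-determinant sign factors from \eqref{eq.Q} is a finer-grained rendering of the same involution, not a different argument.
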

\begin{remark}\label{rmk.stem}
  Stembridge remarked that when $n$ is odd, one may add a phantom vertex $u_{n+1}$ to the vertex set of $\mathcal{G}$ with no incident edges. Consider the set of starting points $U = \{u_1,\dotsc,u_{n+1}\}$. Define $Q_V(u_i,u_{n+1}) = GF(\mathscr{P}(u_i,V))$ for each $i=1,\dotsc,n$. Then Theorem \ref{thm.Stem} provides a Pfaffian of order $n+1$ for $GF(\mathscr{P}_0(U,V))$.
\end{remark}
\begin{proof} (Sketch) We give the key ideas of the proof here since the proof of Lemma \ref{lemma.stem} follows similar ideas, see \cite[Theorem 3.1]{Stem90} for details. By \eqref{eq.pf}, we can write
  \begin{equation}\label{eq.pfgen1}
    \pf [Q_{V}(u_i,u_j)]_{1 \leq i,j \leq n} = \sum_{\sigma \in \mathscr{F}_n} \sgn(\sigma) \prod_{(i,j) \in \sigma} Q_{V}(u_i,u_j),
  \end{equation}
where the summation is over all perfect matchings of $[n]$. This can be interpreted as the weighted sum of $(n+1)$-tuples $(\sigma,p_1,\dotsc,p_n)$ in which $\sigma \in \mathscr{F}_n$, and if $(i,j) \in \sigma$, then $p_i$ and $p_j$ must not intersect. The weight of $(\sigma,p_1,\dotsc,p_n)$ is naturally given by $\sgn(\sigma) \wt(p_1)\dotsc\wt(p_n)$.

We describe below a clever weight-preserving and sign-reversing involution $\phi$ on $\mathscr{F}_n \times \mathscr{P}(U,V)$, which is a key part of Stembridge's proof of Theorem \ref{thm.Stem}.

First, choose a total order of the vertices of the graph. Given a configuration $(\sigma,p_1,\dotsc,p_n)$ with at least one pair of intersecting paths. Find the least vertex $w$ which is a vertex of intersection of paths; if there are more than two paths meeting at $w$, then select the two distinct paths $p_i$ and $p_j$ with the smallest indices. This makes $w$, $p_i$ and $p_j$ unique for each configuration.

Second, create a new path $p_i^{\prime}$ (resp., $p_j^{\prime}$) by concatenating the first half of $p_i$ (resp., $p_j$) up to $w$ and the second half of $p_j$ (resp., $p_i$) after the vertex $w$, this has the effect of interchanging the ending points of $p_i$ and $p_j$. Set $p_k^{\prime} = p_k$ if $k \neq i,j$, then we obtain the new configuration $(\sigma^{\prime},p_1^{\prime},\dotsc,p_n^{\prime})$, where $\sigma^{\prime}$ is obtained by interchanging $i$ and $j$ in $\sigma$. The involution $\phi$ is defined by
\begin{equation}\label{eq.involution}
  \phi(\sigma,p_1,\dotsc,p_n) = (\sigma^{\prime},p_1^{\prime},\dotsc,p_n^{\prime}).
\end{equation}

Stembridge showed that the involution $\phi$ is well-defined, weight-preserving and sign-reversing. As a consequence, the terms involving intersecting configurations in \eqref{eq.pfgen1} are all canceled out, and the only contribution comes from non-intersecting configurations $\mathscr{P}_{0}(U,V)$. Therefore,
  \begin{equation}\label{eq.pfgen2}
    \pf [Q_{V}(u_i,u_j)]_{1 \leq i,j \leq n} = GF(\mathscr{P}_0(U,V)) \sum_{\sigma \in \mathscr{F}_n} \sgn(\sigma) = GF(\mathscr{P}_0(U,V)),
  \end{equation}
because it turns out that $\sum_{\sigma}\sgn(\sigma) = 1$ when $\sigma$ runs over all perfect matchings of $[n]$ (see \cite[Proposition 2.3 (c)]{Stem90}).
\end{proof}

\section{Enumeration of off-diagonally symmetric domino tilings}\label{sec.enumerate}

Stembridge's theorem (Theorem \ref{thm.Stem}) does not apply directly in our situation because of the special requirement on the ending points stated in Lemma \ref{lemma.diag0}. We provide below a modification of it in Section \ref{sec.modstem}. The proof of the first main result (Theorem \ref{thm.main1}) is given in Section \ref{sec.pfthm1}.

\subsection{Modification of Stembridge's theorem}\label{sec.modstem}

On a locally finite, connected, directed acyclic graph $\mathcal{G}$, we assume that $U = \{u_1,\dotsc,u_n\}$ is the set of fixed starting points of paths and $V = \{v_1,\dotsc,v_m\}$ is the set of all possible ending points of paths, where $m = 2n$. Let $v_{\ell}^{*} = \{v_{2\ell-1},v_{2\ell}\}$ for each $\ell=1,\dotsc,n$ and let $V^{*} = \{v_1^{*},v_{2}^{*},\dotsc,v_n^{*}\}$. We define the set $\mathscr{P}(U,V^{*})$ to be the subset of $\mathscr{P}(U,V)$ where the two elements in $v_{\ell}^{*}$ are both the ending points of paths or neither the ending points of paths, for all $\ell=1,\dotsc,n$. Similarly, $\mathscr{P}_0(U,V^{*})$ is the subset of $\mathscr{P}(U,V^{*})$ consisting of non-intersecting paths. Note that if $n$ is odd, then $\mathscr{P}(U,V^{*}) = \emptyset$.

We write $Q_{V^{*}}(u_i,u_j)$ for the total weight of the set $\mathscr{P}_0 (\{u_i,u_j\},V^{*})$, that is, the set of pairs of non-intersecting paths $(p_i,p_j)$ where $p_i$ starts at $u_i$, $p_j$ starts at $u_j$, and the ending points of $p_i$ and $p_j$ are both in $v_{\ell}^{*}$ for some $\ell=1,\dotsc,n$. The formula of $Q_{V^{*}}(u_i,u_j)$ can be obtained by an argument similar to the one that led to \eqref{eq.Q}:
\begin{equation}\label{eq.Q*}
  Q_{V^{*}}(u_i,u_j) = \sum_{1 \leq \ell \leq n} \det
  \begin{bmatrix}
  GF\left( \mathscr{P}(u_i,v_{2\ell -1}) \right) & GF\left( \mathscr{P}(u_i,v_{2\ell}) \right) \\
  GF\left( \mathscr{P}(u_j,v_{2\ell -1}) \right) & GF\left( \mathscr{P}(u_j,v_{2\ell}) \right)
  \end{bmatrix}.
\end{equation}

We have the following modification of Stembridge's theorem (Theorem \ref{thm.Stem}).
\begin{lemma}\label{lemma.stem}
  On a locally finite, connected, directed acyclic graph $\mathcal{G}$, let $U = \{u_1,\dotsc,u_n\}$ be the set of fixed starting points of paths and $V = \{v_1,\dotsc,v_{2n}\}$ be the set of all possible ending points of paths. Assume $U$ and $V$ are compatible and $n$ is even. Then we have
   \begin{equation}\label{eq.Stem}
     GF(\mathscr{P}_0(U,V^{*})) = \pf [Q_{V^{*}}(u_i,u_j)]_{1 \leq i,j \leq n}.
   \end{equation}
\end{lemma}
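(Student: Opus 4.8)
The plan is to follow Stembridge's proof of Theorem~\ref{thm.Stem} almost verbatim, treating the pairing constraint encoded in $V^{*}$ as an extra bookkeeping condition that the involution must respect. First I would expand the right-hand side using the definition~\eqref{eq.pf} of the Pfaffian,
\[
\pf[Q_{V^{*}}(u_i,u_j)]_{1\le i,j\le n}=\sum_{\sigma\in\mathscr{F}_n}\sgn(\sigma)\prod_{(i,j)\in\sigma}Q_{V^{*}}(u_i,u_j),
\]
and then substitute the formula~\eqref{eq.Q*} for each factor, expanding every $2\times 2$ determinant. This realises the Pfaffian as a signed weighted sum over configurations $(\sigma,p_1,\dots,p_n)$ with $\sigma\in\mathscr{F}_n$ and $(p_1,\dots,p_n)\in\mathscr{P}(U,V)$, subject to the \emph{admissibility} condition that for every block $(i,j)\in\sigma$ the endpoints of $p_i$ and $p_j$ are exactly the two vertices of a single pair $v_{\ell}^{*}$. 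The combined sign of such a configuration is $\sgn(\sigma)$ times the product of the within-block determinant signs, exactly as in \cite[Section 3]{Stem90}.

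Second, I would apply Stembridge's involution $\phi$ from~\eqref{eq.involution}: fix a total order on the vertices, locate the least intersection vertex $w$, select the two intersecting paths $p_i,p_j$ of smallest index meeting at $w$, swap their tails beyond $w$, and transpose $i,j$ in $\sigma$. The weight-preserving and sign-reversing properties are inherited unchanged from Stembridge, since $\phi$ merely interchanges two endpoints and transposes the two corresponding matched indices. The one genuinely new point, and the step I expect to be the main obstacle, is to check that $\phi$ \emph{preserves admissibility}. The key observation is that $\phi$ leaves the multiset of endpoints fixed and only reassigns which path carries which endpoint. Writing $e_k$ for the endpoint of $p_k$: if $p_i,p_j$ lie in the same block, that block keeps the same pair of endpoints. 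If they lie in distinct blocks $\{i,a\}$ and $\{j,b\}$, with $\{e_i,e_a\}=v_{\ell_1}^{*}$ and $\{e_j,e_b\}=v_{\ell_2}^{*}$, then after the swap $p_i$ ends at $e_j$ and $p_j$ ends at $e_i$, so the new blocks $\{a,j\}$ and $\{i,b\}$ carry the endpoint sets $\{e_a,e_i\}=v_{\ell_1}^{*}$ and $\{e_j,e_b\}=v_{\ell_2}^{*}$; admissibility persists. Hence $\phi$ restricts to a weight-preserving, sign-reversing involution on the admissible configurations, and all intersecting configurations cancel in pairs.

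Finally, I would identify the fixed points (the non-intersecting admissible configurations) and evaluate their total contribution; this is where the modified statement diverges from the original. Because each $Q_{V^{*}}$ only allows endpoints drawn from a single pair, a non-intersecting family $P\in\mathscr{P}_0(U,V^{*})$ is admissible for a \emph{unique} matching. Indeed, by compatibility the endpoints of $P$ occur in increasing index order along $u_1,\dots,u_n$, and since each pair $v_\ell^{*}$ consists of the consecutive indices $2\ell-1,2\ell$, the sorted endpoints group as $v_{2\ell_1-1}<v_{2\ell_1}<v_{2\ell_2-1}<v_{2\ell_2}<\cdots$; thus the endpoints of $p_{2k-1}$ and $p_{2k}$ always form one pair, forcing $\sigma=\{(1,2),(3,4),\dots,(n-1,n)\}$, whose sign together with the within-block determinant signs equals $+1$. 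Consequently each $P\in\mathscr{P}_0(U,V^{*})$ survives with weight $+\wt(P)$, giving $\pf[Q_{V^{*}}(u_i,u_j)]=\sum_{P}\wt(P)=GF(\mathscr{P}_0(U,V^{*}))$, as claimed. In contrast, Stembridge's original argument needed the identity $\sum_{\sigma}\sgn(\sigma)=1$ precisely because there many matchings contribute; in the paired setting the uniqueness of the admissible matching replaces that step.
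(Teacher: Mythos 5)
Your proposal is correct and follows the same route as the paper: expand the Pfaffian via \eqref{eq.Q*} into signed configurations $(\sigma,p_1,\dotsc,p_n)$, run Stembridge's tail-swapping involution \eqref{eq.involution}, and verify that it respects the pairing constraint imposed by $V^{*}$. Your endpoint bookkeeping --- blocks $\{i,a\}$ and $\{j,b\}$ of $\sigma$ becoming $\{j,a\}$ and $\{i,b\}$ of $\sigma'$, with the endpoint sets $v_{\ell_1}^{*}$ and $v_{\ell_2}^{*}$ simply reattached to the new blocks --- is precisely the paper's well-definedness argument (your same-block case is vacuous in the paper's presentation, where matched pairs are non-intersecting by the definition of $Q_{V^{*}}$, but it is consistent with your fuller determinant-expanded setup). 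The one place you genuinely diverge is the evaluation of the surviving configurations, and there your version is the sharper one. The paper concludes, as in \eqref{eq.pfgen3}, by reusing Stembridge's factorization $GF(\mathscr{P}_0(U,V^{*}))\sum_{\sigma\in\mathscr{F}_n}\sgn(\sigma)$ together with $\sum_{\sigma}\sgn(\sigma)=1$; but in the $V^{*}$ setting a non-intersecting family does not pair with every matching $\sigma$. As you observe, compatibility forces the endpoints of a non-intersecting family into increasing order, the used pairs $v_{\ell}^{*}$ occupy consecutive positions among them, and hence the unique admissible matching is $\{(1,2),(3,4),\dotsc,(n-1,n)\}$, whose permutation sign is $+1$; moreover the within-block determinant signs are positive, since for $i<j$ a non-intersecting pair must (again by compatibility) end at the smaller-indexed and larger-indexed vertex of its pair respectively. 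So each $P\in\mathscr{P}_0(U,V^{*})$ survives exactly once with weight $+\wt(P)$, which gives the claim directly. The paper's factorized display is slightly loose as a literal description of the surviving terms, though it yields the same value because both the unique matching's combined sign and $\sum_{\sigma}\sgn(\sigma)$ equal $1$; your closing remark, that uniqueness of the admissible matching replaces the $\sum_{\sigma}\sgn(\sigma)=1$ step of Theorem \ref{thm.Stem}, is exactly the right way to articulate where Lemma \ref{lemma.stem} differs from the unmodified theorem.
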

\begin{proof}
The proof of Lemma \ref{lemma.stem} is almost the same as the proof of Stembridge's theorem stated here as Theorem \ref{thm.Stem}; the only difference is that the set $V$ is replaced by $V^{*}$. Similarly, we can interpret $\pf [Q_{V^{*}}(u_i,u_j)]_{1 \leq i,j \leq n}$ as the weighted sum of $(n+1)$-tuples $(\sigma,p_1,\dotsc,p_n)$ in which $\sigma$ is a perfect matching of $[n]$, and if $(i,j) \in \sigma$, then $p_i$ and $p_j$ must not intersect; the additional condition is that the ending points of $p_i$ and $p_j$ must be in $v_{\ell}^{*}$ for some $\ell$.

Next, apply the involution $\phi$ mentioned in the proof of Theorem \ref{thm.Stem}; then we obtain a new configuration $(\sigma^{\prime},p_1^{\prime},\dotsc,p_n^{\prime})$, where $\sigma^{\prime}$ is obtained by interchanging $i$ and $j$ in $\sigma$. We only need to show that under this involution, if $(i_0, j_0) \in \sigma^{\prime}$, then the ending points of $p_{i_0}^{\prime}$ and $p_{j_0}^{\prime}$ lie in $v_{\ell}^{*}$ for some $\ell$.

The only cases in which this is not clear are those involving the modified paths $p_{i}^{\prime}$ and $p_{j}^{\prime}$. If $(k, j) \in \sigma^{\prime}$ for any $k \neq i,j$, then $(k,i) \in \sigma$. This implies that the ending points of $p_k$ and $p_i$ lie in $v_{\ell}^{*}$ for some $\ell$. After applying the involution, the ending point of the path $p_i$ becomes the ending point of the path $p_j^{\prime}$. Thus, the ending points of $p_k^{\prime}$ and $p_j^{\prime}$ are both in $v_{\ell}^{*}$ for some $\ell$, as desired. One may similarly argue that $(k, i) \in \sigma^{\prime}$ implies $p_k^{\prime}$ and $p_i^{\prime}$ are both in $v_{\ell}^{*}$ for some $\ell$. This shows the well-definedness of the involution on $\mathscr{P}(U,V^{*})$.

Following the same argument of the proof of Theorem \ref{thm.Stem}, we obtain the desired conclusion
  \begin{equation}\label{eq.pfgen3}
    \pf [Q_{V^{*}}(u_i,u_j)]_{1 \leq i,j \leq n} = GF(\mathscr{P}_0(U,V^{*})) \sum_{\sigma \in \mathscr{F}_n} \sgn(\sigma) = GF(\mathscr{P}_0(U,V^{*})).
  \end{equation}
This completes the proof of our modification of Stembridge's theorem.
\end{proof}

\subsection{Proof of Theorem \ref{thm.main1}}\label{sec.pfthm1}

We remind the reader that $O(n;I)$ is the set of off-diagonally symmetric domino tilings of $AD(n)$ with unit squares removed from the southwestern side except for those labeled $I = \{i_1,\dotsc,i_r\}$, where $1 \leq i_1 < \dotsc < i_r \leq n$. We continue with the same labeling of points on the graph $\mathcal{DS}(n)$ mentioned in Section \ref{sec.path} and keep the same notations of paths given in Sections \ref{sec.LGV} and \ref{sec.modstem}. It is not hard to see that $\mathcal{DS}(n)$ (edges are oriented in the same way as the triangular lattice $\mathcal{T}$) is directed acyclic and that two sets $U$ and $V$ are compatible.

\begin{proof}[Proof of Theorem \ref{thm.main1}]
  Suppose all the edge weights on the graph $\mathcal{DS}(n)$ are equal to $1$. We first assume $I = [n]$, that is, no unit square is removed from the southwestern boundary of $AD(n)$.

  From the discussion of Section \ref{sec.path} and Lemma \ref{lemma.diag0}, there is a bijection between the set $O(n;I)$ and the set of families of non-intersecting Delannoy paths in $\mathcal{DS}(n)$ whose starting points are in $U=\{u_{1},\dotsc,u_{n}\}$ and ending points are in pairs in some $v^{*}_{\ell} = \{v_{2\ell-1},v_{2\ell}\}$. That is, the set $\mathscr{P}_0(U,V^{*})$. Note that $\mathscr{P}_0(U,V^{*}) = \emptyset$ if $n$ is odd, so we assume $n$ is even.

  Let $\mathcal{G} = \mathcal{DS}(n)$ in Lemma \ref{lemma.stem}, then we obtain
  \begin{equation}\label{eq.pfmain1}
    |O(n;I)| = \left| \mathscr{P}_0(U,V^{*}) \right| = \pf [Q_{V^{*}}(u_i,u_j)]_{1 \leq i,j \leq n}.
  \end{equation}
  For convenience, let $A$ be the skew-symmetric matrix $[Q_{V^{*}}(u_i,u_j)]_{1 \leq i,j \leq n}$.

  Next, assume $I = \{i_1,\dotsc,i_r\}$, where $1 \leq i_1 < \dotsc < i_r \leq n$. In this case, $n-r$ unit squares are removed from the southwestern boundary. If the unit square labeled $\ell$ is removed, then $u_{\ell}$ is not the starting point of a path (see Figure \ref{fig.offdiagdentpath}). So, the set of starting points becomes $\{u_{i_1},\dotsc,u_{i_r} \}$. This leads to the bijection between the set $\mathscr{P}_0(\{u_{i_1},\dotsc,u_{i_r} \},V^{*})$ and the set $O(n;I)$.

  Similarly, by Lemma \ref{lemma.stem} with the set $U = \{u_{i_1},\dotsc,u_{i_r}\}$, then we have
  \begin{equation}\label{eq.pfmain2}
    |O(n;I)| = \left| \mathscr{P}_0(\{u_{i_1},\dotsc,u_{i_r} \},V^{*}) \right| = \pf (A_I),
  \end{equation}
  where $A_I$ is obtained from $A$ by selecting rows and columns indexed by $I = \{i_1,\dotsc,i_r\}$. This completes the proof of Theorem \ref{thm.main1}.
\end{proof}
\begin{remark}\label{rmk.diag}
We write $D(n;I)$ for the set of diagonally symmetric domino tilings of $AD(n)$ with unit squares removed from the southwestern side except for those labeled $I = \{i_1,\dotsc,i_r\}$, where $1 \leq i_1 < \dotsc < i_r \leq n$. There is a bijection between the set $D(n;I)$ and the set $\mathscr{P}_0(\{u_{i_1},\dotsc,u_{i_r} \},V)$ on the graph $\mathcal{DS}(n)$. So $|D(n;I)|$ has a similar Pfaffian expression which follows from the direct application of Stembridge's theorem (Theorem \ref{thm.Stem}). Then we have
\begin{equation}\label{eq.diag1}
  |D(n;I)| = \left| \mathscr{P}_0(\{u_{i_1},\dotsc,u_{i_r} \},V) \right| = \pf (B_I),
\end{equation}
where the matrix $B = [Q_V(u_i,u_j)]_{1 \leq i,j \leq n}$. This expression holds for all $n$ (even when $n$ is odd); see Remark \ref{rmk.stem}.

Our data shows that entries of the matrix $B$ are not so attractive (see \eqref{eq.matrixB} for $n=8$) and $\pf(B_I)$ does not factor into small primes for a given set $I$.
\begin{equation}\label{eq.matrixB}
B_{[8]}=
  \begin{bmatrix}
    0 & 6 & 18 & 46 & 114 & 278 & 674 & 1630 \\
    -6 & 0 & 18 & 70 & 202 & 526 & 1314 & 3222 \\
    -18 & -18 & 0 & 94 & 378 & 1134 & 3042 & 7742 \\
    -46 & -70 & -94 & 0 & 466 & 1966 & 6114 & 16830 \\
    -114 & -202 & -378 & -466 & 0 & 2438 & 10530 & 33502 \\
    -278 & -526 & -1134 & -1966 & -2438 & 0 & 12962 & 56982 \\
    -674 & -1314 & -3042 & -6114 & -10530 & -12962 & 0 & 69950 \\
    -1630 & -3222 & -7742 & -16830 & -33502 & -56982 & -69950 & 0
  \end{bmatrix}.
\end{equation}

However, the entries of the matrix $A$ look more interesting, we will analyze them and prove our main results in the next section.

\end{remark}

\section{Recursive entries of the matrix}\label{sec.rec}

Throughout this section, all the edge weights are equal to $1$. In order to analyze the entries $a_{i,j} = Q_{V^{*}}(u_i,u_j)$ of the matrix $A$, we begin with some auxiliary results on counting families of non-intersecting Delannoy paths (see Section \ref{sec.aux}). In Section \ref{sec.pfthm2}, we will use these lemmas to prove Theorem \ref{thm.main2} and Corollary \ref{cor.main}.

\subsection{Auxiliary results}\label{sec.aux}

The graph $\mathcal{DS}(n)$ that we previously considered has no edge connecting $u_{i}$ and $u_{i+1}$. We consider the graph $\overline{\mathcal{DS}}(n)$ obtained from $\mathcal{DS}(n)$ by adding edges connecting $u_{i}$ and $u_{i+1}$ for $i=1,\dotsc,n-1$; these edges are oriented southeast (see Figure \ref{fig.tri-inv} for $\overline{\mathcal{DS}}(10)$). It would be convenient to re-label the points on the southwestern boundary of $\overline{\mathcal{DS}}(n)$ by $x_1,x_2,\dotsc,x_n$, from bottom to top. The points that are adjacent to $x_i$ (except $x_{i-1}$ and $x_{i+1}$) are labeled by $y_{i-1}$ and $y_i$ for $i>1$ from bottom to top (see Figure \ref{fig.tri-par} for an illustration).

Notice that a Delannoy path starting at the point $x_i$ can end at a point from $v_1$ to $v_{2i}$. The number of Delannoy paths going from $x_i$ to $v_{\ell}$ is given by the Delannoy number (see \eqref{eq.Dclosed})
\begin{equation}\label{eq.aux1}
  |\mathscr{P}(x_i,v_{\ell})| = d_{\floor{\frac{2i-\ell}{2}},\floor{\frac{\ell}{2}}},
\end{equation}
where $1 \leq \ell \leq 2i$.
\begin{figure}[htb!]
    \centering
    \subfigure[]
    {\label{fig.tri-inv}\includegraphics[width=0.28\textwidth]{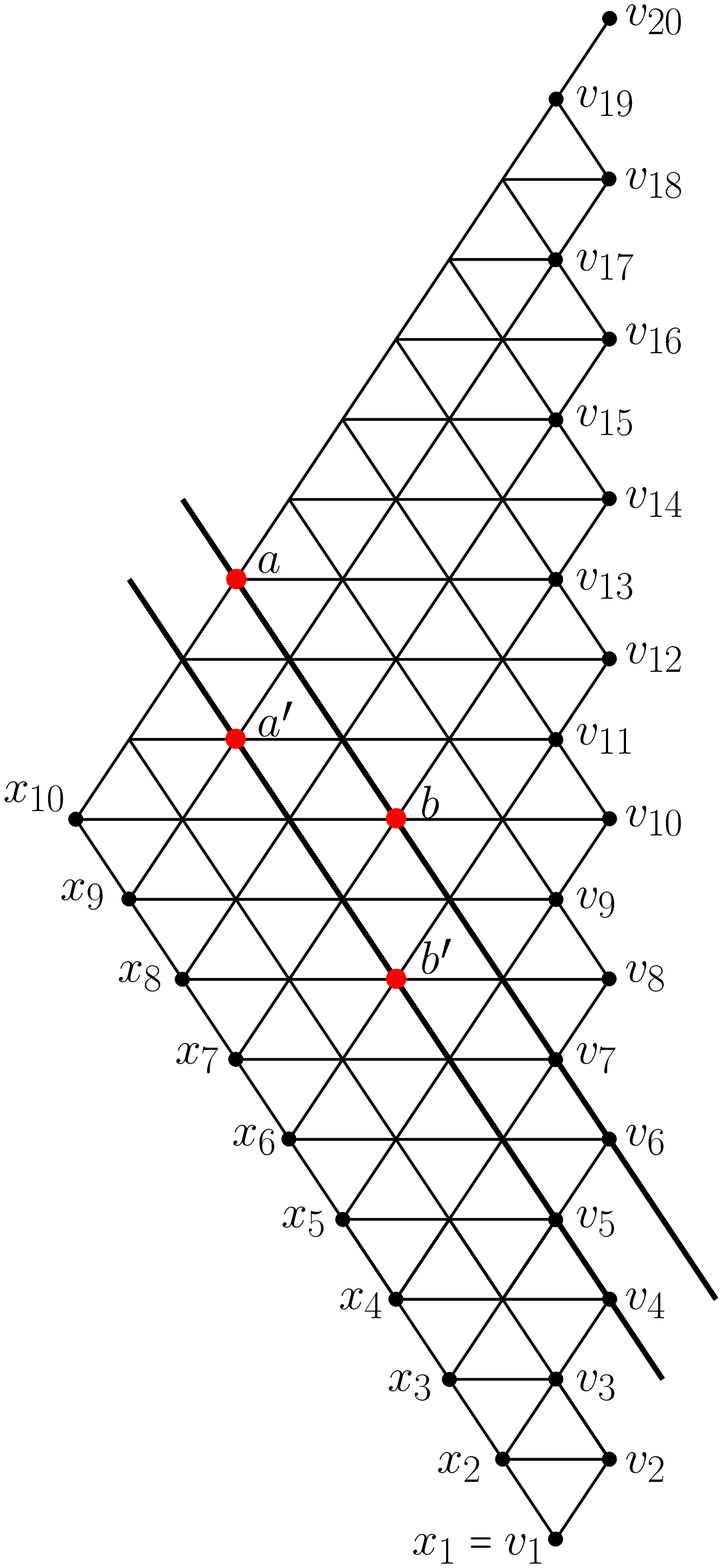}}
    \hspace{15mm}
    \subfigure[]
    {\label{fig.tri-par}\includegraphics[width=0.28\textwidth]{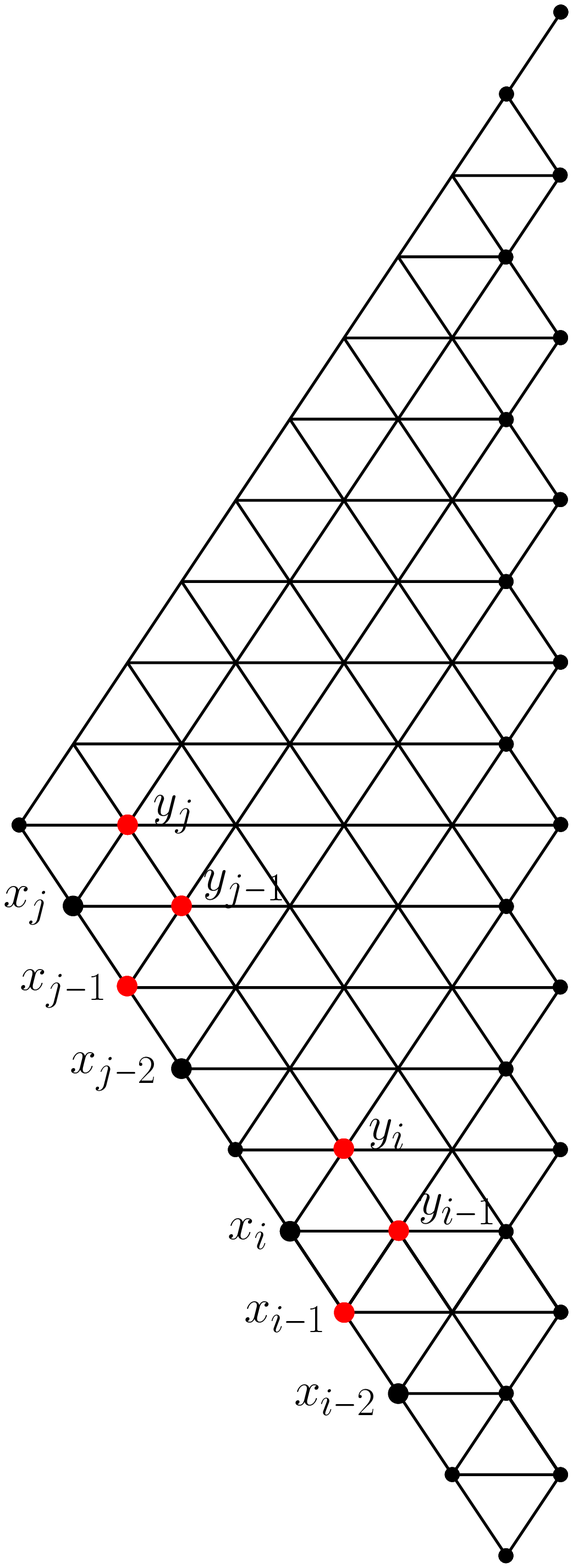}}
    \caption{(a) The graph $\overline{\mathcal{DS}}(10)$ with an illustration of points $a,b,a^{\prime},b^{\prime}$ in Lemma \ref{lemma.aux1}. (b) Partition of paths mentioned in Proposition \ref{prop.aux}.}
    \label{fig.tri1}
\end{figure}

Our first lemma shows that the number of pairs non-intersecting paths in $\overline{\mathcal{DS}}(n)$ with two starting points on the same lattice line $y = \ell$ is invariant under a certain translation of these two starting points. It is stated below.
\begin{lemma}\label{lemma.aux1}
  On the graph $\overline{\mathcal{DS}}(n)$, let $a = (p,\ell)$ and $b = (q,\ell)$ be two distinct points on the lattice line $y = \ell$. Let $a^{\prime} = (p+1,\ell-1)$ and $b^{\prime} = (q+1,\ell-1)$ be the points obtained from $a$ and $b$ by shifting downward one lattice line (we assume that $a^{\prime}$ and $b^{\prime}$ are still contained in $\overline{\mathcal{DS}}(n)$; see Figure \ref{fig.tri-inv}). Then
  \begin{equation}\label{eq.aux2}
    Q_{V^{*}}(a,b) = Q_{V^{*}}(a^{\prime},b^{\prime}).
  \end{equation}
\end{lemma}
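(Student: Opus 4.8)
The plan is to prove \eqref{eq.aux2} by a single explicit translation bijection that reduces both sides to the same count of non-intersecting pairs of Delannoy paths. Write $T$ for the lattice translation $T(x,y)=(x+1,y-1)$. By construction $T(a)=a'$ and $T(b)=b'$, and since $T$ carries each of the three admissible steps $(1,0),(0,1),(1,1)$ to a step of the same type, it maps Delannoy paths to Delannoy paths bijectively and preserves the property of two paths being non-intersecting. Recall from the definition of $Q_{V^{*}}$ (and from \eqref{eq.Q*}) that $Q_{V^{*}}(a,b)=\sum_{m=1}^{n}N_m(a,b)$, where $N_m(a,b)$ denotes the number of non-intersecting pairs of Delannoy paths in $\overline{\mathcal{DS}}(n)$ that start at $\{a,b\}$ and have \emph{both} endpoints in the single pair $v_m^{*}=\{v_{2m-1},v_{2m}\}$; each $N_m(a,b)$ is exactly the $2\times 2$ Lindström--Gessel--Viennot determinant appearing in \eqref{eq.Q*}.

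The crux is a self-similarity of the target pairs under $T$. Using the coordinates of the $v_\ell$ implicit in \eqref{eq.aux1}, the pair $v_m^{*}$ consists of the two points $(c-m,\,m-1)$ and $(c-m,\,m)$ for a fixed constant $c$ (they sit one $(0,1)$-step apart along the diagonal $x+y=c$ on which $V$ is arranged). A direct check then gives $T(v_{2m-1})=v_{2m-3}$ and $T(v_{2m})=v_{2m-2}$, so that $T(v_m^{*})=v_{m-1}^{*}$: the translation preserves the odd/even (left-of-diagonal versus on-diagonal) role of each endpoint and shifts the pair index down by one. Consequently $T$ restricts to a bijection between non-intersecting pairs from $\{a,b\}$ to $v_m^{*}$ and non-intersecting pairs from $\{a',b'\}$ to $v_{m-1}^{*}$, that is, $N_m(a,b)=N_{m-1}(a',b')$ for every $m$.

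Summing this identity yields $Q_{V^{*}}(a,b)=\sum_{m=1}^{n}N_{m-1}(a',b')=\sum_{k=0}^{n-1}N_k(a',b')$, whereas $Q_{V^{*}}(a',b')=\sum_{k=1}^{n}N_k(a',b')$. Hence \eqref{eq.aux2} is equivalent to the vanishing of the two boundary terms $N_0(a',b')$ and $N_n(a',b')$. To settle these I would invoke two monotonicity facts that hold because every admissible step has nonnegative increments in both coordinates: along any Delannoy path both the $x$- and the $y$-coordinate are non-decreasing, and $T$ preserves the anti-diagonal value $x+y$ (so no translated path ever crosses the bounding diagonal $x+y=c$). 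Since $a',b'$ lie on the line $y=\ell-1\ge 0$, no path from them can reach the pair $v_0^{*}$, one of whose points has $y=-1$ and therefore lies below the bottom boundary; and since $v_n^{*}$ sits on the left boundary $x=0$ of the region while $a',b'$ have $x$-coordinate at least $1$, monotonicity of $x$ makes $v_n^{*}$ unreachable from the shifted starting points. Thus $N_0(a',b')=N_n(a',b')=0$ and the two displayed sums agree.

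I expect the genuine work to lie entirely in this last step: making precise both that the translated family still lies inside $\overline{\mathcal{DS}}(n)$ and that the two extreme pairs contribute nothing. The clean route is to describe $\overline{\mathcal{DS}}(n)$ as the lattice triangle cut out by $x\ge 0$, $y\ge 0$, $x+y\le c$, and to note that $T$ preserves $x+y$, raises $x$ by one, and lowers $y$ by one; combined with coordinate-monotonicity of Delannoy paths, this simultaneously gives containment of the image paths (and of the inverse images under $T^{-1}$) and the unreachability of $v_0^{*}$ and $v_n^{*}$. Granting that, the step-type preservation and the identity $T(v_m^{*})=v_{m-1}^{*}$ are routine, and \eqref{eq.aux2} follows.
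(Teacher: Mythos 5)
Your proposal is correct and takes essentially the same route as the paper: your translation $T(x,y)=(x+1,y-1)$ together with the key identity $T(v_m^{*})=v_{m-1}^{*}$ is exactly the paper's observation that $|\mathscr{P}(a,v_{i+2})|=|\mathscr{P}(a^{\prime},v_{i})|$ for all $i$, i.e., that the vectors of path counts shift by precisely one endpoint pair, so the $2\times 2$ determinants in \eqref{eq.Q*} coincide term by term. Your explicit vanishing of the two boundary terms $N_0(a^{\prime},b^{\prime})$ and $N_n(a^{\prime},b^{\prime})$ (via coordinate monotonicity of Delannoy steps) is the bijective rephrasing of the zeros at the two ends of the count table in the paper's proof, so the arguments differ only in presentation, not in substance.
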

\begin{proof}
Without loss of generality assume $p<q$. We observe that the number of paths going from $a$ to $v_{i+2}$ is the same as the number of paths going from $a^{\prime}$ to $v_{i}$ for all $i$. In other words, $|\mathscr{P}(a,v_{i+2})| = |\mathscr{P}(a^{\prime},v_{i})|$ for all $i$. Similarly, $|\mathscr{P}(b,v_{i+2})| = |\mathscr{P}(b^{\prime},v_{i})|$ for all $i$.

We list the numbers of paths going from these four points $a,b,a^{\prime}$ and $b^{\prime}$ to any $v \in V$ in Table \ref{tab.auxgv1}, where $a_{k}$ is non-zero for $k=1,\dotsc,\ell$, and $b_{k}$ is non-zero for $k=1,\dotsc,k_0$ for some $k_0 < \ell$ (depending on the position of $b$). The numbers of paths going from $a^{\prime}$ and $b^{\prime}$ to any $v \in V$ are obtained from the numbers of paths going from $a$ and $b$ to any $v \in V$ by shifting two entries to their left in Table \ref{tab.auxgv1}.
\begin{table}[]
\centering
\begin{tabular}{c|c|cc|cc|cc|c|cc|cc|cc|c}
    & $\dots$ & $v_{2i-3}$ & $v_{2i-2}$ & $v_{2i-1}$ & $v_{2i}$ & $v_{2i+1}$ & $v_{2i+2}$ & \dots & $v_{2j-3}$ & $v_{2j-2}$ & $v_{2j-1}$ & $v_{2j}$ & $v_{2j+1}$ & $v_{2j+2}$ & $\dots$\\
  \hline
  $a$ & $\dots$ & 0 & 0 & 0 & $a_1$ & $a_2$ & $a_3$ & $\dots$ & $a_{\ell-3}$ & $a_{\ell-2}$ & $a_{\ell-1}$ & $a_{\ell}$ & 0 & 0 & $\dots$ \\
  $b$ & $\dots$ & 0 & 0 & 0 & $b_1$ & $b_2$ & $b_3$ & $\dots$ & $b_{\ell-3}$ & $b_{\ell-2}$ & $b_{\ell-1}$ & $b_{\ell}$ & 0 & 0 & $\dots$ \\
  \hline
  $a^{\prime}$ & $\dots$ & 0 & $a_1$ & $a_2$ & $a_3$ & $a_4$ & $a_5$ & $\dots$ & $a_{\ell-1}$ & $a_{\ell}$ & 0 & 0 & 0 & 0 & $\dots$ \\
  $b^{\prime}$ & $\dots$ & 0 & $b_1$ & $b_2$ & $b_3$ & $b_4$ & $b_5$ & $\dots$ & $b_{\ell-1}$ & $b_{\ell}$ & 0 & 0 & 0 & 0 & $\dots$ \\
\end{tabular}
\vspace{0.3cm}
\caption{We assume $i<j$. In the first two rows, the numbers of paths going from $a$ and $b$ to some $v \in V$ in the graph $\overline{\mathcal{DS}}(n)$ are illustrated by $a_k$ and $b_k$ ($k=1,2,\dotsc,\ell$), respectively. The last two rows show the numbers of paths going from $a^{\prime}$ and $b^{\prime}$ to some $v \in V$ in the same graph. We display the columns in this way so that each $2 \times 2$ array is the matrix shown in the summand of \eqref{eq.Q*}.}\label{tab.auxgv1}
\end{table}

It follows from \eqref{eq.Q*} that $Q_{V^{*}}(a,b)$ is the summation of determinants of $2 \times 2$ matrices which are illustrated as $2 \times 2$ arrays in Table \ref{tab.auxgv1}. The non-zero $2 \times 2$ arrays coming from $a,b$ and $a^{\prime},b^{\prime}$ are identical. Summing over all determinants of these $2 \times 2$ arrays, we obtain the desired identity $Q_{V^{*}}(a,b) = Q_{V^{*}}(a^{\prime},b^{\prime})$.
\end{proof}

Next, we provide the result of $Q_{V^{*}}(x_i,x_j)$ when $i=1$ and $i=2$ in the following lemma.
\begin{lemma}\label{lemma.aux2}
  On the graph $\overline{\mathcal{DS}}(n)$, we have
  \begin{enumerate}
    \item $Q_{V^{*}}(x_1,x_j) = 2(j-1)$ for $j>1$.
    \item $Q_{V^{*}}(x_2,x_j) = 2(j-2)(j-1)$ for $j>2$.
    \item $Q_{V^{*}}(x_2,x_j) = Q_{V^{*}}(x_2,x_{j-1}) + Q_{V^{*}}(x_1,x_{j-1}) + Q_{V^{*}}(x_1,x_{j})-2$ for $j>2$.
  \end{enumerate}
\end{lemma}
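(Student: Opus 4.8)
The plan is to prove parts (1) and (2) by a direct evaluation of the defining sum \eqref{eq.Q*}, and then to deduce part (3) by substituting the resulting closed forms. The essential input is the explicit path count \eqref{eq.aux1}, which on $\overline{\mathcal{DS}}(n)$ expresses each $|\mathscr{P}(x_i,v_\ell)|$ as a single Delannoy number. The key simplification is that a path started at $x_i$ can reach only the endpoints $v_1,\dotsc,v_{2i}$, so in the sum \eqref{eq.Q*} for $Q_{V^{*}}(x_i,x_j)$ (with $i<j$) the top ($x_i$) row of the $\ell$-th $2\times 2$ block vanishes whenever $\ell>i$. Hence for $i=1$ only the term $\ell=1$ survives, and for $i=2$ only the terms $\ell=1,2$ survive, reducing each computation to one or two explicit $2\times 2$ determinants.

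For part (1) I would record the two relevant values for $x_1$, namely $|\mathscr{P}(x_1,v_1)|=d_{0,0}=1$ and $|\mathscr{P}(x_1,v_2)|=d_{0,1}=1$, together with $|\mathscr{P}(x_j,v_1)|=d_{j-1,0}=1$ and $|\mathscr{P}(x_j,v_2)|=d_{j-1,1}$. Using \eqref{eq.Dclosed} (or the recurrence \eqref{eq.Drec}) one gets $d_{j-1,1}=2(j-1)+1=2j-1$, so the single surviving determinant equals $(2j-1)-1=2(j-1)$, which is the claim. Part (2) is the same computation carried one step further: for $x_2$ one has $|\mathscr{P}(x_2,v_1)|=1$, $|\mathscr{P}(x_2,v_2)|=d_{1,1}=3$, $|\mathscr{P}(x_2,v_3)|=1$, $|\mathscr{P}(x_2,v_4)|=1$, and for $x_j$ one needs in addition $|\mathscr{P}(x_j,v_3)|=d_{j-2,1}=2j-3$ and $|\mathscr{P}(x_j,v_4)|=d_{j-2,2}=2(j-2)^2+2(j-2)+1$. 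The $\ell=1$ determinant contributes $(2j-1)-3=2(j-2)$ and the $\ell=2$ determinant contributes $d_{j-2,2}-(2j-3)=2(j-2)^2$; adding these gives $2(j-2)+2(j-2)^2=2(j-2)(j-1)$, as asserted.

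Finally, part (3) is a purely algebraic consequence of the two closed forms. Substituting $Q_{V^{*}}(x_2,x_j)=2(j-2)(j-1)$, $Q_{V^{*}}(x_2,x_{j-1})=2(j-3)(j-2)$, $Q_{V^{*}}(x_1,x_{j-1})=2(j-2)$ and $Q_{V^{*}}(x_1,x_j)=2(j-1)$ into the right-hand side and expanding shows that both sides equal $2j^2-6j+4$; the identity holds for all $j>2$, where for the $j=3$ instance one reads $Q_{V^{*}}(x_2,x_2)=0$ (equivalently, the formula of part (2) evaluated at $j=2$).

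I do not expect a genuine obstacle, since the statement is essentially computational. The only points requiring care are the index bookkeeping in the floor functions of \eqref{eq.aux1} (making sure each block is correctly read off against the columns $v_{2\ell-1},v_{2\ell}$) and the verification that the omitted summands really vanish, which follows from the reachability bound $1\le\ell\le 2i$ noted above. One should also remark, for completeness, that the extra southeast edges distinguishing $\overline{\mathcal{DS}}(n)$ from $\mathcal{DS}(n)$ are already incorporated into \eqref{eq.aux1}, so no paths beyond those counted there arise. An alternative, more conceptual route would be to prove the Delannoy-type recurrence of part (3) directly by a first-step decomposition and then solve it together with part (1) to recover part (2); this is closer in spirit to the general recurrence of Theorem \ref{thm.main2}, but it is not needed for the lemma as stated.
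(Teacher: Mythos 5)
Your proposal is correct and follows essentially the same route as the paper: parts (1) and (2) are evaluated directly from \eqref{eq.Q*} using the Delannoy-number path counts \eqref{eq.aux1} (with only the $\ell=1$, respectively $\ell=1,2$, blocks surviving, yielding the same determinants $2(j-2)$ and $2(j-2)^2$ the paper computes), and part (3) is verified by substituting the closed forms, exactly as in the paper's polynomial identity $2(j-2)(j-1)=2(j-3)(j-2)+2(j-2)+2(j-1)-2$. Your additional remarks on the vanishing summands and the $j=3$ boundary case are accurate but do not change the argument.
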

\begin{proof}
The numbers of paths going from $x_1,x_2$ and $x_j$ to any $v \in V$ (obtained from \eqref{eq.aux1}) is listed in Table \ref{tab.auxgv2}, where $d_{p,q}$ denotes the Delannoy number, explicit expression is given in \eqref{eq.Dclosed}.
\begin{table}[H]
\centering
\begin{tabular}{c|cc|cc|cc|c}
        & $v_1$ & $v_2$ & $v_3$ & $v_4$ & $v_5$ & $v_6$ & $\cdots$  \\
  \hline
  $x_1$ & 1 & 1 & 0 & 0 & 0 & 0 & $\cdots$  \\
  \hline
  $x_2$ & 1 & 3 & 1 & 1 & 0 & 0 & $\cdots$  \\
  \hline
  $x_j$ & $d_{j-1,0}$ & $d_{j-1,1}$ & $d_{j-2,1}$ & $d_{j-2,2}$ & $d_{j-3,2}$ & $d_{j-3,3}$ & $\cdots$
\end{tabular}
\vspace{0.3cm}
\caption{The numbers of Delannoy paths going from $x_1,x_2$ and $x_j$ to any $v \in V$ in the graph $\overline{\mathcal{DS}}(n)$.}\label{tab.auxgv2}
\end{table}

The first two parts follow from the direct computation using formula \eqref{eq.Q*}. If $j>1$, then
\begin{equation*}
  Q_{V^{*}}(x_1,x_j) = \det \begin{bmatrix}
                              1 & 1 \\
                              d_{j-1,0} & d_{j-1,1}
                            \end{bmatrix}
                     = 2(j-1).
\end{equation*}
If $j>2$, then we have
\begin{equation*}
  Q_{V^{*}}(x_2,x_j) = \det \begin{bmatrix}
                              1 & 3 \\
                              d_{j-1,0} & d_{j-1,1}
                            \end{bmatrix}
                     + \det \begin{bmatrix}
                              1 & 1 \\
                              d_{j-2,1} & d_{j-2,2}
                            \end{bmatrix}
                     =2(j-2)(j-1).
\end{equation*}

The last part follows immediately by verifying the following identity for $j>2$.
\begin{equation*}
  2(j-2)(j-1) = 2(j-3)(j-2) + 2(j-2) + 2(j-1) - 2.
\end{equation*}

\end{proof}

The next result plays a major role in proving Theorem \ref{thm.main2}.
\begin{proposition}\label{prop.aux}
    On the graph $\overline{\mathcal{DS}}(n)$, we have the following recursive expression for $1<i<j$.
    \begin{equation}\label{eq.auxrec}
      Q_{V^{*}}(x_i,x_j) = Q_{V^{*}}(x_{i-1},x_{j-1}) + Q_{V^{*}}(x_{i},x_{j-1}) + Q_{V^{*}}(x_{i-1},x_{j})+2(-1)^{i-1}.
    \end{equation}
    In particular, if $j=i+1$, then $Q_{V^{*}}(x_{i},x_{j-1})=0$ which leads to
    \begin{equation}\label{eq.auxrec0}
      Q_{V^{*}}(x_i,x_{i+1}) = Q_{V^{*}}(x_{i-1},x_{i}) + Q_{V^{*}}(x_{i-1},x_{i+1})+2(-1)^{i-1}.
    \end{equation}
\end{proposition}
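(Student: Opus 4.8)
The plan is to prove the recurrence by induction on $i$, peeling off the first step of one path at a time and using Lemma \ref{lemma.aux1} to renormalize the resulting configurations. The base cases $i=1,2$ are supplied by Lemma \ref{lemma.aux2}; indeed, part (3) of that lemma is already \eqref{eq.auxrec} for $i=2$. Throughout I work with the bijective reading of $Q_{V^{*}}(x_i,x_j)$ as the number of non-intersecting pairs of Delannoy paths from $x_i,x_j$ whose endpoints are paired inside the blocks $v_{\ell}^{*}$, and I repeatedly use one elementary fact: since every step $(1,0),(0,1),(1,1)$ weakly increases each coordinate, a path can only reach vertices that are weakly northeast of its start, so the path from $x_i$ never meets $x_j$ (which lies to its west for $j>i$), and a path started one line above the $x$'s never descends to the line of the $x$'s. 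This is what makes the two first-step decompositions below free of correction terms.

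The first decomposition records the first step of the path starting at the upper point $x_j$, whose three outgoing edges lead to $x_{j-1}$, $y_j$ and $y_{j-1}$ (the partition depicted in Figure \ref{fig.tri-par}). Because the companion path from $x_i$ can never reach $x_j$, this yields cleanly
\[
 Q_{V^{*}}(x_i,x_j) = Q_{V^{*}}(x_i,x_{j-1}) + Q_{V^{*}}(x_i,y_j) + Q_{V^{*}}(x_i,y_{j-1}).
\]
The first term is already a term of \eqref{eq.auxrec}, so the whole problem reduces to the two auxiliary quantities $Q_{V^{*}}(x_i,y_j)$ and $Q_{V^{*}}(x_i,y_{j-1})$. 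For these I peel the first step of the path starting at the lower point $x_i$, whose outgoing edges lead to $x_{i-1}$, $y_i$ and $y_{i-1}$; since the companion path from $y_j$ starts one line up and can never descend, the decomposition is again correction-free, giving
\[
 Q_{V^{*}}(x_i,y_j) = Q_{V^{*}}(x_{i-1},y_j) + Q_{V^{*}}(y_i,y_j) + Q_{V^{*}}(y_{i-1},y_j).
\]
The last two summands have both starting vertices on a common lattice line, so Lemma \ref{lemma.aux1} lowers them to $Q_{V^{*}}(x_{i-1},x_{j-1})$ and $Q_{V^{*}}(x_{i-2},x_{j-1})$ respectively.

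The crux — and the place where the inhomogeneous term $2(-1)^{i-1}$ is born — is the identity
\[
 Q_{V^{*}}(x_i,y_j) = Q_{V^{*}}(x_{i-1},x_j) + (-1)^{i-1},
\]
which I expect to be the main obstacle. I would prove it by induction on $i$: into the previous display I feed the inductive hypothesis in the form $Q_{V^{*}}(x_{i-1},y_j) = Q_{V^{*}}(x_{i-2},x_j) + (-1)^{i-2}$ together with the recurrence \eqref{eq.auxrec} already known at level $i-1$, which rewrites $Q_{V^{*}}(x_{i-2},x_j)+Q_{V^{*}}(x_{i-1},x_{j-1})+Q_{V^{*}}(x_{i-2},x_{j-1})$ as $Q_{V^{*}}(x_{i-1},x_j)-2(-1)^{i-2}$, leaving exactly the advertised $(-1)^{i-1}$. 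Granting this identity and its analogue with $j$ replaced by $j-1$, substitution into the first decomposition produces $Q_{V^{*}}(x_i,x_{j-1}) + Q_{V^{*}}(x_{i-1},x_j) + Q_{V^{*}}(x_{i-1},x_{j-1}) + 2(-1)^{i-1}$, which is \eqref{eq.auxrec}; the special case \eqref{eq.auxrec0} then follows from $Q_{V^{*}}(x_i,x_i)=0$. The delicate points to get right are the low-index boundary cases, where $x_{i-2}$ or $y_{i-1}$ degenerate and Lemma \ref{lemma.aux1} must be applied with care, and a careful verification that the two first-step decompositions really are correction-free; once these are in place, the alternating sign is forced by the base case $i=2$ and propagates through the $2(-1)^{i-2}$ term of the level-$(i-1)$ recurrence.
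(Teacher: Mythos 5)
Your proposal is sound, and its skeleton verifiably closes: I checked your two first-step peelings, your crux identity $Q_{V^{*}}(x_i,y_j)=Q_{V^{*}}(x_{i-1},x_j)+(-1)^{i-1}$, and the simultaneous-induction bookkeeping against explicit path counts on $\overline{\mathcal{DS}}(5)$, and everything holds. Your route is, however, organized genuinely differently from the paper's. The paper peels the first step of \emph{both} paths at once, producing the nine-term expansion \eqref{eq.Qnine}; it recombines the last row and column of that array via \eqref{eq.Qpar1}--\eqref{eq.Qpar2} and converts the top-left $2\times2$ block by Lemma \ref{lemma.aux1}, with the net effect that the assertion at $(i,j)$ is shown to be \emph{equivalent} to the assertion at $(i-1,j-1)$; the recurrence then telescopes down the diagonal to the base $i=2$, which is Lemma \ref{lemma.aux2}(3). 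Mixed quantities such as $Q_{V^{*}}(x_i,y_j)$ appear only transiently and are never evaluated. You instead peel asymmetrically (first at $x_j$, then at $x_i$) and isolate the inhomogeneity in an intermediate invariant the paper never states; your derivation of that invariant at level $i$ from its level-$(i-1)$ instance together with the level-$(i-1)$ recurrence is algebraically correct, and substituting it at $y_j$ and $y_{j-1}$ into the first peeling gives \eqref{eq.auxrec}, so the simultaneous induction on $i$ is valid. What your version buys is a clean explanation of where $2(-1)^{i-1}$ comes from (one $(-1)^{i-1}$ from each of the two mixed terms), at the cost of carrying an extra identity; the paper's version is pure bookkeeping with no auxiliary statement. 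Note also that both peelings hold for free by linearity of the $2\times2$ determinants in \eqref{eq.Q*}, so your bijective unreachability argument, while correct (every step weakly increases both coordinates), is not even needed.

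One caveat you should repair before calling this a proof: your claim that ``the base cases $i=1,2$ are supplied by Lemma \ref{lemma.aux2}'' is accurate only for the recurrence itself. The crux identity at $i=2$ is \emph{not} in Lemma \ref{lemma.aux2} and must be computed directly, and this is exactly where your flagged degeneracies bite. In $\overline{\mathcal{DS}}(n)$ the degenerate point $y_1$ coincides with the endpoint $v_2$, and paths from $y_j$ \emph{can} descend to $v_2$, so the path-count row of $y_j$ is not the naive two-column shift of the row of $x_{j-1}$ in the $v_2$ column ($|\mathscr{P}(y_j,v_2)|=1$, not $0$); the conclusion of Lemma \ref{lemma.aux1} nevertheless survives because the $v_1$-entry of every $y$-point vanishes, killing the first $2\times2$ block. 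Using the honest rows one gets $Q_{V^{*}}(x_1,y_j)=1$, $Q_{V^{*}}(y_2,y_j)=Q_{V^{*}}(x_1,x_{j-1})=2(j-2)$ and $Q_{V^{*}}(y_1,y_j)=0$, whence $Q_{V^{*}}(x_2,y_j)=2j-3=Q_{V^{*}}(x_1,x_j)-1$, which is the crux identity at $i=2$ (the convention $Q_{V^{*}}(x_0,\cdot)=0$ then also makes your $i=1$ case read correctly). With these boundary computations written out, your plan is a complete and correct alternative proof.
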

\begin{proof}
One can partition the set of paths starting from $x_i$ into the disjoint union of three sets, based on three possible first steps of a path; see Figure \ref{fig.tri-par}. In other words, when $i>1$, we have
\begin{equation*}
  \mathscr{P}(x_i,v) = \mathscr{P}(y_i,v) \cup \mathscr{P}(y_{i-1},v) \cup \mathscr{P}(x_{i-1},v),
\end{equation*}
and therefore
\begin{equation}\label{eq.partition}
  |\mathscr{P}(x_i,v)| = |\mathscr{P}(y_i,v)| + |\mathscr{P}(y_{i-1},v)| + |\mathscr{P}(x_{i-1},v)|,
\end{equation}
for any $v \in V$.

In Figure \ref{fig.tri-par}, the points $y_i$'s are on the same lattice line (say, $y=\ell$) while the points $x_i$'s are located one lattice line below ($y=\ell-1$). Assume $2<i<j$, we have the following four identities by Lemma \ref{lemma.aux1}.
\begin{align}
  Q_{V^{*}}(y_{i},y_{j}) & = Q_{V^{*}}(x_{i-1},x_{j-1}). \label{eq.Qid1}\\
  Q_{V^{*}}(y_{i-1},y_{j}) & = Q_{V^{*}}(x_{i-2},x_{j-1}). \label{eq.Qid2}\\
  Q_{V^{*}}(y_{i},y_{j-1}) & = Q_{V^{*}}(x_{i-1},x_{j-2}). \label{eq.Qid3}\\
  Q_{V^{*}}(y_{i-1},y_{j-1}) & = Q_{V^{*}}(x_{i-2},x_{j-2}). \label{eq.Qid4}
\end{align}
Using \eqref{eq.Q*}, \eqref{eq.partition} and the linearity of determinants, we obtain
\begin{align}
  Q_{V^{*}}(x_i,x_{j-1}) & = Q_{V^{*}}(y_i,x_{j-1}) + Q_{V^{*}}(y_{i-1},x_{j-1}) + Q_{V^{*}}(x_{i-1},x_{j-1}), \label{eq.Qpar1}\\
  Q_{V^{*}}(x_{i-1},x_{j}) & = Q_{V^{*}}(x_{i-1},y_{j}) + Q_{V^{*}}(x_{i-1},y_{j-1}) + Q_{V^{*}}(x_{i-1},x_{j-1}). \label{eq.Qpar2}
\end{align}
For a pair of paths starting from $x_i$ and $x_j$, each path can be partitioned into three disjoint sets mentioned above, this gives nine disjoint pairs of paths. So, we can rewrite $Q_{V^{*}}(x_i,x_j)$ into the sum of following nine terms.
\begin{align}\label{eq.Qnine}
  Q_{V^{*}}(x_i,x_j) & = Q_{V^{*}}(y_i,y_j) + Q_{V^{*}}(y_i,y_{j-1}) + Q_{V^{*}}(y_i,x_{j-1}) \nonumber\\
                     & + Q_{V^{*}}(y_{i-1},y_j) + Q_{V^{*}}(y_{i-1},y_{j-1}) + Q_{V^{*}}(y_{i-1},x_{j-1}) \nonumber\\
                     & + Q_{V^{*}}(x_{i-1},y_j) + Q_{V^{*}}(x_{i-1},y_{j-1}) + Q_{V^{*}}(x_{i-1},x_{j-1}).
\end{align}

If we view the nine terms displayed on the right hand side of \eqref{eq.Qnine} as a $3 \times 3$ array, then we can combine three terms in the last column by \eqref{eq.Qpar1} and combine three terms in the last row by \eqref{eq.Qpar2}; note that $Q_{V^{*}}(x_{i-1},x_{j-1})$ is counted twice. The four terms on the top left $2 \times 2$ array can be rewritten by identities \eqref{eq.Qid1},\eqref{eq.Qid2},\eqref{eq.Qid3} and \eqref{eq.Qid4}. So, we can simplify the right hand side of \eqref{eq.Qnine} to
\begin{align}
  Q_{V^{*}}(x_i,x_j) & = Q_{V^{*}}(x_i,x_{j-1}) + Q_{V^{*}}(x_{i-1},x_{j}) - Q_{V^{*}}(x_{i-1},x_{j-1}) \nonumber \\
  & + Q_{V^{*}}(x_{i-1},x_{j-1}) + Q_{V^{*}}(x_{i-1},x_{j-2}) + Q_{V^{*}}(x_{i-2},x_{j-1}) + Q_{V^{*}}(x_{i-2},x_{j-2}). \label{eq.Qsimplify}
\end{align}

Compare the equation \eqref{eq.auxrec} that we want to prove with \eqref{eq.Qsimplify}, it is equivalent to show that
\begin{equation}\label{eq.Qcompare}
  Q_{V^{*}}(x_{i-1},x_{j-1}) = Q_{V^{*}}(x_{i-2},x_{j-2})+Q_{V^{*}}(x_{i-1},x_{j-2})+Q_{V^{*}}(x_{i-2},x_{j-1}) + 2(-1)^{i-2}.
\end{equation}
We then continue with the same process of partitioning the paths starting from $x_{i-1}$ and $x_{j-1}$, and working through the simplification mentioned above. Finally, the problem is reduced to the case when $i=2$ and $j>i$:
\begin{equation*}
  Q_{V^{*}}(x_{2},x_{j}) = Q_{V^{*}}(x_{1},x_{j-1})+Q_{V^{*}}(x_{2},x_{j-1})+Q_{V^{*}}(x_{1},x_{j}) + 2(-1)^{2-1},
\end{equation*}
which has been shown in Lemma \ref{lemma.aux2}. This completes the proof of Proposition \ref{prop.aux}.
\end{proof}

\subsection{Proof of Theorem \ref{thm.main2} and Corollary \ref{cor.main}}\label{sec.pfthm2}

Now, we consider the graph $\mathcal{DS}(n)$ mentioned in Section \ref{sec.path}, see again Figure \ref{fig.tri-AD} for $n=10$. If we delete points $u_i$'s and $v_2$ and all their incident edges from $\mathcal{DS}(n)$, then we obtain the graph $\overline{\mathcal{DS}}(n-1)$. On the graph $\mathcal{DS}(n)$, we label the two points which are adjacent to $u_i$ (except $u_1$ and $u_2$) by $x_{i-2}$ and $x_{i-1}$, from bottom to top, respectively.

One can partition the set of paths starting from $u_i$ into the disjoint union of two sets, based on two possible first steps of a path; see Figure \ref{fig.tri-AD-par}. In other words, when $i>2$, we have
\begin{equation*}
  \mathscr{P}(u_i,v) = \mathscr{P}(x_{i-1},v) \cup \mathscr{P}(x_{i-2},v),
\end{equation*}
and therefore
\begin{equation}\label{eq.partition2}
  |\mathscr{P}(u_i,v)| = |\mathscr{P}(x_{i-1},v)| + |\mathscr{P}(x_{i-2},v)|,
\end{equation}
for any $v \in V$.
\begin{figure}[htb!]
    \centering
    \subfigure[]
    {\label{fig.tri-AD}\includegraphics[width=0.3\textwidth]{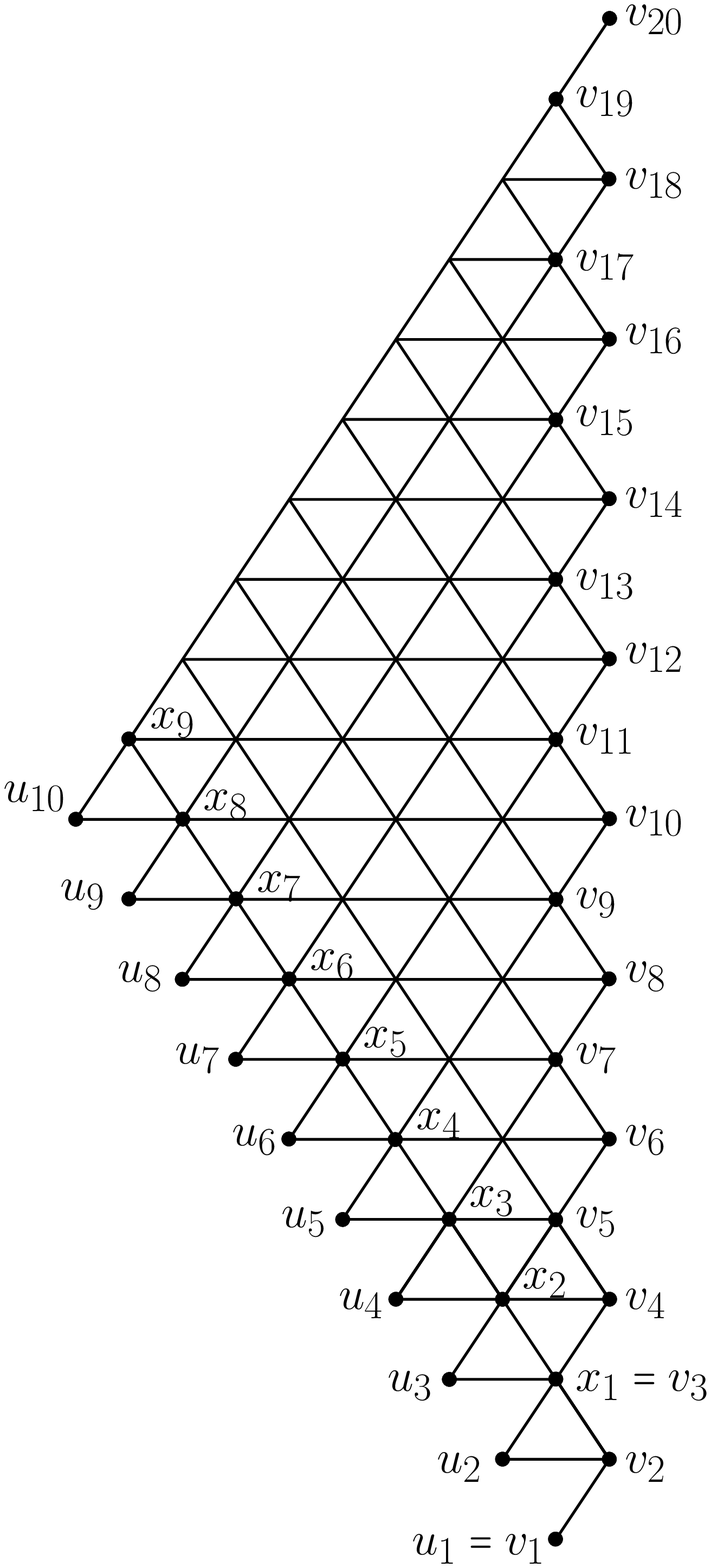}}
    \hspace{15mm}
    \subfigure[]
    {\label{fig.tri-AD-par}\includegraphics[width=0.3\textwidth]{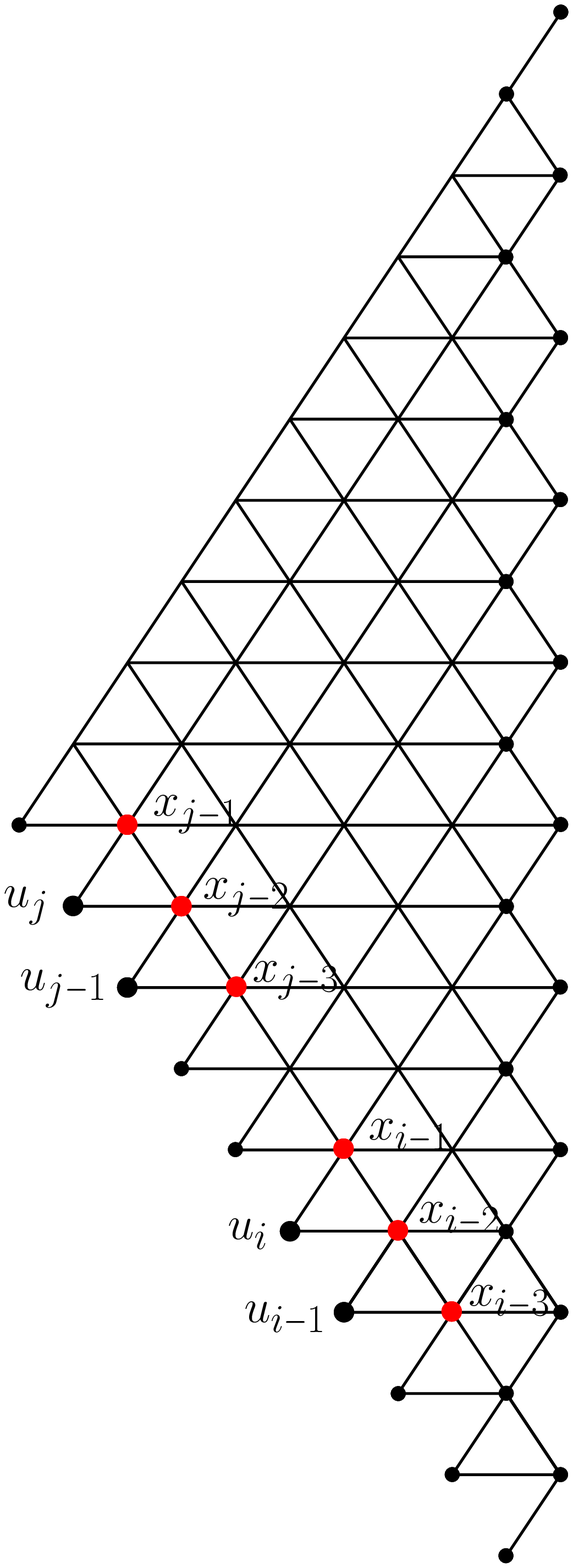}}
    \caption{(a) The graph $\mathcal{DS}(10)$. (b) An illustration of partitioning paths in \eqref{eq.Qpar3}.}
    \label{fig.tri2}
\end{figure}

\begin{proof}[Proof of Theorem \ref{thm.main2}]

We remind the reader that the entry $a_{i,j}$ of the matrix $A$ is given by $Q_{V^{*}}(u_i,u_j)$ on the graph $\mathcal{DS}(n)$. It suffices to show that $Q_{V^{*}}(u_i,u_j)$ satisfies the recurrence relation in \eqref{eq.rec}.

We will prove the recurrence relation by analyzing $Q_{V^{*}}(u_i,u_j)$ in three cases. Case $1$ and Case $2$ take care of two recursive expressions in \eqref{eq.rec} for $i>3$. Case $3$ deals with the initial condition $i=1$ in \eqref{eq.rec} and the two recursive expressions when $i=2$ and $i=3$.
\begin{itemize}
  \item[Case 1:] $i>3$ and $j>i+1$. \\
    By \eqref{eq.Q*}, \eqref{eq.partition2} and the linearity of determinants, we have the following identity (see Figure \ref{fig.tri-AD-par} for an illustration).
    \begin{equation}\label{eq.Qpar3}
      Q_{V^{*}}(u_{i},u_{j}) = Q_{V^{*}}(x_{i-1},x_{j-1}) + Q_{V^{*}}(x_{i-1},x_{j-2}) + Q_{V^{*}}(x_{i-2},x_{j-1}) + Q_{V^{*}}(x_{i-2},x_{j-2}).
    \end{equation}
    By Proposition \ref{prop.aux}, one can apply \eqref{eq.auxrec} to each terms on the right hand side of \eqref{eq.Qpar3}, this gives the expression of $Q_{V^{*}}(u_{i},u_{j})$ into the sum of sixteen terms.
    \begin{align}
      Q_{V^{*}}(u_i,u_j) & = Q_{V^{*}}(x_{i-2},x_{j-2}) + Q_{V^{*}}(x_{i-1},x_{j-2}) + Q_{V^{*}}(x_{i-2},x_{j-1}) + 2(-1)^{i-2} \nonumber\\
                         & + Q_{V^{*}}(x_{i-2},x_{j-3}) + Q_{V^{*}}(x_{i-1},x_{j-3}) + Q_{V^{*}}(x_{i-2},x_{j-2}) + 2(-1)^{i-2} \nonumber\\
                         & + Q_{V^{*}}(x_{i-3},x_{j-2}) + Q_{V^{*}}(x_{i-2},x_{j-2}) + Q_{V^{*}}(x_{i-3},x_{j-1}) + 2(-1)^{i-3} \nonumber\\
                         & + Q_{V^{*}}(x_{i-3},x_{j-3}) + Q_{V^{*}}(x_{i-2},x_{j-3}) + Q_{V^{*}}(x_{i-3},x_{j-2}) + 2(-1)^{i-3}. \label{eq.Q16}
    \end{align}

    One can view these sixteen terms displayed on the right hand side of \eqref{eq.Q16} as a $4 \times 4$ array. By \eqref{eq.Qpar3}, the sum of the four terms in the first, second and third columns can be simplified to $Q_{V^{*}}(u_{i-1},u_{j-1})$, $Q_{V^{*}}(u_{i},u_{j-1})$ and $Q_{V^{*}}(u_{i-1},u_{j})$, respectively. The four numbers in the last column add up to zero. Therefore, we obtain
    \begin{equation}\label{eq.Qrec2}
      Q_{V^{*}}(u_{i},u_{j}) = Q_{V^{*}}(u_{i-1},u_{j-1}) + Q_{V^{*}}(u_{i},u_{j-1}) + Q_{V^{*}}(u_{i-1},u_{j}).
    \end{equation}

  \item[Case 2:] $i>3$ and $j=i+1$. \\
    The term $Q_{V^{*}}(x_{i-1},x_{j-2})$ in \eqref{eq.Qpar3} vanishes, so we have
    \begin{equation}\label{eq.Qpar4}
      Q_{V^{*}}(u_{i},u_{i+1}) = Q_{V^{*}}(x_{i-1},x_{i}) + Q_{V^{*}}(x_{i-2},x_{i}) + Q_{V^{*}}(x_{i-2},x_{i-1}).
    \end{equation}
    We then apply \eqref{eq.auxrec} to the second term on the right hand side of \eqref{eq.Qpar4}, whereas we apply \eqref{eq.auxrec0} to the other terms, which gives an expression of $Q_{V^{*}}(u_{i},u_{i+1})$ into the sum of ten terms.
    \begin{align}
      Q_{V^{*}}(u_i,u_{i+1}) & = Q_{V^{*}}(x_{i-2},x_{i-1}) + Q_{V^{*}}(x_{i-2},x_{i})+ 2(-1)^{i-2} \nonumber\\
                         & + Q_{V^{*}}(x_{i-3},x_{i-1}) + Q_{V^{*}}(x_{i-2},x_{i-1}) + Q_{V^{*}}(x_{i-3},x_{i}) + 2(-1)^{i-3} \nonumber\\
                         & + Q_{V^{*}}(x_{i-3},x_{i-2}) + Q_{V^{*}}(x_{i-3},x_{i-1})+ 2(-1)^{i-3}. \label{eq.Q10}
    \end{align}

    By \eqref{eq.Qpar4}, the sum of three terms in the first column can be rewritten as $Q_{V^{*}}(u_{i-1},u_{i})$. The sum of the other four terms is $Q_{V^{*}}(u_{i-1},u_{i+1})$ by \eqref{eq.Qpar3}. The three remaining numbers add up to $2(-1)^{i-3} = 2(-1)^{i-1}$. Therefore, we have
    \begin{equation}\label{eq.Qrec4}
      Q_{V^{*}}(u_{i},u_{i+1}) = Q_{V^{*}}(u_{i-1},u_{i}) + Q_{V^{*}}(u_{i-1},u_{i+1}) + 2(-1)^{i-1}.
    \end{equation}

  \item[Case 3:] $i=1,2,3$ and $j>i$.\\
    We can compute $Q_{V^{*}}(u_i,u_j)$ directly from \eqref{eq.Q*} when $i=1$ and $i=2$. The numbers of paths going from $u_1,u_2$ and $u_j$ to any $v \in V$ are listed in Table \ref{tab.auxgv3}.
    \begin{table}[H]
    \centering
    \begin{tabular}{c|cc|cc|cc|c}
            & $v_1$ & $v_2$ & $v_3$ & $v_4$ & $v_5$ & $v_6$ & $\cdots$  \\
      \hline
      $u_1$ & 1 & 1 & 0 & 0 & 0 & 0 & $\cdots$  \\
      \hline
      $u_2$ & 0 & 2 & 1 & 1 & 0 & 0 & $\cdots$  \\
      \hline
      $u_j$ & 0 & $d_{j-1,0}+d_{j-2,0}$ & $d_{j-2,0}+d_{j-3,0}$ & $d_{j-2,1}+d_{j-3,1}$ & $d_{j-3,1}+d_{j-4,1}$ & $d_{j-3,2}+d_{j-4,2}$ & $\cdots$
    \end{tabular}
    \vspace{0.3cm}
    \caption{The numbers of Delannoy paths going from $u_1,u_2$ and $u_j$ to any $v \in V$ in the graph $\mathcal{DS}(n)$.}\label{tab.auxgv3}
    \end{table}

    If $i=1,j>1$, then we have
    \begin{equation}\label{eq.check1}
      Q_{V^{*}}(u_1,u_j) = \det \begin{bmatrix}
                                  1 & 1 \\
                                  0 & d_{j-1,0}+d_{j-2,0}
                                \end{bmatrix}
                         = 2.
    \end{equation}
    This shows the initial condition of the recurrence relation $Q_{V^{*}}(u_1,u_j) = 2$ for $j>1$.

    If $i=2,j>2$, then we have
    \begin{equation}\label{eq.check2}
      Q_{V^{*}}(u_2,u_j) = \det \begin{bmatrix}
                                  0 & 2 \\
                                  0 & d_{j-1,0}+d_{j-2,0}
                                \end{bmatrix}
                         + \det \begin{bmatrix}
                                  1 & 1 \\
                                  d_{j-2,0}+d_{j-3,0} & d_{j-2,1}+d_{j-3,1}
                                \end{bmatrix}
                         =4j-10.
    \end{equation}

    If $i=3, j>3$, then we are able to partition the paths going from $u_3$ into two sets of paths going from $x_1$ and $x_2$. By Lemma \ref{lemma.aux2}, we have
    \begin{align}
      Q_{V^{*}}(u_{3},u_{j}) & = Q_{V^{*}}(x_{2},x_{j-1}) + Q_{V^{*}}(x_{2},x_{j-2}) + Q_{V^{*}}(x_{1},x_{j-1}) + Q_{V^{*}}(x_{1},x_{j-2}) \nonumber\\
                         & = 2(j-3)(j-2) + 2(j-4)(j-3) + 2(j-2) + 2(j-3) \nonumber\\
                         & = 2(2j^2-10j+13). \label{eq.check3}
    \end{align}

    We can readily check that when $i=2$ and $i=3$, two recursive expressions in \eqref{eq.rec} hold by the straight forward computation using \eqref{eq.check1}, \eqref{eq.check2} and \eqref{eq.check3}.
    \begin{equation*}
        \begin{cases}
        Q_{V^{*}}(u_{i},u_{j}) = Q_{V^{*}}(u_{i-1},u_{j}) + Q_{V^{*}}(u_{i},u_{j-1}) + Q_{V^{*}}(u_{i-1},u_{j-1}), & j>i+1, \\
        Q_{V^{*}}(u_{i},u_{i+1}) = Q_{V^{*}}(u_{i-1},u_{i+1}) + Q_{V^{*}}(u_{i-1},u_{i}) + 2(-1)^{i-1}. &
        \end{cases} \tag{1.2}
    \end{equation*}
\end{itemize}
This completes the proof of Theorem \ref{thm.main2}.
\end{proof}

Besides the recursive way to find $a_{i,j}$, we are going to show that $a_{i,j}$ can be written explicitly as an alternating sum of entries $s_{p,q}$ in the Schr{\"o}der triangle.
\begin{equation*}
    a_{i,j} = 2 \sum_{\ell = 1}^{i}(-1)^{\ell-1}s_{i-\ell,j-\ell-1}. \tag{1.3}
\end{equation*}
We remind the reader that $s_{p,q}$ can be obtained recursively in \eqref{eq.recsch}.

\begin{proof}[Proof of Corollary \ref{cor.main}]
  It suffices to show that the right hand side of \eqref{eq.cor} satisfies the recurrence relation \eqref{eq.rec} stated in Theorem \ref{thm.main2}. We proceed with the following three cases.
  \begin{itemize}
    \item[Case $1$:] $i=1,j>1$.\\
        We use the fact that $s_{0,q} = 1$ for $q \geq 0$, then
        \begin{equation*}
          2 \sum_{\ell = 1}^{1}(-1)^{\ell-1}s_{1-\ell,j-\ell-1} = 2 (-1)^{1-1} s_{0,j-2} = 2.
        \end{equation*}

    \item [Case $2$:] $i>1, j > i+1$.\\
        We use the recurrence relation of $s_{p,q}$ given in \eqref{eq.recsch}, then we obtain
        \begin{align}
            2 \sum_{\ell = 1}^{i} (-1)^{\ell-1} s_{i-\ell,j-\ell-1} & = 2 \sum_{\ell = 1}^{i-1} (-1)^{\ell-1} s_{i-\ell,j-\ell-1} + 2 (-1)^{i-1} s_{0,j-i-1} \nonumber\\
            & = 2 \sum_{\ell = 1}^{i-1} (-1)^{\ell-1} \left( s_{i-\ell-1,j-\ell-1} + s_{i-\ell-1,j-\ell-2} + s_{i-\ell,j-\ell-2} \right) + 2 (-1)^{i-1} s_{0,j-i-1} \nonumber\\
            & = 2 \left( \sum_{\ell = 1}^{i-1} (-1)^{\ell-1} s_{i-\ell-1,j-\ell-1} \right) + 2 \left(\sum_{\ell = 1}^{i-1} (-1)^{\ell-1} s_{i-\ell-1,j-\ell-2} \right) + 2 \left(\sum_{\ell = 1}^{i} (-1)^{\ell-1} s_{i-\ell,j-\ell-2} \right). \nonumber
        \end{align}

    \item [Case $3$:] $i>1, j = i+1$.\\
        Similar to Case $2$, we have
        \begin{align}
            2 \sum_{\ell = 1}^{i} (-1)^{\ell-1} s_{i-\ell,i-\ell}  & = 2 \sum_{\ell = 1}^{i-1} (-1)^{\ell-1} s_{i-\ell,i-\ell} + 2 (-1)^{i-1} s_{0,0} \nonumber\\
            & = 2 \sum_{\ell = 1}^{i-1} (-1)^{\ell-1} \left( s_{i-\ell-1,i-\ell} + s_{i-\ell-1,i-\ell-1} \right) + 2 (-1)^{i-1} \nonumber\\
            & = 2 \left( \sum_{\ell = 1}^{i-1} (-1)^{\ell-1} s_{i-\ell-1,i-\ell} \right) + 2 \left(\sum_{\ell = 1}^{i-1} (-1)^{\ell-1} s_{i-\ell-1,i-\ell-1} \right) + 2 (-1)^{i-1}.  \nonumber
        \end{align}
  \end{itemize}
This completes the proof of Corollary \ref{cor.main}.
\end{proof}

\section{Open problems}\label{sec.open}

In this section, we formulate some open problems arising from enumerating the off-diagonal symmetry class of domino tilings of the Aztec diamond.

We first generalize our skew-symmetric matrix $A$ to $A(k,t)$, by introducing parameters $k$ and $t$ into its boundary conditions. The $(i,j)$-entry of $A(k,t)$ is defined by
\begin{equation}\label{eq.kt-matrix}
  \begin{cases}
    a_{1,j}  = t, & j > 1, \\
    a_{i,j}  = a_{i-1,j} + a_{i,j-1} + a_{i-1,j-1}, & j>i+1,\quad i>1, \\
    a_{i,j}  = a_{i-1,j} + a_{i-1,j-1} + k(-1)^{i-1}, & j=i+1,\quad i>1, \\
    a_{i,j}  = -a_{j,i}, & j \leq i.
\end{cases}
\end{equation}
Clearly, the original matrix $A$ is obtained from $A(k,t)$ with $k=t=2$.

By Conjecture \ref{conj} and Theorem \ref{thm.main1}, $\pf(A_{[2n]})$ could be expressed as the product of two consecutive terms of a sequence. The generalized matrix $A(k,t)$ defined above also seems to have this surprising property. We state this in the following conjecture. This conjecture has been checked by computer up to $n=25$.

%
\begin{conjecture}\label{conjext}
  Let $A(k,t)$ be the skew-symmetric matrix defined by \eqref{eq.kt-matrix}. Then we have
  \begin{equation}\label{eq.conj1}
    \pf(A_{[2n]}(k,t)) = t o_{n-1}(k,t)o_n(k,t),
  \end{equation}
 where $o_n(k,t) \in \mathbb{Z}[k,t]$, the polynomial ring over $\mathbb{Z}$ in two variables (see Table \ref{tab.o(k,t)} for the first nine terms of this sequence). In particular, Conjecture \ref{conj} is the special case when $k=t=2$.
\end{conjecture}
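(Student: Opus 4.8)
The plan is to establish the factorization as an identity in the polynomial ring $\mathbb{Z}[k,t]$ by uncovering a closed bilinear recurrence for the Pfaffians $P_n := \pf(A_{[2n]}(k,t))$ together with an auxiliary family, and then bootstrapping the two-term product form by induction on $n$. Since the asserted identity is supposed to hold for \emph{all} $(k,t)$, there is no special arithmetic to exploit; everything should be treated formally over $\mathbb{Z}[k,t]$, which is exactly the setting in which an integrable, Hirota-type bilinear argument is natural. The key combinatorial input I would lean on is that, by Theorem \ref{thm.main1}, Pfaffians of submatrices $A_I(k,t)$ on nearly contiguous index sets $I$ are themselves honest enumerations (boundary-defect counts), whose entries obey the single entry recurrence \eqref{eq.kt-matrix}; this lets me interpret and control the Pfaffians that appear in a condensation argument.

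First I would set up the Pfaffian analogue of the Desnanot--Jacobi identity (the quadratic Plücker relation for sub-Pfaffians). Writing $\pf_S := \pf(A_S(k,t))$ for an even index set $S$, one has for $a<b<c<d$ in $[2n]$ the relation
\begin{equation*}
\pf_{[2n]}\,\pf_{[2n]\setminus\{a,b,c,d\}} = \pf_{[2n]\setminus\{a,b\}}\,\pf_{[2n]\setminus\{c,d\}} - \pf_{[2n]\setminus\{a,c\}}\,\pf_{[2n]\setminus\{b,d\}} + \pf_{[2n]\setminus\{a,d\}}\,\pf_{[2n]\setminus\{b,c\}}.
\end{equation*}
I would specialize $\{a,b,c,d\}$ to the extreme indices $\{2n-3,2n-2,2n-1,2n\}$, so that the deletions produce either matrices $A_{[2m]}(k,t)$ (hence $P_m$) or boundary-defect matrices $A_I(k,t)$ with $I$ missing only one or two of its largest elements. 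The second family is the candidate for $o_n(k,t)$: I would \emph{define} $o_n$ via such a near-contiguous truncated Pfaffian (or, equivalently, as $P_n/(t\,o_{n-1})$ once divisibility is known) and use the entry recurrence \eqref{eq.kt-matrix} to derive a first-order relation tying each truncated Pfaffian back to the $P$'s and to the previous $o$'s.

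The heart of the argument is to \emph{close the system}: I would show that every sub-Pfaffian occurring on the right-hand side collapses onto the two families $\{P_m\}$ and $\{o_m\}$, so that the quadratic Plücker relation becomes a genuine bilinear recurrence purely in $P$'s and $o$'s. Substituting the conjectured form $P_n = t\,o_{n-1}o_n$ should then reduce this recurrence to a provable three-term relation for $o_n$ alone. Granting this, the induction is routine: assuming $P_m = t\,o_{m-1}o_m$ for all $m<n$, the closed bilinear relation determines $P_n$ and one checks it equals $t\,o_{n-1}o_n$. Integrality, $o_n \in \mathbb{Z}[k,t]$, would follow by verifying that the resulting recurrence for $o_n$ has coefficients in $\mathbb{Z}[k,t]$ and that the single division by $t\,o_{n-1}$ in passing from $P_n$ to $o_n$ is exact; here the explicit Schröder-triangle expansion of the entries (Corollary \ref{cor.main}) and the interpretation of the truncated Pfaffians as lattice-path counts should provide the needed divisibility.

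The main obstacle is precisely this closing step. Condensation relations generically generate Pfaffians on \emph{non}-contiguous index sets, which need not reduce to the two-family data $\{P_m\},\{o_m\}$; proving that the specific off-diagonal boundary term $k(-1)^{i-1}$ together with the Delannoy-type recurrence forces all relevant sub-Pfaffians onto a finite-dimensional, recurrently-governed family is exactly the discovery of the underlying bilinear (Hirota) structure. That such a structure exists is strongly suggested by the very shape of the conjecture: a factorization into \emph{two consecutive} terms is far more rigid than the full product of skew-norms that the general theory of skew-orthogonal polynomials would predict, and it is this rigidity that both makes the statement striking and explains why it remains open. A secondary, more technical hurdle is the integrality step itself: even with a rational recurrence in hand, showing $o_n$ is a genuine polynomial will likely require an independent combinatorial or determinantal model for $o_n$, whose existence is made non-obvious by the large prime factors already visible in Table \ref{tab.o_n}.
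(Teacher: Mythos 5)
You should note first that the statement you were given is Conjecture~\ref{conjext}: the paper does not prove it, and explicitly leaves it open (verified by computer up to $n=25$, and symbolically only for $n\le 8$ in Appendix~\ref{sec.eva} via the Ishikawa--Tagawa--Zeng Pfaffian decomposition). So there is no paper proof to compare against, and your text is, by your own admission, a research program rather than a proof. The decisive gap sits exactly where you flag it, but it is worth making concrete why the ``closing the system'' step is not merely technical. With $\{a,b,c,d\}=\{2n-3,2n-2,2n-1,2n\}$ the Pfaffian Pl\"ucker (condensation) relation reads
\begin{align*}
P_n\,P_{n-2} \;=\;& \pf\bigl(A_{[2n-4]\cup\{2n-1,2n\}}\bigr)\,P_{n-1} \;-\; \pf\bigl(A_{[2n-4]\cup\{2n-2,2n\}}\bigr)\,\pf\bigl(A_{[2n-4]\cup\{2n-3,2n-1\}}\bigr)\\
&+\; \pf\bigl(A_{[2n-4]\cup\{2n-2,2n-1\}}\bigr)\,\pf\bigl(A_{[2n-4]\cup\{2n-3,2n\}}\bigr).
\end{align*}
The four Pfaffians whose smallest removed index is $2n-3$ or $2n-2$ are of the form $[2d-2]\cup\{i,j\}$ with $d=n-1$, hence equal $P_{n-1}\,r_{i,j}$ by \eqref{eq.pfd3}; but the remaining term $\pf(A_{[2n-4]\cup\{2n-1,2n\}})$, on a gap-two index set, lies outside that family, and resolving it by further condensation proliferates non-contiguous index sets. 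Nothing in the entry recurrence \eqref{eq.kt-matrix} is shown to force this cascade to collapse onto $\{P_m\}$ and one auxiliary family, and the paper's own appendix reports exactly this wall: the general closed forms for $t_{\ell}$ and $r_{i,j}$ obtained from the decomposition ``do not seem to be helpful for proving Conjecture~\ref{conjext}.''

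A second, independent gap is the definition of $o_n$ itself, which as written is circular. Defining $o_n := P_n/(t\,o_{n-1})$ presupposes precisely the exact divisibility in $\mathbb{Z}[k,t]$ that constitutes the conjecture; defining $o_n$ instead as some specific truncated Pfaffian leaves you obligated to prove that the two definitions agree, which is again the conjecture. Note that via \eqref{eq.pfd3} the claim is equivalent to the telescoping $t_{\ell} = o_{\ell}/o_{\ell-2}$ with every $o_{\ell}$ an honest polynomial --- a pattern plainly visible in the list \eqref{eq.ts} for $\ell \le 8$ --- and your proposal supplies no mechanism producing either the telescoping or the integrality; appealing to Corollary~\ref{cor.main} gives polynomiality of the \emph{entries}, not of the ratio of Pfaffians. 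In short: you have correctly identified the two obstacles (closure of the bilinear system, and exactness of the division by $t\,o_{n-1}$), and you are candid that both are open, but since both essential steps are conjectured rather than established, the proposal does not prove the statement --- consistent with its status in the paper as an open conjecture.
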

\begin{table}[htb!]
\centering
  \begin{tabular}{c|c}
    $n$ & $o_n(k,t)$ \\
    \hline
    $0$ & $1$ \\
    \hline
    $1$ & $1$ \\
    \hline
    $2$ & $-k+4t$ \\
    \hline
    $3$ & $-3k+16t$ \\
    \hline
    $4$ & $13k^2-120kt+256t^2$ \\
    \hline
    $5$ & $149k^2-1584kt+4096t^2$ \\
    \hline
    $6$ & $-2661k^3+38540k^2t-178688kt^2+262144t^3$ \\
    \hline
    $7$ & $-119335k^3+1899616k^2t-9887744kt^2+16777216t^3$ \\
    \hline
    $8$ & $8669753k^4-171171824k^3t+1234228224k^2t^2-3832545280kt^3+4294967296t^4$
  \end{tabular}
  \vspace{0.3cm}
  \caption{The first nine terms of $o_n(k,t)$.}\label{tab.o(k,t)}
\end{table}

The origin of the matrix $A$ comes from enumerating off-diagonally symmetric domino tilings of the Aztec diamond. It is natural to ask the following problem.
\begin{problem}
  Do the parameters $k$ and $t$ in the matrix $A(k,t)$ represent something naturally from the viewpoint of off-diagonally symmetric domino tilings of the Aztec diamond?
\end{problem}

Moreover, it would be interesting to find a combinatorial interpretation of Conjecture \ref{conj}. This leads to the following problem.
\begin{problem}
  Can we interpret off-diagonally symmetric domino tilings of the Aztec diamond of order $2n$ as a pair of some combinatorial objects?
\end{problem}

Finally, we would like to find a better way to express our Pfaffians.
\begin{problem}
  Is there a closed-form expression of $\pf(A_{[2n]}(k,t))$?
\end{problem}

\begin{appendices}
\section{Pfaffian calculations}\label{sec.eva}

In this appendix, we provide an idea to calculate the Pfaffian of the generalized matrix $A_I(k,t)$ for some set $I$ recursively. Evaluating the determinant of so-called \textit{Pascal-like} matrices (whose entries satisfy some specific $3$-term recurrence) has been discussed before; see for instance \cite{Bacher02}, \cite{Kra02} and \cite{MNN08}. Many useful and efficient tools to evaluate determinants are listed in the survey papers by Krattenthaler (\cite{Det1} and \cite{Det2}).

However, the recurrence satisfied by the entries of $A(k,t)$ is slightly different. The techniques presented in the papers mentioned in the previous paragraph seem not to work on evaluating the Pfaffian of our matrix $A_I(k,t)$ with $I$ given.

Our idea is motivated by the Pfaffian decomposition in the work of Ishikawa, Tagawa and Zeng \cite{ITZ13} which will be introduced in Section \ref{sec.pfdecomp}. The recursive calculation of $\pf(A_I(k,t))$ will be given in Section \ref{sec.pfeva}.

\subsection{The Pfaffian decomposition}\label{sec.pfdecomp}

The Pfaffian decomposition is the LDU-decomposition of a skew-symmetric matrix. We state this decomposition in Theorem \ref{thm.pfdecomp}, see \cite[Theorem 2.2]{ITZ13} for more details.

\begin{theorem}[Ishikawa, Tagawa and Zeng \cite{ITZ13}]\label{thm.pfdecomp}
  Let $M$ be a $2n \times 2n$ skew-symmetric matrix. If $\pf(M_{[2i]}) \neq 0$ for $1 \leq i \leq n$, then $M$ can uniquely be written as
  \begin{equation}\label{eq.pfd1}
    M = R^{\intercal}TR,
  \end{equation}
  where $T$ is a diagonal block matrix and $R$ is an upper triangular block matrix given by
  \begin{equation}\label{eq.pfd2}
    T= \begin{bmatrix}
         0 & t_1 & 0 & 0 & \cdots & 0 & 0 \\
         -t_1 & 0 & 0 & 0 & \cdots & 0 & 0 \\
         0 & 0 & 0 & t_2 & \cdots & 0 & 0 \\
         0 & 0 & -t_2 & 0 & \cdots & 0 & 0 \\
         \vdots & \vdots & \vdots & \vdots & \ddots & \vdots & \vdots \\
         0 & 0 & 0 & 0 & \cdots & 0 & t_n \\
         0 & 0 & 0 & 0 & \cdots & -t_n & 0 \\
       \end{bmatrix}, \quad
    R= \begin{bmatrix}
         0 & 1 & r_{1,3} & r_{1,4} & \cdots & r_{1,2n-1} & r_{1,2n} \\
         -1 & 0 & r_{2,3} & r_{2,4} & \cdots & r_{2,2n-1} & r_{2,2n} \\
         0 & 0 & 0 & 1 & \cdots & r_{3,2n-1} & r_{3,2n} \\
         0 & 0 & -1 & 0 & \cdots & r_{4,2n-1} & r_{4,2n} \\
         \vdots & \vdots & \vdots & \vdots & \ddots & \vdots & \vdots \\
         0 & 0 & 0 & 0 & \cdots & 0 & 1 \\
         0 & 0 & 0 & 0 & \cdots & -1 & 0 \\
       \end{bmatrix}
  \end{equation}
  with the nonzero entries of $T$ and $R$ being
  \begin{equation}\label{eq.pfd3}
    t_{\ell} = \frac{\pf(M_{[2\ell]})}{\pf(M_{[2\ell-2]})}, \quad r_{i,j} = \frac{\pf(M_{[2d-2] \cup \{i,j\}})}{\pf(M_{[2d]})},
  \end{equation}
  where $1 \leq \ell \leq n$, $1 \leq i < j \leq 2n$ and $d= \floor{\frac{i+1}{2}}$. In particular, $r_{i,i+1}=1$ if $i$ is odd.
\end{theorem}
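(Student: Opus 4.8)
The plan is to reduce this skew-symmetric congruence to the familiar block $LDU$ (equivalently $LDL^{\intercal}$) decomposition and then read off the factors as ratios of Pfaffians. Throughout I group the indices $\{1,\dots,2n\}$ into consecutive pairs $\{2k-1,2k\}$ and view $M$, $T$, $R$ as $n\times n$ arrays of $2\times 2$ blocks; write $J=\begin{bmatrix}0&1\\-1&0\end{bmatrix}$, so the diagonal blocks of $T$ are $t_\ell J$ and those of $R$ are $J$.

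\textbf{Existence.} First I would establish existence by induction on $n$ via block Gaussian elimination. The leading block $M_{[2]}=m_{12}J=t_1J$ is invertible because $t_1=\pf(M_{[2]})\neq 0$. Writing $M=\begin{bmatrix}t_1J & B\\ -B^{\intercal} & D\end{bmatrix}$, a symmetric block pivot gives $M=L^{\intercal}\operatorname{diag}(t_1J,\tilde M)L$, where $L=\begin{bmatrix} I & (t_1J)^{-1}B\\ 0 & I\end{bmatrix}$ and the Schur complement $\tilde M=D+B^{\intercal}(t_1J)^{-1}B$ is again skew-symmetric of size $2n-2$, since $(t_1J)^{-1}$ is skew-symmetric. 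Because the pivot factor is block-triangular, leading Pfaffians factor as $\pf(M_{[2i]})=t_1\,\pf(\tilde M_{[2i-2]})$, so the hypothesis $\pf(M_{[2i]})\neq 0$ descends to $\tilde M$ and the induction proceeds. Collecting the pivots $t_\ell$ into $T$ and folding the unitriangular factors together with the normalization $t_1J=J^{\intercal}(t_1J)J$ of each pivot block into $R$ yields a decomposition $M=R^{\intercal}TR$ of exactly the stated shape.

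\textbf{The pivots $t_\ell$.} Given any such decomposition, the $t_\ell$ formula is immediate. Since $R$ is block upper triangular, $R_{[2\ell]}$ is again of the same type and $M_{[2\ell]}=R_{[2\ell]}^{\intercal}T_{[2\ell]}R_{[2\ell]}$. Using $\pf(C^{\intercal}SC)=\det(C)\pf(S)$ with $\det(R_{[2\ell]})=(\det J)^{\ell}=1$ and $\pf(T_{[2\ell]})=t_1\cdots t_\ell$ gives $\pf(M_{[2\ell]})=t_1\cdots t_\ell$; dividing consecutive instances yields $t_\ell=\pf(M_{[2\ell]})/\pf(M_{[2\ell-2]})$ with $\pf(M_{[0]})=1$. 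I note that once every entry of $T$ and $R$ is expressed through Pfaffians of submatrices of $M$, uniqueness is automatic, so it remains only to pin down the $r_{i,j}$.

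\textbf{The multipliers $r_{i,j}$ (the crux).} This is the step I expect to be the main obstacle, because $[2d-2]\cup\{i,j\}$ is not a leading principal index set once $j>2d$. My plan is to apply a minor-summation (Cauchy--Binet type) identity for Pfaffians to $M_S=(R_{\bullet,S})^{\intercal}\,T\,(R_{\bullet,S})$, where $S=[2d-2]\cup\{i,j\}$ and $R_{\bullet,S}$ is the $2n\times 2d$ submatrix of columns indexed by $S$; this expands $\pf(M_S)=\sum_{W}\det(R_{W,S})\pf(T_W)$ over $2d$-subsets $W$. Since $T$ is block diagonal, $\pf(T_W)\neq 0$ only when $W$ is a union of full blocks $\{2k-1,2k\}$, giving $\prod_{k}t_k$, and the block-upper-triangularity of $R$ then kills every term except $W=[2d]$: any $W$ using a block $k>d$ supplies two rows $2k-1,2k$ whose only possibly nonzero column in $R_{W,S}$ is $j$, forcing $\det(R_{W,S})=0$. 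Hence $\pf(M_S)=\det(R_{[2d],S})\,\pf(M_{[2d]})$, and a further block-triangular reduction (rows $2d-1,2d$ vanish in columns $[2d-2]$) reduces $\det(R_{[2d],S})$ to the $2\times 2$ determinant of rows $\{2d-1,2d\}$ against columns $\{i,j\}$. Evaluating this determinant with the normalized block $J$ leaves exactly $r_{i,j}$ in either case $i=2d-1$ or $i=2d$, giving $r_{i,j}=\pf(M_{[2d-2]\cup\{i,j\}})/\pf(M_{[2d]})$; the case $j=i+1$ with $i$ odd collapses $S$ to $[2d]$ and returns $r_{i,i+1}=1$, completing the proof.
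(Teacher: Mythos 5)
The paper contains no proof of this theorem to compare you against: it is imported verbatim from Ishikawa--Tagawa--Zeng \cite[Theorem 2.2]{ITZ13}, and the appendix only \emph{uses} the decomposition. Judged on its own merits, your argument is correct and essentially complete. The existence step is sound: $(t_1J)^{-1}=-t_1^{-1}J$ is skew, so the Schur complement $D+B^{\intercal}(t_1J)^{-1}B$ is again skew-symmetric; the identity $\pf(M_{[2i]})=t_1\,\pf(\tilde M_{[2i-2]})$ (valid because the columns of the unitriangular factor indexed by $[2i]$ are supported in rows $[2i]$, so the congruence restricts to leading principal submatrices) propagates the nonvanishing hypothesis down the induction; and the twist $J^{\intercal}(t_\ell J)J=t_\ell J$ correctly converts the block-unitriangular factor $L$ into $R=\mathrm{diag}(J,\dotsc,J)L$ of the stated shape. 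The pivot formula follows from $\pf(C^{\intercal}SC)=\det(C)\pf(S)$ with $\det(R_{[2\ell]})=(\det J)^{\ell}=1$. Your handling of the crux is also right: in the expansion $\pf\bigl(R_{\bullet,S}^{\intercal}TR_{\bullet,S}\bigr)=\sum_{W}\det(R_{W,S})\pf(T_W)$, block-diagonality of $T$ forces $W$ to be a union of full blocks, and for any block $k>d$ the two rows $2k-1,2k$ of $R_{W,S}$ are supported in the single column $j$, so the sum collapses to $W=[2d]$; the block-triangular reduction and the $2\times 2$ evaluation against the diagonal block $J$ give $r_{i,j}$ in both parities of $i$, including $r_{i,i+1}=1$ for $i$ odd.

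Two points to make explicit in a polished write-up. First, the Pfaffian Cauchy--Binet (minor summation) identity you invoke is itself a nontrivial theorem and should be cited (it is due to Ishikawa and Wakayama) or proved; it is the one external ingredient your proof genuinely leans on. Second, your remark that uniqueness is ``automatic'' is legitimate but deserves a sentence: the derivations of $t_\ell$ and $r_{i,j}$ in your second and third steps use only the \emph{shape} of $T$ and $R$ together with $\pf(M_{[2d]})=t_1\cdots t_d\neq 0$ (needed to divide), so they apply to an arbitrary decomposition of the stated form and hence pin all entries to the same Pfaffian ratios. With those two clarifications, your proof stands as a self-contained alternative to citing \cite{ITZ13}.
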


Normally, it is very difficult to obtain a formula for the Pfaffian of a matrix from its decomposition unless there is a nice expression for the $t_{\ell}$'s and $r_{i,j}$'s. Our goal is to find a way to express the $t_{\ell}$'s and $r_{i,j}$'s from the Pfaffian decomposition of our matrix $A_{[2n]}(k,t)$. By the explicit expressions \eqref{eq.pfd3}, we are able to calculate $\pf(A_I(k,t))$ for sets $I = [2d-2] \cup \{i,j\}$, where $1 \leq i < j \leq 2n$ and $d= \floor{\frac{i+1}{2}}$.

Now, suppose we have matrices $T$ and $R$ of the form \eqref{eq.pfd2}. Our idea is to make the entries of the matrix $R^{\intercal}TR$ (viewing the $t_{\ell}$'s and $r_{i,j}$'s as variables) agree with the entries of the generalized matrix $A(k,t)$. Thanks to the recurrence relation \eqref{eq.kt-matrix} satisfied by the entries of $A(k,t)$, the entries of $R^{\intercal}TR$ must satisfy the same recurrence relation. As a consequence, one can express the $t_{\ell}$'s and $r_{i,j}$'s recursively. An illustration of this process for small $n$ will be presented in Section \ref{sec.pfeva}.

\subsection{Recursive calculation of $\pf(A_I(k,t))$}\label{sec.pfeva}

The key idea is to use that the entries of the matrix $R^{\intercal}TR$ (which we want to equal $A(k,t)$) satisfy \eqref{eq.kt-matrix}. In order to illustrate how to calculate Pfaffians recursively, we give the first few entries of the upper triangular part of the matrix $R^{\intercal}TR$ in \eqref{eq.VTV} and proceed by the following steps.
\begin{footnotesize}
\begin{equation}\label{eq.VTV}
    \begin{bmatrix}
      0 & t_1 & t_1r_{1,3} & t_1r_{1,4} & t_1r_{1,5} & t_1r_{1,6} & \cdots \\
      *  & 0   & t_1r_{2,3} & t_1r_{2,4} & t_1r_{2,5} & t_1r_{2,6}  & \cdots\\
      *  &  *   &     0      & t_2-t_1r_{1,4}r_{2,3}+t_1r_{1,3}r_{2,4} & t_2r_{3,5}-t_1r_{1,5}r_{2,3}+t_1r_{1,3}r_{2,5}  & t_2r_{3,6}-t_1r_{1,6}r_{2,3}+t_1r_{1,3}r_{2,6} & \cdots \\
      * &   *  &      *      & 0 & t_2r_{4,5}-t_1r_{1,5}r_{2,4}+t_1r_{1,4}r_{2,5} & t_2r_{4,6}-t_1r_{1,6}r_{2,4}+t_1r_{1,4}r_{2,6} & \cdots \\
      *  &  *   &      *      & * & 0 & \cdots & \cdots
    \end{bmatrix}
\end{equation}
\end{footnotesize}
\begin{enumerate}[itemsep=0pt,topsep=0pt,parsep=0pt]
  \item[Step 1:] From the $(1,2)$-entry, we obtain
                  \begin{equation*}
                    t_1 = t.
                  \end{equation*}
  \item[Step 2:] From the first row, we have $t_1r_{1,j} = t$ and thus
                  \begin{equation*}
                    r_{1,j} = 1, \quad j \geq 2.
                  \end{equation*}
  \item[Step 3:] From the $(2,3)$-entry, we have $t_1r_{2,3} = t_1 + t_1r_{1,3} - k$ and thus
                  \begin{equation*}
                    r_{2,3} = \frac{-k+2t}{t}.
                  \end{equation*}
  \item[Step 4:] From the first and second rows, we have $t_1r_{2,j} = t_1r_{2,j-1} + t_1r_{1,j} + t_1r_{1,j-1}$ for $j>3$ with the initial value $r_{2,3}$ given in the previous step. Therefore,
                  \begin{equation*}
                    r_{2,j} = \frac{-k + (-4+2j)t}{t}, \quad j \geq 3.
                  \end{equation*}
  \item[Step 5:] From the $(3,4)$-entry, we have $t_2 = t_1r_{2,3} + t_1r_{2,4} + k + t_1r_{1,4}r_{2,3} - t_1r_{1,3}r_{2,4}$. After simplifying, we have
                  \begin{equation*}
                    t_2 = -k+4t.
                  \end{equation*}
  \item[Step 6:] From the second and third rows, after simplifying, we obtain $\displaystyle r_{3,j} = r_{3,j-1} + \frac{2t_1r_{2,j-1}}{t_2}$ for $j>4$ with the initial value $r_{3,4}=1$. Therefore,
                  \begin{equation*}
                    r_{3,j} = \frac{(7-2j)k+(12-10j+2j^2)t}{-k+4t}, \quad j \geq 4.
                  \end{equation*}
  \item[Step 7:] From the $(4,5)$-entry, we have $t_2r_{4,5} = (t_2-t_1r_{1,4}r_{2,3}+t_1r_{1,3}r_{2,4}) + (t_2r_{3,5}-t_1r_{1,5}r_{2,3}+t_1r_{1,3}r_{2,5}) - k + t_1r_{1,5}r_{2,4} - t_1r_{1,4}r_{2,5}$. After simplifying, we obtain
                  \begin{equation*}
                    r_{4,5} = 5.
                  \end{equation*}
  \item[Step 8:] From the third and fourth rows, after simplifying, we obtain
                  \begin{equation*}
                    r_{4,j} = r_{4,j-1} + \frac{-2t_1r_{2,3} + 2t_1r_{2,j-1} + t_2r_{3,j-1} + t_2r_{3,j}}{t_2}, \quad j > 5,
                  \end{equation*}
  with the initial value $r_{4,5}$ given in the previous step. Therefore,
                  \begin{equation*}
                    r_{4,j} = \frac{(75-42j+6j^2)k + (-32j+24j^2-4j^3)t}{3(-k+4t)}, \quad j \geq 5.
                  \end{equation*}
  \item[Step 9:] We can find $t_3$ from the $(5,6)$-entry, and so on.
\end{enumerate}

In general, using the recurrence relation \eqref{eq.kt-matrix}, the $(2i-1,2i)$-entry of \eqref{eq.VTV} gives the formula for $t_i$ while the $(2i,2i+1)$-entry gives the formula for $r_{2i,2i+1}$. The relation between rows $i-1$ and $i$ gives a first order non-homogeneous recurrence relation $r_{i,j} = r_{i,j-1} + \text{some lower terms}$ for $j>i+1$, with the initial value $r_{i,i+1} = 1$ if $i$ is odd, and $r_{i,i+1}$ given in the previous step if $i$ is even.

By two equations in \eqref{eq.pfd3}, one obtains
\begin{equation}\label{eq.pfA}
  \pf(A_{[2n]}(k,t)) = \prod_{\ell=1}^{n}t_{\ell},
\end{equation}
and if $I = [2d-2] \cup \{i,j\}$, where $1 \leq i < j \leq 2n$ and $d= \floor{\frac{i+1}{2}}$, then we have
\begin{equation}\label{eq.pfAI}
 \pf(A_{I}(k,t)) = \pf(A_{[2d]}(k,t))r_{i,j}.
\end{equation}

The formulas for the first eight terms of $t_{\ell}$ are listed below. Equation \eqref{eq.pfA} and these formulas verify Conjecture \ref{conjext} for $n \leq 8$.
\begin{align}
 t_1 & = t. \nonumber \\
 t_2 & = -k+4t. \nonumber \\
 t_3 & = -3k+16t. \nonumber \\
 t_4 & = \frac{13k^2-120kt+256t^2}{-k+4t}. \nonumber \\
 t_5 & = \frac{149k^2-1584kt+4096t^2}{-3k+16t}. \nonumber \\
 t_6 & = \frac{-2661k^3+38540k^2t-178688kt^2+262144t^3}{13k^2-120kt+256t^2}. \nonumber \\
 t_7 & = \frac{-119335k^3+1899616k^2t-9887744kt^2+16777216t^3}{149k^2-1584kt+4096t^2}. \nonumber \\
 t_8 & = \frac{8669753k^4-171171824k^3t+1234228224k^2t^2-3832545280kt^3+4294967296t^4}{-2661k^3+38540k^2t-178688kt^2+262144t^3}. \label{eq.ts}
\end{align}

\begin{remark}
  Following the above process, one can write down the general forms of $t_{\ell}$ and $r_{i,j}$ recursively. For example, when $\ell \geq 3$ we have
  \begin{equation}\label{eq.tl}
    t_{\ell} = f_1(2\ell-1) + \sum_{i=2}^{\ell-2}f_2(i,2\ell) + f_3(\ell-1,2\ell),
  \end{equation}
  where the functions $f_1,f_2$ and $f_3$ are defined as follows.
  \begin{align*}
    f_1(y)      & = 2t_1(r_{2,y}-r_{2,y-2}). \\
    f_2(x,y)    & = t_{x}\big( (r_{2x-1,y}r_{2x,y-1}-r_{2x-1,y-1}r_{2x,y}) +(r_{2x-1,y-2}r_{2x,y}-r_{2x-1,y}r_{2x,y-2}) \\
                & + (r_{2x-1,y-3}r_{2x,y-1}-r_{2x-1,y-1}r_{2x,y-3}) + (r_{2x-1,y-3}r_{2x,y-2}-r_{2x-1,y-2}r_{2x,y-3}) \big).\\
    f_3(x,y)    & = t_{x} \big( (r_{2x-1,y}r_{2x,y-1}-r_{2x-1,y-1}r_{2x,y}) +r_{2x-1,y-2} + r_{2x-1,y-1} + r_{2x,y} \big).
  \end{align*}

  However, the general forms that we obtained in this way are very complicated, and they do not seem to be helpful for proving Conjecture \ref{conjext}. 
\end{remark}

\end{appendices}

\subsection*{Acknowledgements}

The author thanks Mihai Ciucu for stimulating discussions and helpful suggestions on the preliminary version of this paper. The author also thanks the reviewers for carefully reading the manuscript and providing helpful comments.




\end{document}